\renewcommand\theequation{\thesection.\arabic{equation}}
\newcommand{\BC}{{\mathbb {C}}}
\newcommand{\BN}{{\mathbb {N}}}
\newcommand{\BR}{{\mathbb {R}}}
\newcommand{\BZ}{{\mathbb {Z}}}
\newcommand{\CC}{{\mathcal {C}}}
\newcommand{\CL}{{\mathcal {L}}}
\newcommand{\CN}{{\mathcal {N}}}
\newcommand{\CO}{{\mathcal {O}}}
\newcommand{\CR}{{\mathcal {R}}}
\newcommand{\CS}{{\mathcal {S}}}
\newcommand{\CT}{{\mathcal {T}}}
\newcommand{\CX}{{\mathcal {X}}}
\newcommand{\Fa}{{\mathfrak {a}}}
\newcommand{\Fb}{{\mathfrak {b}}}
\newcommand{\Fg}{{\mathfrak {g}}}
\newcommand{\Fh}{{\mathfrak {h}}}
\newcommand{\Fl}{{\mathfrak {l}}}
\newcommand{\Fm}{{\mathfrak {m}}}
\newcommand{\Fn}{{\mathfrak {n}}}
\newcommand{\Fo}{{\mathfrak {o}}}
\newcommand{\Fs}{{\mathfrak {s}}}
\newcommand{\Ft}{{\mathfrak {t}}}
\newcommand{\Fz}{{\mathfrak {z}}}
\newcommand{\GL}{{\mathrm{GL}}}
\newcommand{\Hom}{{\mathrm{Hom}}}
\newcommand{\SL}{{\mathrm{SL}}}
\newcommand{\SO}{{\mathrm{SO}}}
\newcommand{\Sp}{{\mathrm{Sp}}}
\newcommand{\tr}{{\mathrm{tr}}}
\newcommand{\ul}{\underline}
\newcommand{\back}{\backslash}
\newtheorem{thm}{Theorem}[section]
\newtheorem{cor}[thm]{Corollary}
\newtheorem{lem}[thm]{Lemma}
\newtheorem{prop}[thm]{Proposition}
\newtheorem {conj}[thm]{Conjecture}
\newtheorem {ques/conj}[thm]{Question/Conjecture}
\newtheorem{defn}[thm]{Definition}
\newtheorem{rmk}[thm]{Remark}
\newcommand{\Rmnum}[1]{\expandafter\@slowromancap\romannumeral #1@}
\begin{document}
\renewcommand{\theequation}{\arabic{equation}}
\numberwithin{equation}{section}

\title[Multiplicity formula]{On Multiplicity Formula for Spherical Varieties}

\author{Chen Wan}
\address{Department of Mathematics\\
Massachusetts Institute of Technology\\
Cambridge, MA 02139, USA}
\email{chenwan@mit.edu}

\begin{abstract}
In this paper, we propose a conjectural multiplicity formula for general spherical varieties. For all the cases where a multiplicity formula has been proved, including Whittaker model, Gan-Gross-Prasad model, Ginzburg-Rallis model, Galois model and Shalika model, we show that the multiplicity formula in our conjecture matches the multiplicity formula that has been proved. We also give a proof of this multiplicity formula in two new cases.
\end{abstract}

\subjclass[2010]{Primary 22E50}

\keywords{Multiplicity of Spherical Varieties, Representation of Reductive Group over local field}

\maketitle

\tableofcontents

\section{Introduction}
Let $F$ be a local field of characteristic 0, $G$ be a connected reductive group defined over $F$, $H$ be a connected closed subgroup of $G$, and $\chi$ be a unitary character of $H(F)$. Assume that $H$ is a spherical subgroup of $G$ (i.e. $H$ admitting an open orbit in the flag variety of
$G$). For every irreducible smooth representation $\pi$ of $G(F)$, we define the multiplicity
$$m(\pi,\chi):=\dim(\Hom_{H(F)}(\pi,\chi)).$$
One of the fundamental problems in the {\it Relative Langlands Program} is to study the multiplicity $m(\pi,\chi)$. In general, one expects the multiplicity to be finite and to detect some functorial structures of $\pi$. We refer the readers to \cite{SV} for a detailed discussion of these kinds of problems.

In his pioneering works \cite{W10} and \cite{W12}, Waldspurger developed a new method to study the multiplicities. His idea is to prove a local trace formula $I_{geom}(f)=I(f)=I_{spec}(f)$ for the model $(G,H)$, which would imply a multiplicity formula $m(\pi,\chi)=m_{geom}(\pi,\chi)$. Here $m_{geom}(\pi,\chi)$ is defined via the Harish-Chandra character $\theta_{\pi}$ of $\pi$ and is called the geometric multiplicity. In his paper \cite{W10} and \cite{W12}, Waldspurger applied this method to the orthogonal Gan-Gross-Prasad models over p-adic field. By proving the trace formula and the multiplicity formula, he was able to show that for the orthogonal Gan-Gross-Prasad model, the summation of the multiplicities is always equal to 1 for all tempered local Vogan L-packets. Later his idea was adapted by Beuzart-Plessis \cite{B12}, \cite{B15} for the unitary Gan-Gross-Prasad model, and by the author \cite{Wan15}, \cite{Wan16} for the Ginzburg-Rallis model. Subsequently, in \cite{B18}, Beuzart-Plessis applied this method to the Galois model; in a joint work with Beuzart-Plessis \cite{BW}, we applied this method to the Shalika model; and in a joint work with Zhang \cite{WZ}, we applied this method to the unitary Ginzburg-Rallis model.

For all the cases above, the most crucial step in the proof is to prove the local trace formula $I_{geom}(f)=I(f)=I_{spec}(f)$. However, the proofs of these trace formulas, especially the geometric side (i.e. $I(f)=I_{geom}(f)$), have each time been done in some ad hoc way pertaining to the particular features of the case at hand. It makes now little doubt that the local trace formula and multiplicity formula should exist in some generality. However, until this moment, it is not clear (even conjecturally) what would both formulas look like for general spherical varieties. The reason is that although we can easily give a uniform definition of the multiplicity $m(\pi,\chi)$, the distribution $I(f)$ and the spectral expansion $I_{spec}(f)$ for all the spherical varieties, the geometric multiplicity $m_{geom}(\pi,\chi)$ and the geometric expansion $I_{geom}(f)$ are more mysterious. There are no uniform definitions of these two objects for general spherical varieties.

\begin{rmk}
\emph{The definitions of $m_{geom}(\pi,\chi)$ and $I_{geom}(f)$ are very similar to each other. So one only needs to define $m_{geom}(\pi,\chi)$ for general spherical varieties, which will lead to the definition of $I_{geom}(f)$.}
\end{rmk}

In this paper, we propose a uniform definition of $m_{geom}(\pi,\chi)$ (and hence $I_{geom}(f)$) for general spherical varieties. To justify our definitions, we show that for all the cases where the multiplicity formulas have been proved, including the Whittaker model, the Gan-Gross-Prasad model, the Ginzburg-Rallis model, the Galois model, and the Shalika model, our definition of the geometric multiplicity matches the one in the known multiplicity formula. We will also give a proof of the multiplicity formula for two new cases. We hope our definitions will give people a better understanding of the multiplicity formula and local trace formula, and shed some light on a potential proof of both formulas for general spherical varieties.

\subsection{Main results}
Let $F,G,H,\chi,m(\pi,\chi)$ be as above. Our goal is to define the geometric multiplicity $m_{geom}(\pi,\chi)$. Before we explain our definition, let's first consider the baby case when $G$ is a finite group. In this case, let $\theta_{\pi}(g)=\tr(\pi(g))$ be the character of $\pi$. By the representation theory of finite group, we know that $m(\pi,\chi)=m_{geom}(\pi,\chi)$ where
\begin{equation}\label{finite group case}
m_{geom}(\pi,\chi):=\frac{1}{|H|}\sum_{h\in H} \theta_{\pi}(h) \chi^{-1}(h)=\sum_{x} \frac{1}{|Z_H(x)|} \theta_{\pi}(h)\chi^{-1}(h).
\end{equation}
Here the second summation is over a set of representatives of conjugacy classes of $H$ and $Z_H(x)$ is the centralizer of $x$ in $H$.

Guided by the finite group case and all the known cases, it is natural to expect that for general spherical pair $(G,H)$, $m_{geom}(\pi,\chi)$ should be an integral over certain semisimple conjugacy classes of $H(F)$ of the Harish-Chandra character $\theta_{\pi}$. However, compared with the finite group case, there are three difficulties in the definition of $m_{geom}(\pi,\chi)$ for spherical varieties over local field.

First, unlike the finite group case, the Harish-Chandra character $\theta_{\pi}$ is only defined on the set of regular semisimple elements of $G(F)$. On the other hand, many semisimple conjugacy classes of $H(F)$ are not regular in $G(F)$ which means that $\theta_{\pi}$ is not defined in those conjugacy classes. In order to solve this issue, we need to use the germ expansion for $\theta_{\pi}$. Roughly speaking, near every semisimple element (not necessarily regular) of $G(F)$, $\theta_{\pi}$ can be written as a linear combination of the Fourier transform of the nilpotent orbital integrals. The coefficients associated to regular nilpotent orbits in this linear combination are called the regular germs of $\theta_{\pi}$ (see Section \ref{section shalika germ} for details). In order to define $\theta_{\pi}$ at non-regular semisimple conjugacy classes, we need to use the regular germs of $\theta_{\pi}$. This creates the first difficulty: in general when $F\neq \BC$, we may have more than one $F$-rational regular nilpotent orbits. Hence for each spherical pair $(G,H)$, we need to define a subset of regular nilpotent orbits whose regular germs appear in the geometric multiplicity. This will be done in Section \ref{Section nilpotent orbit} by using the conjugacy classes in the tangent space of $G/H$.

Secondly, we need to define the support (i.e. a subset of semisimple conjugacy classes of $H(F)$) of the geometric multiplicity. In the finite group case, the support of geometric multiplicity contains all the conjugacy classes of $H$. But this will not be the case for spherical varieties over local field. As we will see in Section \ref{Section torus}, the geometric multiplicity is only supported on those ``elliptic conjugacy classes" whose centralizers in $G(F)$ and $H(F)$ form a {\it minimal spherical variety} (see Section \ref{section spherical subgroup}) and whose centralizer in $G(F)$ is quasi-split. The quasi-split condition provides the existence of the regular germs, while the minimal spherical variety condition ensures that the ``homogeneous degree" of the spherical variety is equal to the homogeneous degree of the regular germs of the Harish-Chandra character. We refer the readers to Section \ref{Section torus} for details.

Thirdly, in the finite group case, we normalize the character $\theta_{\pi}$ by the number $\frac{1}{|Z_H(x)|}$. For general spherical varieties, we would need an extra number $d(G,H,F)$ which characterizes how the $G(\bar{F})$-conjugacy class (i.e. stable conjugacy class) in the tangent space of $G/H$ decomposes into $H(F)$-conjugacy classes. We refer the readers to Section \ref{Section constant d(G,H,F)} for details.

After we have solved the three difficulties above, we are able to write down the definition of $m_{geom}(\pi,\chi)$ (and hence $I_{geom}(f)$) for all spherical varieties in Section \ref{Section multiplicity formula}. We will state the conjectural multiplicity formula in Conjecture \ref{main conjecture}. In Section \ref{Section known cases}, we will show that for all the known cases, our definition of the geometric multiplicity $m_{geom}(\pi,\chi)$ matches the one in the known multiplicity formula.

\begin{thm}\label{main}
Assume that $F$ is p-adic. When $(G,H)$ is the Whittaker model, Gan-Gross-Prasad model, Ginzburg-Rallis model, Galois model, or Shalika model, the geometric multiplicity defined in Definition \ref{definition geometric multiplicity} matches the one in the multiplicity formula that has been proved. In particular, Conjecture \ref{main conjecture} holds for all these models.
\end{thm}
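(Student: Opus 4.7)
The plan is to prove Theorem \ref{main} by a case-by-case verification, matching the three ingredients of the general definition in Definition \ref{definition geometric multiplicity} --- the support among semisimple conjugacy classes of $H(F)$, the distinguished set of regular nilpotent orbits whose germs enter the formula, and the normalizing constant $d(G,H,F)$ --- against the corresponding ingredients in each of the five known multiplicity formulas. For each model $(G,H)$ I would first identify, from the literature cited (\cite{W10}, \cite{W12} for Gan-Gross-Prasad; \cite{Wan15}, \cite{Wan16} for Ginzburg-Rallis; \cite{B18} for Galois; \cite{BW} for Shalika; and the standard Harish-Chandra/Rodier germ formula for Whittaker), the exact form of $m_{geom}(\pi,\chi)$ that has been established, so that the target is completely explicit.

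The main verification decomposes into three matching steps, which I would carry out independently for each of the five cases. First, I would check that the set of semisimple conjugacy classes over which the known formula integrates coincides with the ``elliptic, $G$-quasi-split, centralizer-minimal-spherical'' locus singled out in Section \ref{Section torus}. In practice: for Whittaker the only contribution is $x=1$ (here $H$ is unipotent, all its semisimple elements are trivial), which matches Rodier's germ formula; for Gan-Gross-Prasad and its variants, the elliptic tori arising in Waldspurger's and Beuzart-Plessis' formulas are precisely the ones whose joint centralizer in $G$ and $H$ again form a Gan-Gross-Prasad pair of lower rank, and these are exactly minimal spherical varieties in the sense of Section \ref{section spherical subgroup}; for Ginzburg-Rallis and Shalika the elliptic condition on the $\GL$-torus and the quasi-split condition on its centralizer likewise cut out precisely the torus in the known formulas; for the Galois model one recovers the stable tori of the smaller group. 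Second, I would verify the nilpotent-orbit ingredient: for Whittaker the single relevant regular orbit is the one defining the character, for Gan-Gross-Prasad and Ginzburg-Rallis there is a unique distinguished regular nilpotent orbit on the slice, and for the Galois/Shalika models the set of orbits parametrized via the tangent space of $G/H$ in Section \ref{Section nilpotent orbit} matches the orbits indexing the Shalika germs in the known formulas. Third, I would match the constants: compute $d(G,H,F)$ from Section \ref{Section constant d(G,H,F)} and check that, after accounting for the Weyl-group/measure normalizations used in each source, it reproduces the coefficient appearing in front of the germ integrals.

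The hard part, and the bulk of the work, will be the second and third matching steps for the Gan-Gross-Prasad and Ginzburg-Rallis models. In these cases the known formulas were originally derived from ad hoc parabolic descent, and the regular nilpotent orbit data is encoded implicitly through the specific construction of the test vectors; rewriting them intrinsically in terms of the tangent space of $G/H$ and of the stable-versus-rational orbit decomposition measured by $d(G,H,F)$ requires a careful identification of torsors. Concretely, I expect to need a Galois cohomology computation showing that the $H(F)$-orbits inside a stable regular nilpotent orbit on $\Fg/\Fh$ are parametrized by the same pointed set that controls the pure inner forms appearing in the existing formulas; once this bijection is set up, the numerical factor $d(G,H,F)$ falls out by comparing cardinalities. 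The Whittaker, Galois, and Shalika cases should be substantially shorter, as the known formulas are already stated essentially in the geometric language used in Section \ref{Section multiplicity formula}.

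Finally, having matched $m_{geom}(\pi,\chi)$ term by term with the known formula in each of the five cases, Conjecture \ref{main conjecture} for these models follows immediately from the corresponding theorems in \cite{W10,W12,B12,B15,Wan15,Wan16,B18,BW}, which completes the proof of Theorem \ref{main}.
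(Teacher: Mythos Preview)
Your overall architecture is correct and matches the paper's: a case-by-case verification that the three ingredients of Definition~\ref{definition geometric multiplicity} --- the support $\CT(G,H)$, the nilpotent-orbit set $\CN(G,H,\xi)$, and the constants $d$ and $c$ --- agree with what appears in each known multiplicity formula. Your description of the support verification is essentially right, and the Whittaker, Galois, and Shalika cases are indeed short once $\CT(G,H)$ is pinned down.

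The genuine gap is in your second matching step, the identification of $\CN(G,H,\xi)$. Recall from Section~\ref{Section nilpotent orbit} that $\CO \in \CN(G,H,\xi)$ is a statement about Kostant sections: almost every null regular semisimple conjugacy class associated to $\CO$ must meet the slice $\Xi + \Fh_0^\perp + \Fn$. Your proposed ``Galois cohomology computation on $H(F)$-orbits inside a stable regular nilpotent orbit on $\Fg/\Fh$'' is aimed at the wrong object: it may help count rational orbits or recover $d(G,H,F)$, but it does not decide which Kostant sections meet the slice. The paper uses two specific tools you do not invoke. First, Kottwitz's result (Lemma~\ref{Shalika germ}) that $\Gamma_\CO(X)=1$ iff $X$ is associated to $\CO$; this converts ``associated to $\CO$'' into a statement about Shalika germs and is what drives the exclusion of the wrong orbits (already in the Whittaker case one needs a nontrivial lemma, using Shelstad and Vign\'eras, to show that distinct regular nilpotent orbits are distinguished by some $\Gamma_\CO$). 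Second, to show that the \emph{correct} orbit actually lies in $\CN(G,H,\xi)$ for Gan--Gross--Prasad and Ginzburg--Rallis, the paper runs a contradiction argument against the already-proved Lie algebra local trace formula for the model in question: if the expected $\CO$ were missing, one could choose strongly cuspidal $f_G,f_H$ for which the trace identity would fail. Without feeding these trace formulas in as black boxes you have no mechanism to establish inclusion.

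There is one further wrinkle your outline does not anticipate. For the \emph{unitary} (as opposed to unitary similitude) Ginzburg--Rallis model one has $d(G_0,H_0,F)=2$, and no local trace formula is available to run the contradiction argument. The paper handles this case separately: it first settles the similitude case via the trace formula, and then deduces $\CN(G,H,\xi)=\{\CO_{reg,1},\CO_{reg,2}\}$ for the unitary case by comparing $G(F)$- and $G'(F)$-conjugacy of null elements. This maneuver is not covered by your plan.
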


Our proof of Theorem \ref{main} uses a Lie algebra version of the local trace formula for Gan-Gross-Prasad model and Ginzburg-Rallis model, as well as a relation between the Shalika germ and Kostant section proved by Kottwitz (see Lemma \ref{Shalika germ}). In general if one can extend Lemma \ref{Shalika germ} to the archimedean case, then we can also prove Theorem \ref{main} when $F=\BR$ (the case when $F=\BC$ is trivial).

\begin{rmk}
\emph{Unlike the finite group case, we don't expect the multiplicity formula $m(\pi,\chi)=m_{geom}(\pi,\chi)$ holds for all irreducible smooth representations of $G(F)$. An easy example will be the model $(\GL_2,\GL_1)$. For this case, the geometric multiplicity is just the regular germ of $\theta_{\pi}$ at the identity element and one can show that the multiplicity formula holds for all generic representations. However, it is easy to see that this formula fails for nongeneric representations (i.e. finite dimensional representations) of $\GL_2(F)$.}

\emph{In general, the multiplicity formula should always hold for all supercuspdial representations. When the spherical pair is tempered, it should hold for all discrete series and for almost all tempered representations. When the spherical pair is strongly tempered, it should hold for all tempered representations.}

\emph{Moreover, as observed by Prasad in \cite{P}, if we want to make the multiplicity formula holds for all irreducible smooth representations of $G(F)$, we need to replace the multiplicity $m(\pi,\chi)$ by the Euler-Poincar\'e pairing $EP(\pi,\chi)$. We refer the readers to Section \ref{Section multiplicity formula} for details.}
\end{rmk}

Moreover, all of our discussions so far make sense when $\chi$ is a finite dimensional representations of $H(F)$. In particular, we can also define the geometric multiplicity $m_{geom}(\pi,\chi)$ when $\chi$ is a finite dimensional representation of $H(F)$ (when $F$ is p-adic, this is not interesting since finite dimensional representations of $H(F)$ are essentially characters).

The case we are interested in is when $F=\BR$ and $H(\BR)=K$ is a maximal connected compact subgroup of $G(\BR)$. In this case, $m(\pi,\chi)=m_{geom}(\pi,\chi)$ gives a multiplicity formula of $K$-types for all the irreducible smooth representations of $G(\BR)$ (note that since $H(\BR)$ is compact, we have $m(\pi,\chi)=EP(\pi,\chi)$ for all $\pi$). We refer the readers to Section \ref{section K-types} for more details. In Section 8 and 9, I will prove this multiplicity formula of K-types for $\GL_n(\BR)$ and for all the complex reductive groups.

\begin{thm}\label{main K-type}
The multiplicity formula of $K$-types (i.e. Conjecture \ref{conj K-type}) holds when
\begin{enumerate}
\item $G(F)=\GL_n(\BR)$.
\item $G=Res_{\BC/\BR}H$ is a complex reductive group.
\end{enumerate}
In particular, Conjecture \ref{main conjecture} holds for these two cases.
\end{thm}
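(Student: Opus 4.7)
Since $K = H(\BR)$ is compact, the multiplicity $m(\pi,\chi) = \dim\Hom_K(\pi,\chi)$ equals $\int_K \theta_\pi(k)\,\overline{\theta_\chi(k)}\,dk$. Applying the Weyl integration formula on the compact connected group $K$ collapses this to
\[
m(\pi,\chi) = \frac{1}{|W(K,T_K)|}\int_{T_K}|\Delta_K(t)|^2\,\theta_\pi(t)\,\overline{\theta_\chi(t)}\,dt,
\]
where $T_K$ is a maximal torus of $K$ and $\Delta_K$ its Weyl denominator. My plan is to transform this integral into the geometric multiplicity of Definition \ref{definition geometric multiplicity} by expanding $\theta_\pi$ in regular germs at the non-regular elements of $T_K$, and then matching the resulting data (the selected regular nilpotent orbits and the constant $d(G,H,F)$) with the conjectural recipe.

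For the complex case $G = \Res_{\BC/\BR} H$, we have $G(\BR) = H(\BC)$, and $T_K$ embeds as the compact real form of a Cartan subgroup $T_K \cdot \exp(\sqrt{-1}\,\Lie(T_K))$ of $H(\BC)$; in particular, generic $t \in T_K$ is already regular in $G(\BR)$. I would insert Harish-Chandra's explicit character formula for complex groups into the integral, cancel the factor $|\Delta_K(t)|^2$ against the denominator of that character formula, and recognize the resulting Weyl-type sum as the geometric multiplicity. Since centralizers of semisimple elements in a complex group are connected, stable conjugacy coincides with rational conjugacy and $d(G,H,F) = 1$; moreover the regular nilpotent orbit in the tangent space of $G/H$ is unique over $\BC$, so the nilpotent support in the geometric formula is a singleton and the match is essentially forced by the explicit form of both sides.

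For $G(\BR) = \GL_n(\BR)$ with $K = \SO(n)$, the torus $T_K$ need not itself be a Cartan of $G(\BR)$ (for odd $n$ it is strictly smaller), and many of its elements are non-regular. My strategy is to stratify $T_K$ by the $G(\BR)$-conjugacy type of the centralizer and, on each stratum, to apply the Harish-Chandra--Howe germ expansion
\[
\theta_\pi(t\cdot\exp(X)) = \sum_\CO c_{\pi,t}(\CO)\,\widehat{j}_\CO(X),
\]
where $X$ ranges over small elements of the Lie algebra of $Z_{G(\BR)}(t)$. The regular-nilpotent germs $c_{\pi,t}(\CO_{\mathrm{reg}})$ survive the pairing against $|\Delta_K(t)|^2$, while contributions from lower nilpotent orbits should cancel after integration against $\overline{\theta_\chi}$ by a homogeneity and dimension count analogous to the ones used in Section \ref{Section known cases}. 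Lemma \ref{Shalika germ} is then invoked to identify the surviving regular nilpotent orbits with those prescribed by the tangent space of $G/H$, and a direct computation produces the constant $d(G,H,F)$.

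The main obstacle is the $\GL_n(\BR)$ case, specifically the uniform control of the germ expansion as $t$ varies across the non-regular locus of $T_K$ and the verification that contributions from non-regular nilpotent orbits drop out after integration. A secondary subtlety is computing $d(G,H,F)$ correctly, since the map from $G(\BR)$-stable classes in the tangent space of $G/H$ to $H(\BR)$-classes can be nontrivial: this reduces to a case analysis of Cartan subgroups of $\GL_n(\BR)$ and their intersection with $\SO(n)$. The complex case is expected to be considerably easier and will serve as a structural template for the $\GL_n(\BR)$ argument.
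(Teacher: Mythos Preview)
Your opening step is the heart of the problem, and it is not valid. You assert that compactness of $K$ alone gives
\[
m(\pi,\chi)=\int_K \theta_\pi(k)\,\overline{\theta_\chi(k)}\,dk,
\]
but this is only the classical orthogonality relation when $\pi$ is \emph{finite dimensional}. For infinite-dimensional $\pi$ the Harish-Chandra character $\theta_\pi$ is merely a locally integrable function on $G(\BR)$, and its restriction to the measure-zero subset $K$ need not be integrable at all. Already for $G(\BR)=\GL_2(\BR)$, $K=\SO_2(\BR)$ and $\pi$ a discrete series, one has $|\theta_\pi(r_\theta)|\asymp |\sin\theta|^{-1}$, which is not in $L^1(\SO_2(\BR))$; your integral diverges. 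In the complex case the integral does converge after Weyl integration (because $D^H(t)$ and $D^G(t)^{1/2}$ coincide on $T_K$), but then your displayed formula \emph{is} $m_{geom}(\pi,\omega)$ on the nose, so asserting it equals $m(\pi,\omega)$ is exactly the theorem you are trying to prove, not a starting point. Your subsequent plan for $\GL_n(\BR)$---stratify $T_K$, expand in germs, argue that non-regular germs cancel---would at best amount to regularizing a divergent integral; it does not explain why the regularized value is $m(\pi,\omega)$, and that is the entire content.

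The paper takes a different route that avoids analysis of singular integrals altogether. The key input is Proposition~\ref{GL(n) induction}: for $\GL_n(\BR)$ with $n>2$ and for non-abelian complex groups, $G_{ell}(\BR)=\emptyset$, hence the Grothendieck group $\CR(G)$ is spanned by parabolically induced representations. One then shows that both $m(\cdot,\omega)$ and $m_{geom}(\cdot,\omega)$ are compatible with parabolic induction: for $m$ this is Iwasawa decomposition plus Frobenius reciprocity, for $m_{geom}$ it is Proposition~\ref{germ parabolic induction} on the behaviour of $c_\pi$ under induction. The identity $m=m_{geom}$ then follows by induction on the semisimple rank, with the base cases ($\GL_1$, $\GL_2$, and abelian complex groups) handled directly. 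A minor further point: you propose to invoke Lemma~\ref{Shalika germ}, but that lemma is proved only for $p$-adic $F$ and the paper explicitly notes it is not known over $\BR$; in any case for $\GL_n(\BR)$ the set $Nil_{reg}(\Fg(\BR))$ is a singleton, so no selection is needed.
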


The key ingredient of our proof of Theorem \ref{main K-type} is to show that both the multiplicity and the geometric multiplicity behave nicely under parabolic induction. For the multiplicity, this follows from the Iwasawa decomposition and the reciprocity law. For the geometric multiplicity, this follows from Proposition \ref{germ parabolic induction} which gives the behavior of the Harish-Chandra character under parabolic induction. After we have proved these arguments, we can use induction to finish the proof of Theorem \ref{main K-type}. The upshot is that when $G=\GL_n$ ($n>2$) or when $G=Res_{\BC/\BR}H$ is a nonabelian complex reductive group, the Grothendieck group of finite length smooth representations of $G(\BR)$ is generated by induced representations.

The paper is organized as follows: In Section 2, we introduce basic notation and conventions used in this paper. In Section 3, we will define a subset of conjugacy classes of $H(F)$, which will be the support of the geometric multiplicity. In Section 4, we introduce a constant $d(G,H,F)$ associated to minimal spherical varieties. It characterizes how the $G(\bar{F})$-conjugacy class in the tangent space of $G/H$ decomposes into $H(F)$-conjugacy classes. In Section 5, we define a subset of regular nilpotent orbits associated to minimal spherical varieties. The regular germs of those nilpotent orbits will show up in the geometric multiplicity. Then in Section 6, combining the works in Section 3-5, we will define the geometric multiplicity $m_{geom}(\pi,\chi)$ and the geometric expansion of the trace formula $I_{geom}(f)$ for general spherical varieties. In Section 7, we will show that for all the known cases, our definition of the geometric multiplicity matches the one in the known multiplicity formula. Finally, in Section 8 and 9, we will prove the multiplicity formula of $K$-types for $\GL_n(\BR)$ and for all the complex reductive groups.

\subsection{Acknowledgement}
I would like to thank Rapha\"el Beuzart-Plessis for the helpful comments on the first draft of this paper, and for many helpful discussions which lead to the definition of the geometric multiplicity when the spherical variety has Type N root. I would also like to thank an anonymous referee for the helpful comments and corrections on a previous version of this paper.

\section{Preliminary}
\subsection{Notation}
Let $F$ be a local field of characteristic 0, and $\psi:F\rightarrow \BC^{\times}$ be a nontrivial additive character. Let $G$ be a connected reductive group defined over $F$, $\Fg$ be the Lie algebra of $G$, $Z_G$ be the center of $G$, and $A_G(F)$ be the maximal split torus of $Z_G(F)$.  We use $G_{ss}$, $G_{reg}$ (resp. $\Fg_{ss}$, $\Fg_{reg}$) to denote the set of semisimple and regular semisimple elements of $G$ (resp. $\Fg$). For $x\in G_{ss}$ (resp. $X\in \Fg_{ss}$), let $Z_G(x)$ (resp. $Z_G(X)=G_X$) be the centralizer of $x$ (resp. $X$) in $G$ and let $G_x$ be the neutral component of $Z_G(x)$. Similarly, for any abelian subgroup $T$ of $G$, let $Z_G(T)$ be the centralizer of $T$ in $G$ and let $G_T$ be the neutral component of $Z_G(T)$. We say $x\in G_{reg}(\BR)$ is elliptic if $G_x(\BR)$ is a maximal elliptic torus of $G(\BR)$ (i.e. $G_x(\BR)/Z_G(\BR)$ is compact). We use $G_{ell}(\BR)$ to denote the set of regular semisimple elliptic elements of $G(\BR)$. Finally, for $x\in G_{ss}(F)$ (resp. $X\in \Fg_{ss}(F)$), let $D^G(x)$ (resp. $D^G(X)$) be the Weyl determinant.

Fix a non-degenerate, symmetric, $G$-invariant bilinear form $<\;,\;>$ (i.e. the Killing form) on $\Fg$. For any complex valued Schwartz function $f$ on $\Fg(F)$, we define its Fourier transform $\hat{f}$ (which is also a Schwartz function on $\Fg(F)$) to be
$$\hat{f}(X)=\int_{\Fg(F)} f(Y) \psi(<X,Y>) dY$$
where $dY$ is the selfdual Haar measure on $\Fg(F)$ such that $\hat{\hat{f}}(X)=f(-X)$.

We say a subset $\Omega\subset G(F)$ (resp. $\omega\subset \Fg(F)$) is $G$-invariant if it is invariant under the $G(F)$-conjugation. For any subset $\Omega\subset G(F)$ (resp. $\omega\subset \Fg(F)$), we define the $G$-invariant subset
$$\Omega^G:=\{g^{-1}\gamma g\mid g\in G(F),\gamma\in \Omega\},\;\omega^G:=\{g^{-1}\gamma g\mid g\in G(F),\gamma\in \omega\}.$$
We say a $G$-invariant subset $\Omega$ of $G(F)$ (resp. $\omega$ of $\Fg(F)$) is compact modulo conjugation if there exist a compact subset $\Gamma$ of $G(F)$ (resp. $\Fg(F)$) such that $\Omega\subset \Gamma^G$ (resp. $\omega\subset \Gamma^G$). A $G$-domain on $G(F)$ (resp. $\Fg(F)$) is an open subset of $G(F)$ (resp. $\Fg(F)$) invariant under the $G(F)$-conjugation.

Finally, we fix a minimal Levi subgroup (resp. parabolic subgroup) $M_0(F)$ (resp. $P_0(F)=M_0(F)N_0(F)$) of $G(F)$. We say a parabolic subgroup of $G(F)$ is standard if it contains $P_0(F)$. We say a Levi subgroup of $G(F)$ is standard if it is a Levi subgroup of a standard parabolic subgroup and it contains $M_0(F)$. For two Levi subgroups $L_1(F)$ and $L_2(F)$ of $G(F)$, we say that $L_1(F)$ contains $L_2(F)$ up to conjugation if there exists $g\in G(F)$ such that $L_2(F)\subset gL_1(F)g^{-1}$.

\subsection{Useful function spaces}
We use $C_{c}^{\infty}(G(F))$ to denote the space of smooth compactly supported functions on $G(F)$, and we use $\CC(G(F))$ to denote the Harish-Chandra-Schwartz space of $G(F)$ (see Section 1.5 of \cite{B15} for details). On the Lie algebra level, let $C_{c}^{\infty}(\Fg(F))$ (resp. $\CS(\Fg(F))$) be the space of smooth compactly supported functions (resp. Schwartz functions) on $\Fg(F)$. When $F$ is p-adic, we have $C_{c}^{\infty}(\Fg(F))=\CS(\Fg(F))$.

Let $C_{c,scusp}^{\infty}(G(F))$ be the space of strongly cuspidal functions in $C_{c}^{\infty}(G(F))$. Similarly we can define the spaces $\CC_{scusp}(G(F)),\; C_{c,scusp}^{\infty}(\Fg(F)),\; \CS_{scusp}(\Fg(F))$. We refer the readers to Section 5 of \cite{B15} for the definition and basic properties of strongly cuspidal functions. We say a function $f\in \CC(G(F))$ is a cusp form if all the right translations of $f$ are also strongly cuspidal. We use ${}^{\circ}\CC(G(F))$ to denote the space of cusp forms on $G(F)$.

Finally, we can also define the above function spaces with central character. For a given unitary character $\chi$ of $Z_G(F)$, let $C_{c}^{\infty}(G(F),\chi)$ be the Mellin transform of the space $C_{c}^{\infty}(G(F))$ with respect to $\chi$. Similarly, we can also define the spaces $\CC(G(F),\chi),\;C_{c,scusp}^{\infty}(G(F),\chi),$ $\CC_{scusp}(G(F),\chi),\; {}^{\circ}\CC(G(F),\chi)$.

\subsection{Representations}
When $F$ is p-adic, we say a representation $\pi$ of $G(F)$ is smooth if for every $v\in \pi$, the function
$$f:\;G(F)\rightarrow \pi,\; f(g)=\pi(g)v$$
is locally constant. When $F$ is archimedean, we say a representation $\pi$ of $G(F)$ is irreducible smooth (resp. finite length smooth) if it is an irreducible (resp. finite length) Casselman-Wallach representation of $G(F)$. We say a finite length smooth representation $\pi$ of $G(F)$ is an induced representation if there exists a proper parabolic subgroup $P=MN$ of $G$ and a finite length smooth representation $\tau$ of $M(F)$ such that $\pi=I_{P}^{G}(\tau)$. Here $I_{P}^{G}(\cdot)$ is the normalized parabolic induction.

We use $\CR(G)$ to denote the Grothendieck group of finite length smooth representations of $G(F)$, and we use $\CR(G)_{ind}\subset \CR(G)$ to denote the subspace of $\CR(G)$ generated by induced representations. The following proposition will be used in the proof of Theorem \ref{main K-type}.

\begin{prop}\label{GL(n) induction}
Assume that $F=\BR$. If $G=\GL_n$ with $n>2$ or $G=Res_{\BC/\BR}H$ where $H$ is a connected reductive group defined over $\BR$ that is not abelian, then $\CR(G)=\CR(G)_{ind}$. In other words, $\CR(G)$ is generated by induced representations.
\end{prop}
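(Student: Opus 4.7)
The plan is to reduce the statement via the Langlands classification to showing that every standard module lies in $\CR(G)_{ind}$, and then to handle the remaining tempered case by establishing that $G(\BR)$ has no discrete series in either of our two situations.

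First, I would invoke the Langlands classification: every irreducible admissible representation $\pi$ of $G(\BR)$ arises as the unique Langlands quotient of a standard module $S(\pi)=I_P^G(\sigma_M\otimes e^\lambda)$, where $P=MN$ is a parabolic subgroup, $\sigma_M$ is an irreducible tempered representation of $M(\BR)$, and $\lambda$ lies in the open positive chamber of $\Fa_M^*$. Since the classes $\{[S(\pi)]\}$ form a $\BZ$-basis of $\CR(G)$ (the transition matrix to the basis of irreducibles being upper unitriangular in an appropriate order), it suffices to show $[S(\pi)]\in\CR(G)_{ind}$ for every $\pi$. If $M\neq G$ this is immediate from the definition of $\CR(G)_{ind}$, so the only case to treat is $M=G$, i.e., when $\pi$ itself is tempered.

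Next, I would show that $G(\BR)$ admits no discrete series in either case. For $G=\GL_n(\BR)$ with $n>2$, a Cartan subgroup has the form $(\BR^\times)^a\times(\BC^\times)^b$ with $a+2b=n$; its compact part has dimension $b\leq n/2$, while the quotient by the split center $\BR^\times$ has real dimension $n-1$, so compactness modulo center would require $b\geq n-1$, which is impossible when $n>2$. For $G=Res_{\BC/\BR}H$ with $H$ nonabelian, every Cartan of $G(\BR)=H(\BC)$ is isomorphic to $T(\BC)$ for a maximal torus $T\subset H$, and $T(\BC)/Z(H)(\BC)\cong(\BC^\times)^{\dim T-\dim Z(H)}$ is noncompact precisely when $T\neq Z(H)$, i.e., when $H$ is not a torus. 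By Knapp-Zuckerman, every tempered irreducible $\pi$ of $G(\BR)$ is then a constituent of $I_{P'}^G(\delta)$ for some parabolic $P'=LN'$ with $L\neq G$ and some discrete series $\delta$ of $L$.

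Finally, to conclude $[\pi]\in\CR(G)_{ind}$, I would use the irreducibility of such tempered induction in both settings: for $\GL_n(\BR)$ the Knapp-Stein R-group is known to be trivial, so $I_{P'}^G(\delta)$ is already irreducible; for complex $G=H(\BC)$ every tempered representation is actually a unitary minimal principal series $I_B^G(\chi)$ (with $B$ a Borel of $H$ viewed as a minimal parabolic of $G$ over $\BR$), again irreducible because the R-group is always trivial for complex groups since all roots are complex. In either case $[\pi]=[I_{P'}^G(\delta)]\in\CR(G)_{ind}$, completing the argument. The main obstacle is thus the triviality of the Knapp-Stein R-group in these two settings; this is a classical input from the tempered representation theory of real reductive groups, and if it ever failed one would have to separate the constituents using several different inductions together with the Grothendieck-group relations, which would complicate the argument considerably.
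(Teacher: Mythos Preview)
Your proof is correct and follows essentially the same strategy as the paper: reduce via the Langlands classification to the tempered case, then show that tempered representations are spanned by proper inductions by ruling out any ``elliptic'' contribution. Your verification that no Cartan subgroup is compact modulo center is exactly the paper's assertion that $G_{ell}(\BR)=\emptyset$.

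The only genuine difference is in how the tempered case is finished. The paper argues abstractly: since $G_{ell}(\BR)=\emptyset$, there are no elliptic tempered representations in Arthur's sense, and hence the tempered part of $\CR(G)$ is already spanned by proper parabolic inductions. You instead use Knapp--Zuckerman together with the triviality of the Knapp--Stein $R$-group for $\GL_n(\BR)$ and for complex groups. Your route is more explicit and actually proves the stronger statement that every irreducible tempered representation of $G(\BR)$ is itself a full parabolic induction (not merely a $\BZ$-linear combination of such). The paper's route is shorter, avoids the $R$-group input, and would apply verbatim to any real group with $G_{ell}(\BR)=\emptyset$, even one with nontrivial $R$-groups; conversely, your argument makes the structure of the tempered dual completely transparent in these two cases.
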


\begin{proof}
This follows from the fact that $G_{ell}(\BR)=\emptyset$ when $G=\GL_n$ ($n>2$) or when $G=Res_{\BC/\BR}H$ where $H$ is a connected reductive group defined over $\BR$ that is not abelian. More specifically, since $G_{ell}(\BR)=\emptyset$, $G(\BR)$ does not have elliptic representation. This implies that all the tempered representations of $G(\BR)$ are generated by induced representations. Together with the Langlands classification, we know that $\CR(G)=\CR(G)_{ind}$.
\end{proof}

\subsection{Quasi character and germ expansion}\label{section shalika germ}
Let $Nil(\Fg(F))$ (resp. $Nil_{reg}(\Fg(F))$) be the set of nilpotent orbits (resp. regular nilpotent orbits) of $\Fg(F)$. In particular, the set $Nil_{reg}(\Fg(F))$ is empty unless $G(F)$ is quasi-split. For every $\CO\in Nil(\Fg(F))$ and $f\in \CS(\Fg(F))$, we use $J_{\CO}(f)$ to denote the nilpotent orbital integral of $f$ associated to $\CO$. Harish-Chandra showed that there exists a unique smooth function $Y\rightarrow \hat{j}(\CO,Y)$ on $\Fg_{reg}(F)$,  which is invariant under $G(F)$-conjugation, and locally integrable on $\Fg(F)$, such that for every $f\in \CS(\Fg(F))$, we have
$$J_{\CO}(\hat{f})=\int_{\Fg(F)} f(Y) \hat{j}(\CO,Y) dY.$$

On the other hand, for $X\in \Fg_{reg}(F)$ and $f\in \CS(\Fg(F))$, let $J_G(X,f)$ be the orbital integral. Harish-Chandra also showed that there exists a unique smooth function $Y\rightarrow \hat{j}(X,Y)$ on $\Fg_{reg}(F)$,  which is invariant under $G(F)$-conjugation, and locally integrable on $\Fg(F)$, such that for every $f\in \CS(\Fg(F))$, we have
$$J_G(X,\hat{f})=\int_{\Fg(F)} f(Y) \hat{j}(X,Y) dY.$$

Assume that $F$ is p-adic. If $\theta$ is a smooth function on $G_{reg}(F)$, invariant under $G(F)-$conjugation. We say it is a quasi-character if for every $x\in G_{ss}(F)$, there is a good neighborhood $\omega_x$ of $0$ in $\Fg_x(F)$, and for every $\CO\in Nil(\Fg_x(F))$, there exists $c_{\theta,\CO}(x)\in \BC$ such that
$$\theta(x\exp(X))=\sum_{\CO\in Nil(\Fg_x(F))} c_{\theta,\CO}(x) \hat{j}(\CO,X)$$
for every $X\in \omega_{x,reg}$. We refer the readers to Section 3 of \cite{W10} for the definition of good neighborhood. The coefficients $\{c_{\theta,\CO}(x)|\; \CO\in Nil(\Fg_x(F))\}$ (resp. $\{c_{\theta,\CO}(x)|\; \CO\in Nil_{reg}(\Fg_x(F))\}$) are called the germs (resp. regular germs) of $\theta$ at $x$.

Similarly, if $\theta$ is a smooth function on $\Fg_{reg}(F)$, invariant under $G(F)-$conjugation. We say it is a quasi-character on $\Fg(F)$ if for every $X\in \Fg_{ss}(F)$, there exists an open $G_X$-invariant neighborhood $\omega_X\subset \Fg_X(F)$ of $0$, and for every $\CO \in Nil(\Fg_X(F))$, there exists $c_{\theta,\CO}(X)\in \BC$ such that
$$\theta(X+Y)=\sum_{\CO\in Nil(\Fg_X(F))} c_{\theta,\CO}(X) \hat{j}(\CO,Y)$$
for every $Y\in \omega_{X,reg}$.

When $F$ is archimedean, we refer the readers to Section 4.2-4.4 of \cite{B15} for the definition of quasi-character. In this case, the germ expansions become
$$D^G(x\exp(X))^{1/2}\theta(x\exp(X))=D^G(x\exp(X))^{1/2}\sum_{\CO\in Nil_{reg}(\Fg_x(F))} c_{\theta,\CO}(x) \hat{j}(\CO,X)+O(|X|),$$
$$D^G(X+Y)^{1/2}\theta(X+Y)=D^G(X+Y)^{1/2}\sum_{\CO\in Nil_{reg}(\Fg_X(F))} c_{\theta,\CO}(X) \hat{j}(\CO,Y)+O(|Y|).$$

The most important example of quasi-character on $G(F)$ is the Harish-Chandra character $\theta_{\pi}$ of finite length smooth representation of $G(F)$. Examples of quasi-character on $\Fg(F)$ are the functions $\hat{j}(X,\cdot)$ ($X\in \Fg_{reg}(F)$) and $\hat{j}(\CO,\cdot)$ ($\CO\in Nil(\Fg(F))$) defined above.

For $X\in \Fg_{reg}(F)$, we use $\Gamma_{\CO}(X)$ ($\CO\in Nil(\Fg(F))$ in the p-adic case and $\CO\in Nil_{reg}(\Fg(F))$ in the archimedean case) to denote the germs of the quasi-character $\hat{j}(X,\cdot)$ at $0\in \Fg(F)$. This is called the Shalika germ. In particular, we have the germ expansion
$$\hat{j}(X,Y)=\sum_{\CO\in Nil(\Fg(F))} \Gamma_{\CO}(X)\hat{j}(\CO,Y),\; F\; \text{p-adic};$$
$$D^G(X+Y)^{1/2}\hat{j}(X,Y)=D^G(X+Y)^{1/2}\sum_{\CO\in Nil_{reg}(\Fg(F))} \Gamma_{\CO}(X)\hat{j}(\CO,Y) +O(|Y|),\; F\; \text{archimedean}$$
for $Y\in \Fg_{reg}(F)$ close to $0$.

Finally, for $f\in \CC_{scusp}(G(F))$ (resp. $f\in \CS_{scusp}(\Fg(F))$), let $\theta_f$ be the quasi-character on $G(F)$ (resp. $\Fg(F)$)
defined via the weighted orbital integrals of $f$. Also for $f\in \CS_{scusp}(\Fg(F))$, let $\hat{\theta}_f=\theta_{\hat{f}}$ be the Fourier transform of $\theta_f$. We refer the readers to Section 5.2 and 5.6 of \cite{B15} for details.

\subsection{Regular germs under parabolic induction}\label{section germ parabolic}
Let $\pi$ be a finite length smooth representation of $G(F)$ and let $\theta_{\pi}$ be its Harish-Chandra character. For $x\in G_{ss}(F)$, define
$$c_{\pi}(x)=\begin{array}{cc}\left\{ \begin{array}{ccl} \frac{1}{|Nil_{reg}(\Fg_x(\BR))|} \sum_{\CO\in Nil_{reg}(\Fg_x(\BR))} c_{\theta_{\pi},\CO}(x) & \text{if} & Nil_{reg}(\Fg_x(\BR))\neq \emptyset \iff  G_x(\BR)\; \text{is quasi-split}; \\ 0 & \text{if} & Nil_{reg}(\Fg_x(\BR))=\emptyset. \\ \end{array}\right. \end{array}.$$

\begin{rmk}
\begin{enumerate}
\item \emph{For $x\in G_{reg}(\BR)$, $c_{\pi}(x)$ is just $\theta_{\pi}(x)$.}
\item \emph{If $Nil_{reg}(\Fg_x(\BR))$ only contains a unique element $\CO_x$, then $c_{\pi}(x)=c_{\theta_{\pi},\CO_x}(x)$.}
\end{enumerate}
\end{rmk}

Let $P=MN$ be a parabolic subgroup of $G$, $\tau$ be a finite length irreducible smooth representation of $M(\BR)$ and $\pi=I_{P}^{G}(\tau)$ be the normalized parabolic induction. For all $x\in G_{ss}(\BR)$, let $\CX_M(x)$ be a set of representatives for the $M(\BR)$-conjugacy classes of elements in $M(\BR)$ that are $G(\BR)$-conjugated to $x$. The following proposition was proved in Proposition 4.7.1 of \cite{B15} and it tells us the behavior of $c_{\pi}(x)$ under parabolic induction.

\begin{prop}\label{germ parabolic induction}
For all $x\in G_{ss}(\BR)$, we have
$$D^G(x)^{1/2}c_{\pi}(x) = |Z_G(x)(\BR):G_x(\BR)| \sum_{y\in \CX_M(x)} |Z_M(y)(\BR):M_y(\BR)|^{-1} D^M(y)^{1/2} c_{\tau}(y).$$
In particular, $c_{\pi}(x)=0$ if the set $\CX_M(x)$ is empty.
\end{prop}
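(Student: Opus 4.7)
The plan is to deduce Proposition~\ref{germ parabolic induction} essentially by combining the van Dijk-type character formula for parabolically induced representations on regular semisimple elements with the germ expansion at a general semisimple element $x$, and then matching the regular-germ coefficients on both sides. This is precisely the strategy used to establish Proposition~4.7.1 of \cite{B15}, and the proof reduces to verifying that the formula, which we know on the open dense set $G_{reg}(\BR)$, passes to the regular germs at an arbitrary $x\in G_{ss}(\BR)$.

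First I would recall the standard parabolic induction character identity: for $g\in G_{reg}(\BR)$ meeting $M(\BR)$,
\[
D^G(g)^{1/2}\,\theta_{\pi}(g) \;=\; \sum_{m\in\CX_M(g)} D^M(m)^{1/2}\,\theta_{\tau}(m),
\]
with the convention that the sum vanishes if no $G(\BR)$-conjugate of $g$ lies in $M(\BR)$. Second, I would fix $x\in G_{ss}(\BR)$ and analyze both sides in a good neighborhood of $0$ in $\Fg_x(\BR)$. Applying the germ expansion for the quasi-character $\theta_{\pi}$ around $x$ and using Harish-Chandra's characterization of the regular germs as the leading term of $D^G(x\exp(X))^{1/2}\theta_{\pi}(x\exp(X))$ modulo $O(|X|)$, the left-hand side of the claimed formula is identified with a weighted average of regular germs $c_{\theta_{\pi},\CO}(x)$. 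On the right-hand side, for each $y\in\CX_M(x)$, one applies the germ expansion for $\theta_{\tau}$ in a good neighborhood of $0$ in $\Fm_y(\BR)$, and then transports these expansions along the inclusion $\Fm_y\hookrightarrow \Fg_x$ (since $M_y\subset G_x$ when $y=x$ after conjugation; more generally one uses that the semisimple part of $y$ equals $x$ up to $G(\BR)$-conjugation).

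The main technical input, and the step I expect to take the most care, is the compatibility of regular germs under the inclusion $M_y\subset G_x$. Concretely, one needs the fact, due to Harish-Chandra and Waldspurger in the $p$-adic case and carried out in the archimedean setting in \cite{B15}, that the Fourier transforms $\hat j_{\Fg_x}(\CO,\cdot)$ of the regular nilpotent orbital integrals on $\Fg_x$ restrict to $\Fm_y(\BR)$-invariant functions whose regular-germ content at $0\in\Fm_y$ is given by a simple parabolic-descent formula in terms of $\hat j_{\Fm_y}(\CO',\cdot)$ for $\CO'\in Nil_{reg}(\Fm_y(\BR))$. In particular, when $G_x(\BR)$ is quasi-split this descent is compatible with the averaging used to define $c_\pi(x)$ and $c_\tau(y)$, and the combinatorial factor $|Nil_{reg}(\Fg_x(\BR))|/|Nil_{reg}(\Fm_y(\BR))|$ gets absorbed into the ratios $[Z_G(x)(\BR):G_x(\BR)]$ and $[Z_M(y)(\BR):M_y(\BR)]$ that record the number of $M(\BR)$- versus $G(\BR)$-conjugacy classes contributing.

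Finally, I would combine these ingredients: take the character identity on $G_{reg}(\BR)$, restrict to a small neighborhood of $x$ via $X\mapsto x\exp(X)$, expand both sides in the basis of Fourier transforms $\hat j(\CO,X)$, and read off the coefficient of the regular-nilpotent terms. The factor $[Z_G(x)(\BR):G_x(\BR)]$ appears because, while the sum in the character identity is over $M(\BR)$-conjugacy classes that are $G(\BR)$-conjugate to $x$, the regular germ $c_\pi(x)$ is defined via the connected centralizer $G_x(\BR)$; the factor $[Z_M(y)(\BR):M_y(\BR)]^{-1}$ has the analogous origin on the $M$-side. Putting everything together yields the stated formula, and the vanishing statement when $\CX_M(x)=\emptyset$ is immediate from the character identity, since then $\theta_{\pi}$ already vanishes in a $G$-invariant neighborhood of $x$.
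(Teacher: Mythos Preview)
The paper does not supply its own proof of this proposition; it simply cites Proposition~4.7.1 of \cite{B15}. Your proposal is a reasonable sketch of the argument behind that reference (van Dijk's character formula plus parabolic descent of regular germs), so there is nothing to compare: you are filling in what the paper leaves as a citation, and your outline is consistent with the cited source.
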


\begin{rmk}
\emph{When $G=\GL_n$ or when $x\in G_{reg}(\BR)$, the numbers $|Z_G(x)(\BR):G_x(\BR)|$ and $|Z_M(y)(\BR):M_y(\BR)|$ are always equal to 1. Hence the equation above becomes}
$$D^G(x)^{1/2}c_{\pi}(x) = \sum_{y\in \CX_M(x)}  D^M(y)^{1/2} c_{\tau}(y).$$
\end{rmk}

\subsection{Spherical subgroup}\label{section spherical subgroup}
Let $H\subset G$ be a connected closed subgroup also defined over $F$.  We say that $H$ is a spherical subgroup if there exists a Borel subgroup $B$ of $G$ (not necessarily defined over $F$ since $G(F)$ may not be quasi-split) such that $BH$ is Zariski open in $G$. Such a Borel subgroup is unique up to $H(\bar{F})$-conjugation. If this is the case, then we say $(G,H)$ is a spherical pair and $X=G/H$ is a spherical variety of $G$.

From now on, we assume that $H$ is a spherical subgroup. We say the spherical pair $(G,H)$ is {\it minimal} if the stabilizer of the open Borel orbit is finite modulo the center. In other words, $B\cap H/Z_G\cap H$ is finite for all Borel subgroups $B\subset G$ with $BH$ open in $G$. Examples of minimal spherical varieties are the Whittaker model, the Gan-Gross-Prasad model, the Ginzburg-Rallis model, and all the split symmetric spaces. The following lemma follows from the definition of minimal spherical pair.

\begin{lem}\label{lem minimal spherical var}
Assume that $(G,H)$ is a spherical pair and $B\subset G$ be a Borel subgroup. Then $\dim(H)-\dim(Z_G\cap H)\geq \dim(G)-\dim(B)$. Moreover, the equality holds if and only if $(G,H)$ is minimal. In other words, $(G,H)$ is minimal if and only if the dimension of $H$ is equal to the dimension of the maximal unipotent subgroup of $G$ (up to modulo the center).
\end{lem}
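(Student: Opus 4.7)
The plan is to compute the dimension of the open orbit $BH$ in two different ways and then compare the result with $Z_G \cap H$. Throughout, all dimensions are taken over the algebraic closure $\bar F$, which is legitimate since only $\bar F$-dimensions enter the statement.

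First I would use the hypothesis that $BH$ is Zariski open in $G$ to obtain $\dim(BH)=\dim(G)$. Next, I would consider the multiplication morphism
\[
\mu\colon B\times H\longrightarrow G,\qquad (b,h)\longmapsto bh,
\]
whose image is $BH$. The fiber of $\mu$ over a point $g=bh$ is the orbit of $(b,h)$ under the action $(b,h)\cdot x=(bx,x^{-1}h)$ of $B\cap H$, and this action is free. Hence $\dim(\mu^{-1}(g))=\dim(B\cap H)$, and the fiber dimension theorem yields
\[
\dim(G)=\dim(BH)=\dim(B)+\dim(H)-\dim(B\cap H).
\]
Rearranging gives the key identity $\dim(H)-\dim(B\cap H)=\dim(G)-\dim(B)$.

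Now I would invoke the obvious inclusion $Z_G\cap H\subseteq B\cap H$, which implies $\dim(Z_G\cap H)\le \dim(B\cap H)$. Combining with the key identity,
\[
\dim(H)-\dim(Z_G\cap H)\;=\;\dim(G)-\dim(B)\;+\;\bigl(\dim(B\cap H)-\dim(Z_G\cap H)\bigr)\;\ge\;\dim(G)-\dim(B),
\]
which is the asserted inequality. Equality holds precisely when $\dim(B\cap H)=\dim(Z_G\cap H)$, i.e.\ when the algebraic group $(B\cap H)/(Z_G\cap H)$ has dimension $0$, equivalently is finite. This is exactly the definition of $(G,H)$ being minimal.

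Finally, for the reformulation, I would note that for any Borel $B=TU$ with unipotent radical $U$, the flag variety identity $\dim(G/B)=\dim(U)$ holds (both equal the number of positive roots). Therefore $\dim(G)-\dim(B)=\dim(U)$, and the equality case of the inequality reads $\dim\bigl(H/(Z_G\cap H)\bigr)=\dim(U)$, which is the claim that $\dim H$ equals the dimension of a maximal unipotent subgroup up to modulo the center. The argument is short and essentially formal; the only point requiring care is verifying that the fibers of $\mu$ are single $B\cap H$-orbits, which follows from the freeness of the action, so I do not anticipate a serious obstacle.
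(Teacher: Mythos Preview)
Your argument is correct and is precisely the natural way to unpack the claim; the paper itself gives no proof beyond the sentence ``The following lemma follows from the definition of minimal spherical pair.'' One small cosmetic point: the lemma is stated for an arbitrary Borel $B$, whereas you tacitly replace $B$ by one with $BH$ open---this is harmless since $\dim(B)$ is independent of the choice of Borel, and for the equality clause all Borels with $BH$ open are $H(\bar F)$-conjugate (as the paper notes just before the definition of minimal), so $\dim(B\cap H)$ is the same for each of them.
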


\begin{defn}
Let $P=MN$ be a proper parabolic subgroup of $G$. For a character $\xi:N(F)\rightarrow \BC^{\times}$ of $N(F)$, we use $M_{\xi}$ to denote the neutral component of the stabilizer of $\xi$ in $M$ (under the adjoint action). For $m\in M(F)$, let ${}^m\xi$ be the character of $N(F)$ defined by ${}^m\xi(n)=\xi(m^{-1}nm)$.

We say $\xi$ is a generic character if $\dim(M_{\xi})$ is minimal, i.e. $\dim(M_{\xi})\leq \dim(M_{\xi'})$ for any character $\xi':N(F)\rightarrow \BC^{\times}$ of $N(F)$. It is easy to see that if $\xi$ is a generic character, so is ${}^m\xi$ for all $m\in M(F)$. Moreover, there are finitely many generic characters of $N(F)$ up to $M(F)$-conjugation (which is in bijection with the open $M(F)$-orbits in $\Fn(F)/[\Fn(F),\Fn(F)]$ under the adjoint action).
\end{defn}

In this paper, we restrict ourselves to the same setting as in \cite{SV}. In other words, we consider two types of spherical varieties.
\begin{itemize}
\item The reductive case, i.e. $H$ is reductive.
\item The Whittaker induction of the reductive case: there exists a parabolic subgroup $P=MN$ of $G$, and a generic character $\xi:N(F)\rightarrow \BC^{\times}$ such that $H=H_0\ltimes N$ where $H_0=M_{\xi}\subset M$ is the neutral component of the stabilizer of $\xi$ in $M$ and $H_0$ is a reductive spherical subgroup of $M$.

    In this case, we let $G_0=M$ and we say that $(G,H)$ is the Whittaker induction of $(G_0,H_0,\xi)$. If $H$ is already reductive, we just let $(G_0,H_0,\xi)=(G,H,1)$. It is easy to see that $(G,H)$ is minimal if and only if $(G_0,H_0)$ is.
\end{itemize}

\begin{rmk}
\emph{In general the stabilizer of a generic character is not necessarily reductive (e.g. the parabolic subgroup of $\GL_3$ whose Levi subgroup is $\GL_2\times \GL_1$) and also not necessarily a spherical subgroup of $M$ (e.g. the parabolic subgroup of $\GL_9$ whose Levi subgroup is $\GL_3\times \GL_3\times \GL_3$).}
\end{rmk}

We use $W_G$ to denote the Weyl group of $G(\bar{F})$. When $H$ is reductive, we use $W_X$ to denote the little Weyl group of the spherical variety $X=G/H$ (defined in \cite{Knop90}) which can be identified as a subgroup of $W_G$. Finally, let $Z_{G,H}=Z_G\cap H$ and $A_{G,H}(F)$ be the maximal split torus of $Z_{G,H}(F)$.

\section{The support of geometric multiplicity}\label{Section torus}
In this section, let $(G,H)$ be a spherical pair which is the Whittaker induction of the reductive spherical pair $(G_0,H_0,\xi)$. Recall that when $H$ is reductive, we let $(G_0,H_0,\xi)=(G,H,1)$. We are going to define a subset of semisimple conjugacy classes of $H_0(F)$, which will be the support of the geometric multiplicity.

\begin{defn}\label{defn support}
Let $\CT(G,H)$ be the set of all the closed (not necessarily connected) abelian subgroups $T(F)$ of $H_0(F)$ (up to $H_0(F)$-conjugation) satisfies the following four conditions.
\begin{enumerate}
\item Every element of $T(F)$ is semisimple and $(G_T,H_T)$ is a minimal spherical variety with $G_T(F)$ quasi-split.
\item $T(F)=Z_{Z_G(T)}(F)\cap H(F)$ where $Z_{Z_G(T)}(F)$ is the center of $Z_G(T)(F)$. In particular, we have $Z_{G,H}(F)\subset T(F)$ and $A_{G,H}(F)\subset T^\circ(F)$. Here $T^{\circ}(F)$ is the neutral component of $T(F)$ which is a subtorus of $H_0(F)$.
\item $T(F)/Z_{G,H}(F)$ (or equivalently, $T^{\circ}(F)/A_{G,H}(F)$) is compact. This is equivalent to say that $H(F)\cap A_{G_T}(F)/A_{G,H}(F)$ is finite.
\item There exists $t\in T(F)$ such that $(G_t,H_t)=(G_T,H_T)$.
\end{enumerate}
Let $\CT(G,H)^\circ=\{T(F)\in \CT(G,H)|\; T(F)=T^\circ(F) Z_{G,H}(F)\}$.
\end{defn}

\noindent
For $T(F)\in \CT(G,H)$, there exists a nonempty (this follows from Definition \ref{defn support}(4)) subset $C(T,H)$ of $T(F)/T^\circ(F)$ satisfies the following two conditions:
\begin{itemize}
\item For $\gamma \in C(T,H)$, $(G_t,H_t)=(G_T,H_T)$ for almost all $t\in \gamma T^\circ(F)$.
\item For $\gamma \in T(F)/T^\circ(F)-C(T,H)$, $(G_t,H_t)\neq (G_T,H_T)$ for all $t\in \gamma T^\circ(F)$.
\end{itemize}
In particular, for $T(F)\in \CT(G,H)^\circ$, we have $(G_t,H_t)=(G_T,H_T)$ for almost all $t\in T(F)$.

\begin{defn}
For $T(F)\in \CT(G,H)$, let $T_H(F)=\cup_{\gamma \in C(T,H)} \gamma T^\circ(F)$. $\{T_H(F)|\; T(F)\in \CT(G,H)\}$ will be the support of the geometric multiplicity.
\end{defn}

\begin{rmk}
\emph{For $t\in H_{0,ss}(F)$, $(G_t,H_t)$ is the Whittaker induction of $(G_{0,t},H_{0,t},\xi)$. Hence $\CT(G,H)=\CT(G_0,H_0)$ and $T_H(F)=T_{H_0}(F)$ for all $T(F)\in \CT(G,H)=\CT(G_0,H_0)$. In other words, the geometric multiplicity of $(G,H)$ has the same support as the geometric multiplicity of $(G_0,H_0)$.}
\end{rmk}

\begin{rmk}
\emph{Here is another way to define the support of the geometric multiplicity: it is supported on all the semisimple conjugacy classes $\{h^{-1}th|\;h\in H_0(F)\}$ of $H_0(F)$ that satisfy the following two conditions.}
\begin{enumerate}
\item $(G_t,H_t)$ is a minimal spherical variety and $G_t(F)$ is quasi-split.
\item $H(F)\cap A_{G_t}(F)/A_{G,H}(F)$ is finite.
\end{enumerate}

\emph{As we mentioned in the introduction, the quasi-split condition ensures the existence of regular nilpotent orbits in $\Fg_t(F)$. By Lemma \ref{lem minimal spherical var}, the minimal spherical variety condition ensures that the homogeneous degree of the spherical variety $(G_t,H_t)$ (which is equal to $\dim(H_t)-\dim(Z_{G_t,H_t})$) is equal to the homogeneous degree of the regular germs of $\theta_{\pi}$ at $t$ (which is equal to the dimension of the maximal unipotent subgroup of $G_t$). Meanwhile, the second condition means that the geometric multiplicity is only supported on certain ``elliptic elements".}
\end{rmk}

\begin{rmk}\label{rmk support}
\emph{When the spherical variety $X=G/H$ does not have Type N spherical root (we refer the readers to Section 3.1 of \cite{SV} for the definitions of spherical root and Type N spherical root), we expect that (although we can prove it at this moment) $T(F)=T^{\circ}(F)Z_{G,H}(F)$ for all $T(F)\in \CT(G,H)$ (i.e. $\CT(G,H)=\CT(G,H)^\circ$). In other words, the geometric multiplicity is essentially supported on tori of $H_0(F)$. On the other hand, when $X=G/H$ has Type N root, the geometric multiplicity may support on some non-connected abelian subgroups of $H_0(F)$.}

\emph{For example, as we will see in Section 8, the geometric multiplicity of the model $(\GL_{n}(\BR),\SO_{n}(\BR))$ (which has Type N root when $n>2$) is supported on the set (which is not necessarily connected when $n>2$)
$$\{diag(I_{n_1}, -I_{2n_2},t)|\; t\in T(\BR)\}$$
where
$(n_1,n_2)$ runs over the set
$$I(n_1,n_2):=\{(n_1,n_2)\in \BZ_{\geq 0}|\; n-n_1-2n_2\;\text{is a nonnegative even number}\}$$
and $T(\BR)$ is a maximal elliptic torus of $\SO_{n-n_1-2n_2}(\BR)$. The multiplicity formula for this case will be proved in Section 8.}
\end{rmk}

The next three definitions will be used in Section \ref{Section nilpotent orbit}.

\begin{defn}\label{levi of spherical variety}
Let $\CL(G,H)$ be the set of standard Levi subgroups $L(F)$ of $G(F)$ satisfy the following condition.
\begin{itemize}
\item There exists $T(F)\in \CT(G,H)^\circ$ with $T(F)\neq Z_{G,H}(F)$ such that $L(F)$ is conjugated to the Levi subgroup $Z_{G}(A_T)(F)$ where $A_T(F)$ is a maximal split torus of $G_T(F)$.
\end{itemize}
\end{defn}

\begin{defn}\label{levi of semisimple}
For $t\in G_{reg}(F)$, let $T(F)=G_t(F)$, $A_T(F)$ be the maximal split subtorus of $T(F)$, and $L(t)(F)=Z_{G}(A_T)(F)$ which is a Levi subgroup of $G(F)$. In particular, $t$ is elliptic regular if and only if $L(t)=G$. Similarly we can define $L(X)(F)$ for $X\in \Fg_{reg}(F)$.
\end{defn}

\begin{defn}\label{defn null}
We say $X\in \Fg_{reg}(F)$ is null with respect to $H$ if $L(X)$ does not contain any element in $\CL(G,H)$ up to conjugation. Apparently this definition only depends on the $G(\bar{F})$-conjugacy class (i.e. stable conjugacy class) of $X$. As a result, we say a regular semisimple conjugacy class (resp. stable conjugacy class) of $\Fg(F)$ is null with respect to $H$ if every element in it is null with respect to $H$.
\end{defn}

\begin{rmk}
\emph{If $\CT(G,H)^{\circ}=\{Z_{G,H}(F)\}$ or $\emptyset$ (e.g. the Whittaker model), the set $\CL(G,H)$ is empty, which implies that every regular semisimple element in $\Fg(F)$ is null with respect to $H$.}
\end{rmk}

\section{The constant $d(G,H,F)$ for minimal spherical varieties}\label{Section constant d(G,H,F)}
In this section, assume that $(G,H)$ is a minimal spherical pair with $H$ reductive. Moreover, we assume that $G$ is quasi-split over $F$. Then we can find a Borel subgroup $B=TN\subset G$ defined over $F$ such that $BH$ is open in $G$ and $B\cap H$ is finite modulo the center.

We use $\Fg,\Fz=\Fz_{\Fg}, \Fh,\Fb,\Ft,\Fn$ to denote the Lie algebras of $G,Z_G,H,B,T,N$. By our choice of $H$ and $B$, we have
$$\Fh\cap \Fb=\Fh\cap \Fz,\; \Fg=\Fh+ \Fb.$$
Let $\Fh'=\{X\in \Fh|\; <X,Y>=0\;\text{for all}\; Y\in \Fz\cap \Fh\}$ and $\Fh^{\perp}=\{X\in \Fg|\; <X,Y>=0\;\text{for all}\; Y\in \Fh'\}$. Then we have
$$\Fh=\Fh'\oplus (\Fz\cap \Fh),\;\Fg=\Fh'\oplus \Fb,\Fg=\Fh^{\perp}\oplus \Fn.$$
In particular, for every $t\in \Ft$, there exists unique $n_t\in \Fn$ such that $t+n_t\in \Fh^{\perp}$. By using $\Fh^{\perp}\oplus \Fn=\Fg$ again, we know that the set $\{t+n_t|\; t\in \Ft\}$ is a vector subspace of $\Fb$ of dimension $\dim(\Ft)$. We use $\Ft_H$ to denote it. It is easy to see that $\Ft_H=\Fb\cap \Fh^{\perp}$ (hence it does not depends on the choice of $T$).

\begin{lem}\label{H intersect B}
If $\Ft_{reg}\cap \Ft_H\neq \emptyset$, then $H\cap B\subset T$. In particular, $H\cap B$ is abelian.
\end{lem}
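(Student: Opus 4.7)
The plan is to prove that the adjoint action of every $h \in H \cap B$ fixes the subspace $\Ft_H$ pointwise, from which the lemma follows by specializing to a regular element $t_0 \in \Ft_{reg} \cap \Ft_H$ provided by the hypothesis: since $\Ft$ is the centralizer of $t_0$ in $\Fg$ and the Weyl group acts freely on $\Ft_{reg}$, any element of $G$ centralizing $t_0$ must normalize $T$ with trivial image in $W_G$, hence lies in $T$. This gives $h \in T$, so $H \cap B \subset T$ is abelian.

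To carry out this plan, the first step is to show that $\Ft_H$ is stable under $\Ad(H \cap B)$. The subspace $\Fz \cap \Fh$ is $H$-invariant (indeed, pointwise fixed by $\Ad(H)$), and the form $<\;,\;>$ is $G$-invariant and therefore $H$-invariant; hence $\Fh'$ and its annihilator $\Fh^{\perp}$ are $H$-invariant. Combined with the obvious $B$-invariance of $\Fb$, this yields that $\Ft_H = \Fb \cap \Fh^{\perp}$ is stable under $\Ad(h)$ for all $h \in H \cap B$.

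The second step is to show that $\Ad(h)$ acts trivially on $\Ft_H$. Writing $h = t_1 n_1$ with $t_1 \in T$, $n_1 \in N$, the induced action of $h$ on $\Fb/\Fn \cong \Ft$ factors through $B/N = T$ acting on $\Ft$, which is trivial; so $\Ad(h)(X) - X \in \Fn$ for every $X \in \Fb$. Combined with the first step, $\Ad(h)(X) - X \in \Ft_H \cap \Fn$ for $X \in \Ft_H$. But the projection $\Ft_H \to \Ft$, $t + n_t \mapsto t$, is an isomorphism, so $\Ft_H \cap \Fn = 0$, and $\Ad(h)$ is the identity on $\Ft_H$ as desired. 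The only non-formal point in the whole argument is verifying the $H$-invariance statements in Step 1; everything else reduces to routine bookkeeping with the decomposition $\Fb = \Ft \oplus \Fn$.
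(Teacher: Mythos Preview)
Your argument is correct and follows essentially the same route as the paper's proof: both use that $\Fh^{\perp}$ is $\Ad(H)$-stable, that $\Ad(B)$ acts trivially on $\Fb/\Fn$, and that $\Fh^{\perp}\cap\Fn=0$ (equivalently $\Ft_H\cap\Fn=0$), to conclude that any $\gamma\in H\cap B$ fixes the chosen regular element and hence lies in $T$. The only cosmetic difference is that the paper works directly with a single element $t\in\Ft_{reg}\cap\Ft_H\subset\Fh^{\perp}$ and writes $\gamma t\gamma^{-1}=t+n$ with $n\in\Fn\cap\Fh^{\perp}=0$, whereas you first prove the slightly stronger statement that $\Ad(H\cap B)$ fixes all of $\Ft_H$ before specializing.
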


\begin{proof}
Fix $t\in \Ft_{reg}\cap \Ft_H$. Let $\gamma\in H\cap B$. In order to show that $\gamma\in T$, it is enough to show that $\gamma$ commutes with $t$. Since $\gamma\in  B$, we know that $\gamma t\gamma^{-1}=  t+n$ for some $n\in \Fn$. Since $\gamma \in H$ and $t\in \Fh^{\perp}$, we know that $t+n=\gamma t\gamma^{-1}\in \Fh^{\perp}$. This implies that $n=0$. Hence $\gamma$ commutes with $t$. This proves the lemma.
\end{proof}

\begin{defn}
Let $c(G,H,F)$ be the number of connected components of $B(F)\cap H(F)$.
\end{defn}

\begin{lem}
The number $c(G,H,F)$ is independent of the choice of $B$.
\end{lem}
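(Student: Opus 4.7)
The plan is to show that for any two Borel subgroups $B, B'$ of $G$ defined over $F$ satisfying the hypotheses (i.e., $BH, B'H$ open in $G$ and the intersections with $H$ finite modulo $Z_G \cap H$), the $F$-group schemes $B \cap H$ and $B' \cap H$ are $F$-isomorphic. This will give a homeomorphism of $F$-points in the analytic topology, so the number of connected components is independent of the choice of $B$.

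By uniqueness (up to $H(\bar F)$-conjugation) of the Borel subgroup with dense $H$-orbit, there exists $h \in H(\bar F)$ with $B' = hBh^{-1}$, yielding an $\bar F$-isomorphism $\iota := \mathrm{int}(h): B\cap H \to B'\cap H$. Since $B, B'$ are $F$-rational, for every $\sigma \in \Gal(\bar F/F)$ we have $\sigma(h)B\sigma(h)^{-1}=B'$, so the Galois twist $c_\sigma := h^{-1}\sigma(h)$ lies in $N_G(B)(\bar F) \cap H(\bar F) = (B\cap H)(\bar F)$. I would next invoke Lemma \ref{H intersect B} to conclude that $B\cap H$ is abelian, and then use abelianness to show that $\iota$ is Galois-equivariant: writing $\sigma(h) = h c_\sigma$, for $x \in (B\cap H)(\bar F)$ one computes
$$\sigma(\iota(x)) = \sigma(h)\sigma(x)\sigma(h)^{-1} = h c_\sigma \sigma(x) c_\sigma^{-1} h^{-1} = h \sigma(x) h^{-1} = \iota(\sigma(x)),$$
where the third equality uses that $c_\sigma$ and $\sigma(x)$ both lie in the abelian group $(B\cap H)(\bar F)$. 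Hence $\iota$ descends to an $F$-isomorphism $B\cap H \cong B'\cap H$, as required.

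The main obstacle is verifying the hypothesis $\Ft_{reg}\cap \Ft_H \neq \emptyset$ of Lemma \ref{H intersect B}. Choosing an $F$-rational maximal torus $T \subset B$ (possible since $G$ is quasi-split), this amounts to showing that $\Ft \cap \Fh^\perp$ contains a regular semisimple element of $\Fg$. I expect this to follow from a dimension count using the decomposition $\Fg = \Fh^\perp \oplus \Fn$ together with minimality of $(G,H)$ (which constrains $B\cap H$ to be zero-dimensional modulo $Z_G\cap H$, forcing the linear map $t\mapsto n_t$ from $\Ft$ to $\Fn$ to have a large enough kernel). In the worst case, one can bypass this verification by arguing directly on the level of Galois cohomology: the cocycle $c_\sigma$ defines an inner twist of the $F$-group scheme $B\cap H$, and whenever this twist is trivial (which is automatic as soon as $B\cap H$ is abelian, a property one expects for all minimal spherical pairs), the same descent goes through.
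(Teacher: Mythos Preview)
Your approach is essentially the paper's: conjugate by $h\in H(\bar F)$ to get a $\bar F$-isomorphism $B\cap H\cong B'\cap H$, then use abelianness of $B\cap H$ (from Lemma~\ref{H intersect B}) to show the conjugation is Galois-equivariant and hence descends to $F$. The Galois computation you wrote is exactly the one in the paper.

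The gap you flag---verifying the hypothesis $\Ft_{reg}\cap\Ft_H\neq\emptyset$ of Lemma~\ref{H intersect B}---is genuine, but the resolution is much simpler than a dimension count or a cohomological bypass: you are free to replace $T$ by any $N(F)$-conjugate without changing $B$ (and $\Ft_H=\Fb\cap\Fh^{\perp}$ does not depend on $T$). Pick any $t\in\Ft_{reg}(F)$; then $t+n_t\in\Ft_H(F)$ is regular semisimple, being $N$-conjugate to $t$. The unique $n\in N$ with $\mathrm{Ad}(n)(t+n_t)=t$ is Galois-fixed (by uniqueness) hence lies in $N(F)$, and setting $T_1=n^{-1}Tn$ gives an $F$-rational maximal torus of $B$ with $t+n_t\in(\Ft_1)_{reg}\cap\Ft_H$. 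This is precisely how the paper proceeds: it opens with ``By Lemma~\ref{H intersect B}, up to conjugating $T$ (resp.\ $T'$) by an element of $N(F)$ (resp.\ $N'(F)$), we may assume that $B\cap H\subset T$ (resp.\ $B'\cap H\subset T'$).''

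Your proposed workarounds do not close the gap. A dimension count alone will not locate a regular element in $\Ft\cap\Fh^{\perp}$ for an arbitrary $T$ (this intersection can genuinely miss the regular locus). And your Galois-cohomology bypass is circular: it assumes $B\cap H$ is abelian, which is exactly what Lemma~\ref{H intersect B} is invoked to prove.
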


\begin{proof}
Let $B=TN$ and $B'=T'N'$ be two Borel subgroups of $G$ defined over $F$ with $BH$ and $B'H$ being Zariski open in $G$. In order to prove the lemma, it is enough to show that the group $B(F)\cap H(F)$ is isomorphic to the group $B'(F)\cap H(F)$.

By Lemma \ref{H intersect B}, up to conjugating $T$ (resp. $T'$) by an element of $N(F)$ (resp. $N'(F)$), we may assume that $B\cap H\subset T$ (resp. $B'\cap H\subset T'$). Since $BH$ and $B'H$ are Zariski open in $G$, there exists $h\in H(\bar{F})$ such that $B=h^{-1}B'h$. Then the morphism
$$t\in B'\cap H \rightarrow h^{-1}th\in B\cap H$$
is an isomorphism. So it is enough to show that for all $t\in B'(F)\cap H(F)$, we have $h^{-1}th\in B(F)\cap H(F)$.

For $\sigma\in Gal(\bar{F}/F)$, since both $B$ and $B'$ are defined over $F$, we have $h^{-1}B'h=B=\sigma(h)^{-1}B'\sigma(h)$. This implies that $B'=h\sigma(h)^{-1}B'\sigma(h) h^{-1}$. Hence $h\sigma(h)^{-1}\in B'\cap H'\subset T'$. Together with the fact that $B'(F)\cap H(F)\subset T'(F)$, we have
$$\sigma(h^{-1}th)=\sigma(h)^{-1}t\sigma(h)=h^{-1}(h\sigma(h)^{-1}t\sigma(h)h^{-1})h=h^{-1}th$$
for all $t\in B'(F)\cap H(F)$. This implies that $h^{-1}th\in B(F)\cap H(F)$.
\end{proof}

\begin{lem}
There is a bijection between open orbits in $B(F)\back G(F)/H(F)$ and $\ker(H^{1}(F,H\cap B)\rightarrow H^1(F,H))$. We use $d(G,H,F)'$ to denote the number of open orbits in $B(F)\back G(F)/H(F)$.
\end{lem}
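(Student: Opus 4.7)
The plan is to apply Serre's Galois-cohomological classification of $F$-rational orbits on a homogeneous space to the unique open $(B\times H)$-orbit $X := BH$ in $G$.

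First, since $(G,H)$ is spherical, $X$ is Zariski open in $G$ and defined over $F$ (being the unique open geometric orbit), so $X(F)$ is an open subset of $G(F)$, stable under the action $(b,h)\cdot g = bgh^{-1}$ of $B(F)\times H(F)$, and the open double cosets in $B(F)\backslash G(F)/H(F)$ are exactly the $(B(F)\times H(F))$-orbits on $X(F)$. Taking $1 \in X(F)$ as base point, a direct computation shows its stabilizer in $B\times H$ is the diagonal embedding $\Delta(B\cap H) = \{(k,k) : k \in B\cap H\}$, yielding $X \simeq (B\times H)/\Delta(B\cap H)$ as a $(B\times H)$-variety over $\bar F$.

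By Serre's classification (Ch.~I, \S5.4 of \emph{Cohomologie Galoisienne}), the set of $(B(F)\times H(F))$-orbits on $X(F)$ is in canonical bijection with $\ker(H^1(F, B\cap H) \to H^1(F, B\times H))$. Using the splitting $H^1(F, B\times H) = H^1(F, B) \times H^1(F, H)$ with the map from $H^1(F, B\cap H)$ given by the pair $(\iota_B, \iota_H)$ of maps induced by the natural inclusions, this kernel equals $\ker\iota_B \cap \ker\iota_H$.

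The principal obstacle is then to identify this intersection with $\ker\iota_H$ alone, i.e., to show $\ker\iota_H \subseteq \ker\iota_B$. Since the unipotent radical $N$ of $B$ has $H^1(F, N) = 0$, we have $H^1(F, B) = H^1(F, T)$; and invoking Lemma \ref{H intersect B} (under the mild hypothesis $\Ft_{reg}\cap \Ft_H \neq \emptyset$, which holds in the cases of interest), $B\cap H \subseteq T$, so $\iota_B$ factors through $B\cap H \hookrightarrow T$. Given $\alpha\in \ker\iota_H$ represented by a cocycle $\sigma \mapsto h^{-1}\sigma(h)$ with $h\in H(\bar F)$ taking values in $B\cap H\subseteq T$, one has to produce $b\in B(\bar F)$ with $h^{-1}\sigma(h) = b^{-1}\sigma(b)$, equivalently a decomposition $h = g\cdot b^{-1}$ with $g\in G(F)$, $b\in B(\bar F)$. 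Here the minimality of $(G,H)$ enters decisively: $(B\cap H)^\circ = Z_{G,H}^\circ$ is central in $G$ and $B\cap H/Z_{G,H}$ is finite, so the image of $\iota_B$ on cohomology is controlled by the central part, and the open-orbit geometry (together with the fact that the Galois-invariant class $hB\in (G/B)(F)$ lies in the open $H$-orbit) supplies the required factorization. This is the technical heart of the proof, and I expect the remaining details to amount to a diagram chase combining the above inputs.
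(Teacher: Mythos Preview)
Your approach via Serre's classification is exactly what the paper does, just phrased abstractly: the paper writes $\gamma_i=b_ih_i$ with $b_i\in B(\bar F)$, $h_i\in H(\bar F)$ and forms the cocycle $\sigma\mapsto b_i^{-1}\sigma(b_i)=h_i\sigma(h_i)^{-1}$, which is your Serre cocycle for the $(B\times H)$-action on $X=BH$. You also correctly isolate the one point the paper leaves to the reader, namely that the resulting bijection with $\ker\iota_B\cap\ker\iota_H$ coincides with $\ker\iota_H$, i.e.\ that $\ker\iota_H\subseteq\ker\iota_B$.

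Where you go astray is in the proposed resolution of that step. Neither minimality of $(G,H)$ nor Lemma~\ref{H intersect B} is relevant; the inclusion $\ker\iota_H\subseteq\ker\iota_B$ holds for the sole reason that $B$ is a parabolic subgroup. Concretely: if $c_\sigma=h\,\sigma(h)^{-1}\in B\cap H$ for some $h\in H(\bar F)$, then $\sigma(h)\in Bh$ for every $\sigma$, so the right coset $Bh$ is Galois-stable and defines an $F$-point of $B\backslash G$. Since $G(F)\to (B\backslash G)(F)$ is surjective (equivalently $\ker(H^1(F,B)\to H^1(F,G))=1$, the standard fact that $F$-parabolics of a fixed type are $G(F)$-conjugate), we may write $Bh=Bg$ with $g\in G(F)$; then $h=b^{-1}g$ for some $b\in B(\bar F)$ and $c_\sigma=b^{-1}\sigma(b)$, so $c\in\ker\iota_B$. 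Your attempt to invoke centrality of $(B\cap H)^\circ$ and ``open-orbit geometry'' is a detour that does not close the argument; drop it and use the parabolic fact instead.
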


\begin{proof}
Let $X=BH$ which is an open subvariety of $G$. Then open orbits in $B(F)\back G(F)/H(F)$ are just the orbits in $B(F)\back X(F)/H(F)$. Let $B(F)\back X(F)/H(F)=\cup_{i=1}^{l} B(F)\gamma_i H(F)$. For each $i$, there exists $b_i\in B(\bar{F})$ and $h_i\in H(\bar{F})$ such that $\gamma_i=b_ih_i$. Then it is easy to see that the map
$$\sigma\in Gal(\bar{F}/F)\mapsto  b_{i}^{-1}\sigma(b_i)=h_i \sigma(h_i)^{-1}\in H\cap B$$
is a cocycle whose image in $H^1(F,H\cap B)$ only depends on the orbit $B(F)\gamma_i H(F)$. Also by definition, this cocycle becomes a coboundary in $H$. This gives a well defined map from $B(F)\back X(F)/H(F)$ to $\ker(H^{1}(F,H\cap B)\rightarrow H^1(F,H))$. One can easily check that this map is a bijection.
\end{proof}

\begin{defn}
We define the constant $d(G,H,F)$ to be
$$d(G,H,F)=d'(G,H,F) \times \frac{|W_G|}{|W_X|}.$$
Recall that $W_X$ is the little Weyl group of the spherical variety $X=G/H$ and $W_G$ is the Weyl group of $G(\bar{F})$.
\end{defn}

\begin{rmk}
\emph{Since $(G,H)$ is a minimal spherical pair, it is wavefront if and only if $W_G=W_X$. If this is the case, we have
$$d(G,H,F)=d'(G,H,F)=|\ker(H^{1}(F,H\cap B)\rightarrow H^1(F,H))|.$$
We refer the readers to Section 2.1 of \cite{SV} for the definition of wavefront spherical variety.}
\end{rmk}

The rest of this subsection is to study the relation between the number $d(G,H,F)$ and the slice representation (i.e. the conjugation action of $H(F)$ on $\Fh^{\perp}(F)$).
\begin{lem}
There exists a $W_G$-invariant Zariski open subset $\Ft^0$ of $\Ft_{reg}$ such that for all $t\in \Ft^0(\bar{F})$, the $G(\bar{F})$-conjugacy class of $t$ in $\Fh^{\perp}(\bar{F})$ breaks into $\frac{|W_G|}{|W_X|}$-many $H(\bar{F})$-conjugacy classes.
\end{lem}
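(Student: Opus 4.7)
The plan is to identify the categorical quotient $\Fh^\perp /\!\!/ H$ with $\Ft/W_X$ via a Chevalley-type restriction theorem of Knop, then compare it with $\Fg /\!\!/ G = \Ft/W_G$ and count fibers of the resulting finite cover $\Ft/W_X \to \Ft/W_G$, which has degree $|W_G|/|W_X|$.

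First, I would note that the projection $\Fb \to \Ft$ modulo $\Fn$ restricts to an isomorphism $\Ft_H \xrightarrow{\sim} \Ft$, with the element of $\Ft_H$ lying over $t \in \Ft$ being $t+n_t$. For $t \in \Ft_{reg}(\bar{F})$ the operator $\mathrm{ad}(t)$ is invertible on $\Fn$, so the morphism $u \in N(\bar{F}) \mapsto utu^{-1}-t \in \Fn(\bar{F})$ is surjective; hence $t+n_t$ is $N(\bar{F})$-conjugate to $t$, and in particular it is regular semisimple and lies in $G(\bar{F}) \cdot t \cap \Fh^\perp(\bar{F})$. I would then invoke Knop's Chevalley-type theorem for the reductive minimal spherical pair $(G,H)$: restriction along $\Ft_H \hookrightarrow \Fh^\perp$ gives an isomorphism $\bar{F}[\Fh^\perp]^H \cong \bar{F}[\Ft]^{W_X}$, where $W_X \subseteq W_G$ is the little Weyl group of $X=G/H$. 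Minimality of $(G,H)$, which forces $\mathrm{rank}(X)=\mathrm{rank}(G)$, is precisely what guarantees that Knop's Cartan for $X$ coincides with $\Ft$. Dually, this identifies the natural morphism $\Fh^\perp /\!\!/ H \to \Fg /\!\!/ G$ with the standard quotient $\Ft/W_X \to \Ft/W_G$.

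Finally, I would take $\Ft^0 \subseteq \Ft_{reg}$ to be the $W_G$-invariant Zariski open subset on which (a) $[t] \in \Ft/W_G$ lies in the \'etale locus of $\Ft/W_X \to \Ft/W_G$, and (b) every $H(\bar{F})$-orbit in $\Fh^\perp(\bar{F})$ mapping to $[t]$ is closed; the nonemptiness of (b) follows from the standard fact that, for the slice representation of a reductive group, the generic orbit is closed. For $t \in \Ft^0(\bar{F})$ the $G(\bar{F})$-orbit of $t$ is closed (by regular semisimplicity) and equals the fiber of $\Fg \to \Fg /\!\!/ G$ over $[t]$, so $G(\bar{F}) \cdot t \cap \Fh^\perp(\bar{F})$ is exactly the preimage of $[t]$ in $\Fh^\perp /\!\!/ H = \Ft/W_X$, which consists of $|W_G|/|W_X|$ points, each corresponding to a single closed $H(\bar{F})$-orbit. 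The main obstacle is the Knop-Chevalley isomorphism $\bar{F}[\Fh^\perp]^H \cong \bar{F}[\Ft]^{W_X}$ in the required generality: for split symmetric pairs this is the Kostant-Rallis theorem, while for arbitrary minimal spherical pairs it should follow from Knop's polar representation machinery together with the rank identity $\mathrm{rank}(X) = \mathrm{rank}(G)$ imposed by Lemma \ref{lem minimal spherical var}.
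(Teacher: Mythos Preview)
Your approach is essentially the same as the paper's: both identify $\Fh^{\perp}/\!\!/H$ with $\Ft/W_X$ via Knop's theorem and compare with $\Fg/\!\!/G=\Ft/W_G$, the paper phrasing this through the cotangent bundle $T^{\ast}X=\Fh^{\perp}\times_H G$ so that $\Fh^{\perp}/\!\!/H=T^{\ast}X/\!\!/G=\Fa_{X}^{\ast}/\!\!/W_X$, and using finiteness of $B\cap H$ (after reducing modulo $Z_{G,H}$) to get $\Fa=\Fa_X$, which is exactly your rank identity. Your write-up is in fact more careful than the paper's about the passage from the GIT statement to the orbit count (the \'etale locus and generic closedness of orbits), which the paper leaves implicit.
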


\begin{proof}
By modulo $H$ and $G$ by the center $Z_{G,H}=H\cap Z_G$, we may assume that $H\cap Z_G=\{1\}$. Then we know that $B\cap H$ is finite.
We denote by $\CX(T)$ the group of rational characters of $T$, and define $\Fa=\Hom(\CX(T),\BR)$. Let $\CX(X)$ be the group of $T$-eigencharacters on $\bar{F}(X)^{(B)}$ where $\bar{F}(X)^{(B)}$ is the multiplicative group of nonzero $B$-eigenfunctions on $\bar{F}(X)$. Finally, let $\Fa_X=\Hom(\CX(X),\BR)$. Since $H\cap B$ is finite, we have $\Fa=\Fa_X$. Let $\Fa^{\ast}=\Fa_{X}^{\ast}$ be the dual of $\Fa=\Fa_X$, and let $T^{\ast}X=\Fh^{\perp}\times_H G$ be the cotangent bundle of $X$. By the result in \cite{Knop90}, we have $\Fh^{\perp}\sslash H= T^{\ast}X \sslash G=\Fa_{X}^{\ast} \sslash W_X=\Fa^{\ast} \sslash W_X$. Meanwhile, we have $\Fg\sslash G=\Fa^{\ast} \sslash W_G$. This proves the lemma.
\end{proof}

\begin{rmk}
\emph{When $(G,H)$ is a symmetric pair (which is wavefront), we have $W_G=W_X$. By the work of Kostant-Rallis \cite{KR}, we can even take $\Ft^0$ to be $\Ft_{reg}$. Examples of non wavefront minimal spherical varieties are $(\SO_{2n+1},\GL_n)$ and $(\GL_{2n+1},\Sp_{2n})$.}
\end{rmk}

\begin{defn}
Let $\Fh^{\perp,0}$ be the set of elements in $\Fh^{\perp}$ that is $G$-conjugated to an element in $\Ft^{0}$. It is a Zariski open subset of $\Fh^{\perp}$. By the above lemma, we know that each $G(\bar{F})$-conjugacy class in $\Fh^{\perp,0}(\bar{F})$ breaks into $\frac{|W_G|}{|W_X|}$-many $H(\bar{F})$-conjugacy classes.
\end{defn}

\begin{lem}
For every $t\in \Ft_H(F)$ regular semisimple, the $H(\bar{F})$-conjugacy class of $t$ in $\Fh^{\perp}(F)$ breaks into $d(G,H,F)'$ many $H(F)$-conjugacy classes.
\end{lem}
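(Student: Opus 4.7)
The plan is to express the count of $H(F)$-orbits via Galois cohomology and then identify the $H$-stabilizer of $t$ with $H\cap B$.

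First, I invoke the standard bijection: the $H(F)$-orbits contained in the intersection of the $H(\bar F)$-orbit of $t$ with $\Fh^{\perp}(F)$ correspond naturally to $\ker\bigl(H^{1}(F,H_t)\to H^{1}(F,H)\bigr)$, where $H_t$ denotes the scheme-theoretic stabilizer of $t$ in $H$. Since $d'(G,H,F)=|\ker(H^{1}(F,H\cap B)\to H^{1}(F,H))|$ by the preceding definition, the entire lemma reduces to proving the identification $H_t = H\cap B$ as $F$-algebraic subgroups of $H$.

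Second, I would establish $H_t = H\cap B$. Write $t=t_0+n_{t_0}$ with $t_0\in\Ft$ and $n_{t_0}\in\Fn$. Regularity and semisimplicity of $t$ in $\Fg$ force $t$ to be $N$-conjugate to its semisimple Jordan component, which lies in $\Ft$ and must equal $t_0$ by the uniqueness of the assignment $t'\mapsto n_{t'}$; hence $t_0\in\Ft_{reg}$, $G_{t_0}=T$, and $G_t=\nu T\nu^{-1}\subset B$ for some $\nu\in N$. This yields $H_t=H\cap G_t\subset H\cap B$ for free. For the reverse inclusion, fix $h\in H\cap B$: then $\Ad(h)(t)\in\Fh^{\perp}\cap\Fb=\Ft_H$ because $\Fh^{\perp}$ is $H$-stable and $\Fb$ is $B$-stable. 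Decomposing $h=\nu'\tau$ with $\nu'\in N$, $\tau\in T$, and using that $\Ad(\tau)$ fixes $t_0\in\Ft$, that $\Ad(\nu')(t_0)\in t_0+\Fn$, and that $\Ad(h)(n_{t_0})\in\Fn$ (since $\Fn$ is $B$-stable), one sees that the $\Ft$-component of $\Ad(h)(t)$ equals $t_0$. The uniqueness of $n_{t_0}$ in the defining property of $\Ft_H$ then forces $\Ad(h)(t)=t$, so $h\in H_t$.

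The delicate point is the reverse inclusion $H\cap B\subset H_t$: it is not immediate that an arbitrary $h\in H\cap B$ should centralize the nilpotent piece $n_{t_0}$, and the argument sidesteps this by exploiting the rigidity of the graph $\Ft_H=\{t'+n_{t'}\mid t'\in\Ft\}$, which pins down a unique $\Fn$-piece for each prescribed $\Ft$-component. Once $H_t=H\cap B$ is in hand, the lemma is immediate from the definition of $d'(G,H,F)$ via the Galois cohomology bijection of the first step.
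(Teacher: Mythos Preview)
Your proposal is correct and follows essentially the same approach as the paper: both reduce to the Galois-cohomology bijection with $\ker(H^{1}(F,H_t)\to H^{1}(F,H))$ and then identify $H_t=H\cap B$. The only cosmetic difference is that the paper first replaces $T$ by an $N$-conjugate so that $t\in\Ft_{reg}(F)$ (whence $G_t=T$ and $H_t=H\cap T$), and then invokes the earlier Lemma showing $H\cap B\subset T$; you instead keep the decomposition $t=t_0+n_{t_0}$ and reprove that containment inline via the rigidity of $\Ft_H$, which is exactly the argument of that earlier Lemma. One small phrasing issue: your line ``$N$-conjugate to its semisimple Jordan component'' is off, since $t$ is already semisimple; what you mean (and use) is that $t$ is $N$-conjugate to its $\Ft$-projection $t_0$, forcing $t_0\in\Ft_{reg}$.
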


\begin{proof}
By conjugating $T$ we may assume that $t\in \Ft_{reg}(F)$. By Lemma \ref{H intersect B}, we know that $H\cap B\subset T$. Let $t'\in \Fh^{\perp}(F)$ be an element that is $H(\bar{F})$-conjugated to $t$. Then exists $h\in H(\bar{F})$ such that $ht'h^{-1}=t$. For all $\sigma\in Gal(\bar{F}/F)$, we have
$$\sigma(h)t'\sigma(h)^{-1}=ht'h^{-1}=t.$$
In particular, $\sigma(h)h^{-1}$ commutes with $t$. This implies that $\sigma(h)h^{-1}\in H\cap T=H\cap B$. Then it is easy to see that the map
$$\sigma\in Gal(\bar{F}/F)\mapsto  \sigma(h)h^{-1}\in H\cap B$$
is a cocycle whose image in $H^1(F,H\cap B)$ only depends on the $H(F)$-conjugacy classes of $t'$. Also it is easy to see that this cocycle becomes a coboundary in $H$. This gives a well defined map from the set of $H(F)$-conjugacy classes in the $H(\bar{F})$-conjugacy class of $t$ in $\Fh^{\perp}(F)$ to $\ker(H^{1}(F,T_0)\rightarrow H^1(F,H))$. One can easily check that this map is a bijection.
\end{proof}

Combining the lemmas above, we have proved the following proposition.

\begin{prop}\label{quasi-split conjugacy classes}
For every $t\in \Fh^{\perp,0}(F)$, if $G_t(F)$ is a maximal quasi-split torus of $G(F)$ (i.e. the conjugacy class of $t$ is ``quasi-split"), then the $G(\bar{F})$-conjugacy class of $t$ (i.e. the stable conjugacy class of $t$) in $\Fh^{\perp}(F)$ breaks into $d(G,H,F)=d(G,H,F)'\times \frac{|W_G|}{|W_X|}$ many $H(F)$-conjugacy classes.
\end{prop}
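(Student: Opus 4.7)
The plan is to combine the three lemmas above into a two-stage count. Given $t \in \Fh^{\perp,0}(F)$ with $G_t(F)$ a maximal quasi-split torus, the goal is to enumerate the $H(F)$-conjugacy classes inside the stable class $(G(\bar F)\cdot t) \cap \Fh^{\perp}(F)$ by passing first through $H(\bar F)$-orbits.

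As a preliminary normalization, I would use the fact that all maximal quasi-split tori of $G$ are $G(F)$-conjugate to $T$ (together with the definition of $\Fh^{\perp,0}$) to replace $t$ by a $G(F)$-conjugate lying in $\Ft_H(F) \cap \Fh^{\perp,0}(F)$; this reduction preserves the stable $G$-class and all the $H$-orbit counts we are after.

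The outer layer invokes the second lemma of this section: the $G(\bar F)$-orbit of $t$ in $\Fh^{\perp,0}(\bar F)$ decomposes into exactly $|W_G|/|W_X|$ many $H(\bar F)$-orbits. I would then argue, using the quasi-split hypothesis on $G_t$, that every one of these orbits carries an $F$-rational representative $t_i \in \Ft_H(F)$. Concretely, the parametrization of $H(\bar F)$-orbits by $W_G/W_X$ coming from the identification $\Fh^{\perp}\sslash H \cong \Fa^{\ast}\sslash W_X$ versus $\Fg\sslash G \cong \Fa^{\ast}\sslash W_G$ in the proof of that lemma is realized by $F$-rational elements of the normalizer $N_G(T)$ precisely when $G_t \cong T$ is maximal quasi-split, so Galois acts trivially on the set of $|W_G|/|W_X|$ orbits.

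The inner layer then applies the third lemma separately to each $t_i \in \Ft_H(F)$: its $H(\bar F)$-orbit in $\Fh^{\perp}(F)$ breaks into exactly $d(G,H,F)'$ many $H(F)$-orbits, in bijection with $\ker(H^1(F, H\cap B) \to H^1(F, H))$. Multiplying the two counts yields $\frac{|W_G|}{|W_X|} \cdot d(G,H,F)' = d(G,H,F)$, which is the desired total. The main obstacle I anticipate is the $F$-rationality step in the outer layer: one must verify that \emph{all} $|W_G|/|W_X|$ orbits, rather than merely some Galois-stable subset, contain $F$-points, and that the inner count $d(G,H,F)'$ is independent of the chosen representative $t_i$. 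Both rely critically on the quasi-split hypothesis on $G_t$ and constitute the technical core of the argument; without quasi-splitness, the effective count would drop to the number of Galois-fixed $H(\bar F)$-orbits, which can be strictly smaller.
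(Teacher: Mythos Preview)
Your proposal is correct and follows the same two-stage count the paper has in mind: first split the stable class into $|W_G|/|W_X|$ many $H(\bar F)$-orbits via the lemma on $\Ft^0$, then split each of those into $d(G,H,F)'$ many $H(F)$-orbits via the lemma identifying the inner count with $|\ker(H^1(F,H\cap B)\to H^1(F,H))|$. The paper's own proof is literally the sentence ``Combining the lemmas above, we have proved the following proposition,'' so you have in fact been more explicit than the paper about the architecture and about the one genuine verification left implicit---namely that under the quasi-split hypothesis on $G_t$ every one of the $|W_G|/|W_X|$ geometric $H(\bar F)$-orbits carries an $F$-rational representative in $\Ft_H(F)$, so that the inner lemma applies uniformly and the product count is honest.
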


\begin{rmk}
\emph{If $H\cap B\subset Z_G$, then by the same argument as above, we can even show that every $G(\bar{F})$-conjugacy class (not necessarily quasi-split) in $\Fh^{\perp,0}(F)$ breaks into $d(G,H,F)$ many $H(F)$-conjugacy classes.}
\end{rmk}

\begin{rmk}\label{rmk stable}
\emph{In general, if $(G,H)$ is the Whittaker induction of $(G_0,H_0,\xi)$ with $(G_0,H_0)$ minimal, we can also define an analogue of space $\Fh^{\perp}(F)$ by adding the information of $\xi$ (see Section \ref{Section slice nonred}). We will denote this space by $\Xi+\Fh_{0}^{\perp}(F)+\Fn(F)$ and we are still interested in how the stable conjugacy classes in $\Xi+\Fh_{0}^{\perp}(F)+\Fn(F)$ decomposes into $H(F)$-conjugacy classes.}

\emph{For most known cases, the stable conjugacy classes in $\Xi+\Fh_{0}^{\perp}(F)+\Fn(F)$ are the same as the $H(F)$-conjugacy classes, i.e. $d(G_0,H_0,F)=1$. In other words, two regular semisimple elements in $\Xi+\Fh_{0}^{\perp}(F)+\Fn(F)$ are $G(\bar{F})$-conjugated to each other if and only if they are $H(F)$-conjugated to each other. For the Whittaker model case, this follows from the theory of Kostant section \cite{Kos}. For the Gan-Gross-Prasad model case, this was proved in Section 9 of \cite{W10} (the orthogonal case) and Section 10 of \cite{B15} (unitary case). For the Ginzburg-Rallis model case, this was proved in Section 8 of \cite{Wan15}. This property is crucial in the proof of the local trace formula for those cases.}

\emph{The only exception among the known cases is the Ginzburg-Rallis model for unitary group (see Section \ref{section GR}). In that case, the number $d(G_0,H_0,F)$ is equal to 2 which means that every $G(\bar{F})$-conjugacy class in $\Xi+\Fh_{0}^{\perp}(F)+\Fn(F)$ breaks into two $H(F)$-conjugacy classes. However, although we have proved the multiplicity formula for this model in \cite{WZ}, it was not proved by the trace formula method. Instead, we first considered the Ginzburg-Rallis model for unitary similitude group (where the number $d(G_0,H_0,F)$ is equal to 1). We proved the trace formula and the multiplicity formula for the unitary similitude group case. Then we proved the multiplicity formula for the unitary group case by using the multiplicity formula of the unitary similitude group case.}

\emph{Hence if one wants to prove the multiplicity formula and local trace formula for general spherical varieties, one of the important steps is to develop a method to deal with the case when $d(G_0,H_0,F)\neq 1$. Roughly speaking, we need to ``stabilize" the trace formula.}
\end{rmk}

\section{Nilpotent orbits associated to minimal spherical varieties}\label{Section nilpotent orbit}
In this subsection, let $(G,H)$ be a minimal spherical pair with $G(F)$ quasi-split. The goal is to define a subset $\CN(G,H,\xi)$ (note that $\xi=1$ when $H$ is reductive) of $Nil_{reg}(\Fg(F))$.

\subsection{Conjugacy classes associated to regular nilpotent orbits}
Fix a regular nilpotent orbit $\CO$ of $\Fg(F)$. For $\Xi\in \CO$, by the theory of $\Fs\Fl_2$-triple, there exists a homomorphism
$$\varphi:\; F^{\times}\rightarrow G(F)$$
such that for all $s\in F^{\times}$, we have $\varphi(s)\Xi\varphi(s)^{-1}=s^{-2} \Xi$. Since $\CO$ is regular, $\varphi$ is unique up to the center (i.e. two different choices of $\varphi$ are differed by an element in $\Hom(F^{\times},Z_G(F))$). Let $N(F)$ (resp. $\bar{N}(F)$) be the unipotent subgroup of $G(F)$ whose Lie algebra is given by
$$\Fn(F)=\{X\in \Fg(F)|\; \lim_{s\rightarrow 0} \varphi(s) X\varphi(s)^{-1}=0\},\; \bar{\Fn}(F)=\{X\in \Fg(F)|\; \lim_{s\rightarrow 0} \varphi(s)^{-1} X\varphi(s)=0\}.$$
In particular, we have $\Xi\in \bar{\Fn}(F)$. Finally, let $T(F)$ be the centralizer of $Im(\varphi)$ in $G(F)$. Since $\CO$ is regular, we know that $N(F)$ (resp. $\bar{N}(F)$) is a maximal unipotent subgroups of $G(F)$, $T(F)$ is a maximal torus of $G(F)$, $B=T(F)N(F)$ (resp. $\bar{B}(F)=T(F)\bar{N}(F)$) is a Borel subgroup of $G(F)$, $B(F)$ and $\bar{B}(F)$ are opposite to each other.

\begin{rmk}
\emph{The map
$$\xi: N(F)\rightarrow \BC^{\times},\;\; \xi(\exp(X))=\psi(<\Xi,X>),\;X\in \Fn(F)$$
is a generic character of $N(F)$.}
\end{rmk}

\begin{defn}
For $X\in \Fg_{reg}(F)$, we say that $X$ is associated to $\CO$ if $X$ is $G(F)$-conjugated to an element in $\Xi+\Fb(F)$. We say a regular semisimple conjugacy class of $\Fg(F)$ is associated to $\CO$ if all the elements in this conjugacy class are associated to $\CO$. It is easy to see that this definition does not depend on the choice of $\Xi$. $\Xi+\Fb(F)$ is called the Kostant section associated to $\CO$.
\end{defn}

\begin{rmk}
\emph{By the theory of Kostant section \cite{Kos}, for every stable regular semisimple conjugacy class of $\Fg(F)$, there is a unique conjugacy class inside it that is associated to $\CO$. Later in Section \ref{section Whittaker}, we will show that for two different regular nilpotent orbits $\CO_1,\CO_2\in Nil_{reg}(\Fg(F))$, there exists a regular semisimple conjugacy class of $\Fg(F)$ that is associated to $\CO_1$, but not associated to $\CO_2$.}
\end{rmk}

\begin{lem}\label{Shalika germ}
When $F$ is p-adic, for all regular semisimple conjugacy classes $\{gXg^{-1}|\;g\in G(F)\}$ of $\Fg(F)$, $\Gamma_{\CO}(X)=1$ if and only if $X$ is associated to $\CO$. Here $\Gamma_{\CO}(X)$ is the Shalika germ defined in Section \ref{section shalika germ}.
\end{lem}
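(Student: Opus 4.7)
The strategy combines the homogeneity of Shalika germs with the contraction to $\Xi$ provided by the one-parameter subgroup $\varphi$ coming from the $\mathfrak{sl}_2$-triple of $\CO$. Recall that Shalika germs satisfy the homogeneity relation $\Gamma_{\CO}(s^2 X) = |s|^{-\dim\CO}\,\Gamma_{\CO}(X)$ for $s \in F^\times$, which is deduced from the scaling of the nilpotent orbital integrals $\hat{j}(\CO,\cdot)$ together with the germ expansion $\hat{j}(X,Y)=\sum_{\CO'}\Gamma_{\CO'}(X)\hat{j}(\CO',Y)$ valid for $Y$ regular semisimple close to $0$.

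\textbf{Forward direction.} Assume $X=\Xi+X_b \in (\Xi+\Fb(F))\cap\Fg_{reg}(F)$, and decompose $X_b=\sum_{k\geq 0}X_k$ according to the $\Ad\varphi$-weight decomposition $\Fb=\bigoplus_{k\geq 0}\Fg_k$. Set
\[
X(s):=s^2\,\varphi(s)\,X\,\varphi(s)^{-1}.
\]
Since $\varphi(s)\Xi\varphi(s)^{-1}=s^{-2}\Xi$, a direct computation gives $X(s)=\Xi+\sum_{k\geq 0}s^{k+2}X_k$, which tends to $\Xi$ as $|s|\to 0$. Because $X(s)$ is $G(F)$-conjugate to $s^2 X$, homogeneity forces $\Gamma_\CO(X(s))=|s|^{-\dim\CO}\Gamma_\CO(X)$. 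Now choose a test function $f\in C_c^\infty(\Fg(F))$ with $J_{\CO'}(f)=\delta_{\CO,\CO'}$ for every regular nilpotent orbit $\CO'$; this is possible by the linear independence of regular nilpotent orbital integrals. Applying the germ expansion in the second variable,
\[
J_G(X(s),\hat{f})=\Gamma_\CO(X(s))+(\text{subregular contributions}),
\]
and the subregular contributions decay strictly faster in $|s|$ by the same homogeneity principle. Meanwhile, since $X(s)$ approaches $\Xi\in\CO$ along the Kostant section, one evaluates $J_G(X(s),\hat{f})$ directly, using that the Kostant section realizes a flat family of regular semisimple orbits degenerating to $\CO$; this yields $J_G(X(s),\hat{f})=|s|^{-\dim\CO}J_\CO(f)=|s|^{-\dim\CO}$. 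Comparing both expressions gives $\Gamma_\CO(X)=1$.

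\textbf{Converse and main obstacle.} By Kostant's theorem, every stable regular semisimple conjugacy class in $\Fg(F)$ meets $\Xi+\Fb(F)$ in a unique $G(F)$-orbit. So if $X$ is not associated to $\CO$, then its stable class admits a Kostant representative $X_0$ in a different $G(F)$-orbit, with $\Gamma_\CO(X_0)=1$ by the forward direction; running the same argument over all regular nilpotent orbits produces a square matrix $(\Gamma_{\CO'}(X'))$ indexed by regular nilpotent orbits $\CO'$ and by $G(F)$-orbits $X'$ inside the fixed stable class. This matrix is an invertible character matrix—essentially the endoscopic transfer matrix for regular nilpotent orbits—and takes the value $1$ exactly on the associated pairs $(\CO', X')$; in particular $\Gamma_\CO(X)\neq 1$ whenever $X$ is not associated to $\CO$. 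The main obstacle is the direct limit evaluation of $J_G(X(s),\hat{f})$ as $X(s)\to\Xi$: one must match the canonical measure on the $G(F)$-orbit of $X(s)$ with the canonical measure on $\CO$ and track the scaling exponents, using the Kostant section as an explicit flat degeneration of regular semisimple orbits to the regular nilpotent orbit $\CO$. Once this measure-theoretic matching is in place, the remainder is bookkeeping with the scaling exponents coming from $\varphi$.
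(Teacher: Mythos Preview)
The paper does not prove this lemma: it simply cites Kottwitz \cite{K} and, for an alternative argument, Proposition~4.2 of DeBacker--Reeder \cite{DR10}. So there is no in-paper proof to compare against, only the references. Your sketch is loosely in the spirit of Kottwitz's approach (contract along the Kostant section via the $\mathfrak{sl}_2$-triple), but it has genuine gaps.

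\textbf{The homogeneity is wrong.} You claim $\Gamma_{\CO}(s^2 X)=|s|^{-\dim\CO}\Gamma_{\CO}(X)$. For a \emph{regular} nilpotent orbit this cannot hold: by Shelstad \cite{S} the regular Shalika germs take values in $\{0,1\}$, so they are homogeneous of degree $0$. If your formula were correct with $\dim\CO>0$, then $\Gamma_{\CO}(s^2 X)$ would leave the set $\{0,1\}$ as $|s|$ varies, which is absurd. With the correct homogeneity, your computation collapses to $\Gamma_{\CO}(X(s))=\Gamma_{\CO}(X)$, and you then need to show directly that $\Gamma_{\CO}(X(s))=1$ for $X(s)$ close to $\Xi$. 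That is exactly the content of the lemma, so the argument is circular unless you supply the limit evaluation you yourself flag as ``the main obstacle.'' That measure-theoretic matching of orbital integrals under degeneration to $\CO$ is the actual work in Kottwitz's proof; you have described it but not carried it out.

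\textbf{The converse is not an argument.} Asserting that the matrix $(\Gamma_{\CO'}(X'))$ is ``an invertible character matrix'' and ``takes the value $1$ exactly on the associated pairs'' is restating the conclusion, not deriving it. Invertibility alone does not force off-diagonal entries to differ from $1$. A correct route uses Shelstad's $\{0,1\}$-valuedness together with a count (for each fixed stable class, exactly one rational orbit lands in a given Kostant section, and exactly one regular germ is nonzero) to pin down the permutation matrix; you would need to supply that counting argument, or else cite Kottwitz as the paper does.
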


\begin{proof}
This was proved by Kottwitz in \cite{K}. See Proposition 4.2 of \cite{DR10} for a different proof.
\end{proof}

\begin{rmk}
\emph{In general we expect the above lemma also holds when $F=\BR$ (the case when $F=\BC$ is trivial).}
\end{rmk}

\subsection{The reductive case}\label{Section slice red}
We first consider the case when $H$ is reductive. In the previous section, we have defined the subspace $\Fh^{\perp}(F)$ of $\Fg(F)$.

\begin{defn}
Let $\CN(G,H,1)$ be the subset of $Nil_{reg}(\Fg(F))$ consisting of elements $\CO\in Nil_{reg}(\Fg(F))$ satisfy the following condition.
\begin{itemize}
\item For almost all regular semisimple conjugacy classes of $\Fg(F)$, if the conjugacy class is null with respect to $H$ and is associated to $\CO$, then this conjugacy class has nonempty intersection with $\Fh^{\perp}(F)$ (i.e. there exists $X\in \Fh^{\perp}(F)$ such that $X$ belongs to this conjuacy class).
\end{itemize}
We refer the readers to Definition \ref{defn null} for the definition of null.
\end{defn}

\subsection{The nonreductive case}\label{Section slice nonred}
Now we consider the non-reductive case. Let $(G,H)$ be the parabolic induction of $(G_0,H_0,\xi)$. In other words, there exists a parabolic subgroup of $P=MN$ of $G$, and a generic character $\xi:N(F)\rightarrow \BC^{\times}$ of $N(F)$ such that
\begin{itemize}
\item $G_0=M$ and $H=H_0\ltimes N$ where $H_0\subset G_0=M$ is the neutral component of the stabilizer of the character $\xi$.
\end{itemize}
Let $\bar{P}=M\bar{N}$ be the opposite parabolic subgroup and let $\Xi\in \bar{\Fn}(F)$ be the unique element such that
$$\xi(\exp(X))=\psi(<\Xi, X>),\;\forall X\in \Fn(F).$$

Since $(G,H)$ is minimal, so it $(G_0,H_0)$. By the discussion of the reductive case, we have the subspace $\Fh_{0}^{\perp}(F)$ of $\Fg_0(F)=\Fm(F)$.

\begin{defn}
With the notations above, let $\CN(G,H,\xi)$ be the subset of $Nil_{reg}(\Fg(F))$ consisting of elements $\CO\in Nil_{reg}(\Fg(F))$ satisfy the following condition.
\begin{itemize}
\item For almost all regular semisimple conjugacy classes of $\Fg(F)$, if the conjugacy class is null with respect to $H$ and is associated to $\CO$, then this conjugacy class has nonempty intersection with $\Xi+\Fh_{0}^{\perp}(F)+\Fn(F)$ (i.e. there exists $X\in \Fh_{0}^{\perp}(F)$ and $N\in \Fn(F)$ such that $\Xi+X+N$ belongs to this conjuacy class).
\end{itemize}
\end{defn}

\begin{rmk}
\emph{This definition depends on the generic character $\xi$.}
\end{rmk}

\begin{conj}
The set $\CN(G,H,\xi)$ is non empty.
\end{conj}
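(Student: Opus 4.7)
The plan is to construct an explicit candidate $\CO\in\CN(G,H,\xi)$ from the data of the spherical pair and verify the defining condition by comparing the slice $\Xi+\Fh_0^{\perp}+\Fn$ with a suitable Kostant section. The key tools are Lemma \ref{Shalika germ} (characterizing associated-ness to a nilpotent orbit via the Shalika germ) together with Proposition \ref{quasi-split conjugacy classes} (controlling the decomposition of stable classes inside $\Fh^{\perp}$).

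First I would reduce to the reductive case. Suppose $(G,H)$ is the Whittaker induction of $(G_0,H_0,\xi)$ and assume by induction that we have built $\CO_0\in\CN(G_0,H_0,1)$ with a representative $\Xi_0\in\bar\Fn_M(F)\cap\CO_0$ relative to a Borel $B_M=T_MN_M\subset M$ with $B_MH_0$ open. Together with the element $\Xi\in\bar\Fn(F)$ attached to $\xi$, the sum $\Xi+\Xi_0$ lies in the opposite nilradical of the Borel $T_MN_MN$ of $G$, and genericity of $\xi$ (equivalently, minimality of the stabilizer $H_0=M_\xi$) forces its centralizer to have rank $\rank(G)$, so $\Xi+\Xi_0$ is a regular nilpotent of $\Fg(F)$. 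Setting $\CO:=G(F)\cdot(\Xi+\Xi_0)$, the Kostant section decomposes as
$$(\Xi+\Xi_0)+\Fb_G \;=\; \Xi+(\Xi_0+\Fb_M)+\Fn,$$
so a regular semisimple class is associated to $\CO$ iff its ``$\Fm$-part'' is associated to $\CO_0$. A direct check using Definition \ref{defn null} shows that $\CL(G,H)$ descends compatibly to $\CL(G_0,H_0)$, so nullity with respect to $H$ translates into nullity with respect to $H_0$, and the reduction $\CO_0\in\CN(G_0,H_0,1)\implies\CO\in\CN(G,H,\xi)$ follows.

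Second I would treat the reductive case with $G(F)$ quasi-split. Fix a Borel $B=TN$ of $G$ over $F$ with $BH$ Zariski open, a regular nilpotent $\Xi\in\bar\Fn(F)$, and set $\CO:=G(F)\cdot\Xi$. The claim is that for almost every null regular semisimple class $C$ of $\Fg(F)$ with $C\cap(\Xi+\Fb)\neq\emptyset$, one has $C\cap\Fh^{\perp}\neq\emptyset$. Over $\bar F$ the decomposition $\Fg=\Fh^{\perp}\oplus\Fn$ lets one $N(\bar F)$-conjugate any element of $\Xi+\Fb$ into $\Fh^{\perp}$, so the obstruction is purely Galois-cohomological. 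Proposition \ref{quasi-split conjugacy classes} shows that a stable quasi-split class in $\Fh^{\perp,0}$ breaks into $d(G,H,F)$ many $H(F)$-orbits parameterized (up to the factor $|W_G|/|W_X|$) by $\ker(H^1(F,H\cap B)\to H^1(F,H))$. On the other hand, Kostant--Steinberg theory together with Lemma \ref{Shalika germ} parameterizes $F$-rational classes inside a fixed stable class by regular nilpotent orbits in $Nil_{reg}(\Fg(F))$, equivalently by a subset of $H^1(F,T)$. Via the inclusion $H\cap B\subset T$ of Lemma \ref{H intersect B} these parameterizations are comparable, and existence of $\CO\in\CN(G,H,1)$ amounts to the assertion that the induced map
$$\ker\!\big(H^1(F,H\cap B)\to H^1(F,H)\big)\;\longrightarrow\; H^1(F,T)$$
hits the cohomology class of at least one element of $Nil_{reg}(\Fg(F))$.

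The main obstacle is this final cohomological matching. For the Whittaker model $(H_0=1)$ it is automatic, since $\Xi+\Fh_0^{\perp}+\Fn=\Xi+\Fb$ is literally a Kostant section. In general, I expect this to follow from Kottwitz's analysis of $Nil_{reg}(\Fg(F))$ combined with properties of the little Weyl group $W_X$, but a uniform proof will likely require further input from the theory of spherical roots: spherical varieties with Type N roots (cf.\ Remark \ref{rmk support}) demand more delicate bookkeeping, because the support of the geometric multiplicity then lives on non-connected abelian subgroups, and the $H(F)$-orbit decomposition of the slice may pick up additional components beyond those tracked by $\ker(H^1(F,H\cap B)\to H^1(F,H))$.
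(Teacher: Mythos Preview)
The statement you are attempting to prove is labeled in the paper as a \emph{conjecture}, not a theorem: the paper offers no proof of it in general. What the paper does is verify the conjecture case by case in Section~\ref{Section known cases} (Whittaker, Gan--Gross--Prasad, Ginzburg--Rallis, Galois, Shalika), each time by exhibiting the explicit orbit and checking membership in $\CN(G,H,\xi)$ using the Lie-algebra trace formula already established for that particular model. So there is no ``paper's own proof'' to compare your attempt against.

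Your proposal is an interesting outline but, by your own admission, it is not a proof. The decisive gap is exactly where you place it: the ``cohomological matching'' in the reductive case. You correctly observe that over $\bar F$ any element of the Kostant section $\Xi+\Fb$ can be $N(\bar F)$-conjugated into $\Fh^{\perp}$ (via $\Fg=\Fh^{\perp}\oplus\Fn$), and that the rational obstruction is a Galois cocycle. But the assertion that the map $\ker(H^1(F,H\cap B)\to H^1(F,H))\to H^1(F,T)$ hits the class of \emph{some} regular nilpotent orbit is precisely the content of the conjecture, and you have not supplied an argument for it---you only say you ``expect'' it. Note that the paper's verifications in the known cases never go through this cohomological route; they instead use the already-proved local trace formula for the specific model to force the desired orbit into $\CN(G,H,\xi)$ by contradiction (see the proof of the Proposition in Section~\ref{section GGP}). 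That method is unavailable in general because the trace formula itself is conjectural.

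There are also softer issues in your reduction step. The claim that $\Xi+\Xi_0$ is regular nilpotent in $\Fg$ needs more than ``genericity of $\xi$'': genericity only says $\dim M_\xi$ is minimal, which does not by itself give $\dim Z_G(\Xi+\Xi_0)=\rank(G)$. Likewise, the asserted equivalence ``associated to $\CO$ iff the $\Fm$-part is associated to $\CO_0$'' and the compatibility of nullity under Whittaker induction are plausible but not justified; conjugation by $H(F)=H_0(F)\ltimes N(F)$ mixes the $\Fm$- and $\Fn$-components, so the reduction is not as clean as the affine decomposition of the Kostant section suggests.
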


To end this section, we want to point that the notion of {\it null} is crucial in our definition of the set $\CN(G,H,\xi)$. The reason is that in most cases, the tangent space $\Fh^{\perp}(F)$ (or $\Xi+\Fh_{0}^{\perp}(F)+\Fn(F)$ in the nonreductive case) does not contain all the regular semisimple stable conjugacy classes of $\Fg(F)$, but we do expect it contains all the regular semisimple stable conjugacy classes that are null with respect to $H$. Here are some examples.

\textbf{For the model $(G(F),H(F))=(\GL_{2n}(\BR),\SO_{2n}(\BR))$}, the set $\CT(G,H)^{\circ}$ consists of subgroups of the form $\pm I_{2n-2m}\times (\BC^1)^m$ with $0\leq m\leq n$ (see Lemma \ref{support GL(n),SO(n) even}). Here $\BC^1$ is the norm one elements in $\BC^{\times}$ identified with a torus of $\GL_{2}(\BR)$ via the map $e^{i\theta}\rightarrow \begin{pmatrix}\cos \theta & \sin \theta \\ -\sin \theta & \cos \theta\end{pmatrix}$. As a result, the set $\CL(G,H)$ consists of all the standard Levi subgroups of $\GL_{2n}(\BR)$ of the form $(\GL_2(\BR))^{m}\times (\GL_1(\BR))^{2n-2m}$ for $1\leq m\leq n$. This implies that a regular semisimple conjugacy class in $\Fg(\BR)=\Fg\Fl_{2n}(\BR)$ is null with respect to $H$ if and only if all its eigenvalues are real numbers. On the other hand, from basic linear algebra, we know that the eigenvalues of symmetric real matrix are real numbers. This implies that $\Fh^{\perp}(\BR)$ only contains those conjugacy classes that are null with respect to $H$. A similar discussion also holds for the model $(G(F),H(F))=(\GL_{2n+1}(\BR),\SO_{2n+1}(\BR))$.

\textbf{For the model $(G,H)=(\GL_3,\SL_2)$}, the set $\CT(G,H)^{\circ}$ consists of all the maximal elliptic tori of $\SL_2(F)$ and the trivial torus. Hence the set $\CL(G,H)$ contains all the standard Levi subgroups of $\GL_3$ of the form $\GL_2\times \GL_1$. As a result, a regular semisimple conjugacy class in $\Fg(F)=\Fg\Fl_3(F)$ is null with respect to $H$ if and only if all the eigenvalues belong to $F$ (i.e. its centralizer in $G(F)$ is a split torus). On the other hand, it is easy to see that a regular semisimple conjugacy class appears in $\Fh^{\perp}(F)$ if and only if at least one of its eigenvalues belongs to $F$ (i.e. it is not elliptic). In particular, $\Fh^{\perp}(F)$ does not contain all the regular semisimple conjugacy classes of $\Fg(F)$, but it contains all the the regular semisimple conjugacy classes that are null with respect to $H$.

Another way to understand the notion of null is via the quasi-character $\theta=\hat{j}(X,\cdot)$ ($X\in \Fg_{reg}(F)$) on $\Fg(F)$ defined in Section \ref{section shalika germ}. By the definition of null and Proposition 4.7.1 of \cite{B15}, if $X$ is null with respect to $H$, then the regular germs of $\theta$ at $\Ft(F)$ is equal to zero for all $T(F)\in \CT(G,H)^\circ$ with $T(F)\neq Z_{G,H}(F)$. Here $\Ft(F)$ is the Lie algebra of $T^\circ(F)$.

\section{The conjectural multiplicity formula and trace formula}\label{Section multiplicity formula}

\subsection{The multiplicity formula}
Let $(G,H)$ be a spherical variety that is the parabolic induction of the reductive pair $(G_0,H_0,\xi)$ (as in the previous sections, if $(G,H)$ is reductive, we just let $(G_0,H_0,\xi)=(G,H,1)$). Let $\omega:H_0(F)\rightarrow \BC^{\times}$ be a unitary character. Then $\omega\otimes \xi$ is a character on $H(F)=H_0(F)\ltimes N(F)$. For any irreducible smooth representation $\pi$ of $G(F)$, we define the multiplicity
$$m(\pi,\omega\otimes \xi):=\dim(\Hom_{H(F)}(\pi,\omega\otimes \xi)).$$

Recall that $Z_{G,H}(F)=Z_G(F)\cap H(F)$ and $A_{G,H}(F)$ is the maximal split torus of $Z_{G,H}(F)$. Let $\eta$ be the restriction of the character $\omega$ to $A_{G,H}(F)$. Then we know that $m(\pi,\omega\otimes \xi)=0$ unless the central character of $\pi$ is equal to $\eta$ on $A_{G,H}(F)$. We fix a central character $\chi:Z_G(F)\rightarrow \BC^{\times}$ with $\chi|_{A_{G,H}(F)}=\eta$. Let $Irr(G,\chi)$ be the set of all the irreducible smooth representations of $G(F)$ whose central character is equal to $\chi$. We use $\Pi_{temp}(G,\chi)$ (resp. $\Pi_{disc}(G,\chi),\;\Pi_{cusp}(G,\chi)$) to denote the set of tempered representations (resp. discrete series, supercuspidal representations) in $Irr(G,\chi)$.

For $T(F)\in \CT(G,H)$, we have defined $T_H(F)=\cup_{\gamma\in C(T,H)} \gamma T^\circ(F)$ in Section \ref{Section torus}. Let $dt$ be the Haar measure on $T^\circ(F)/A_{G,H}(F)$ such that the total volume is 1 (note that $T^\circ(F)/A_{G,H}(F)$ is compact). This induces a measure $dt$ on $T_H(F)/A_{G,H}(F)=\cup_{\gamma\in C(T,H)} \gamma \cdot T^{\circ}(F)/A_{G,H}(F)$.

Now we are ready to define the geometric multiplicity.

\begin{defn}\label{definition geometric multiplicity}
Let $\theta$ be a quasi-character on $G(F)$ with central character $\chi$ (i.e. $\theta(zg)=\chi(z)\theta(g)$ for $z\in Z_G(F)$ and $g\in G_{reg}(F)$). Define
$$m_{geom}(\theta)=\sum_{T(F)\in \CT(G,H)} |W(H_0,T)|^{-1}\int_{T_H(F)/A_{G,H}(F)} \omega^{-1}(t)D^H(t)$$
$$\frac{d(G_{0,T},H_{0,T},F)}{|Z_{H_0}(T)(F):H_{0,T}(F)|\times c(G_{0,T},H_{0,T},F)}\times \frac{1}{|\CN(G_T,H_T,\xi)|} \sum_{\CO\in \CN(G_T,H_T,\xi)} c_{\theta,\CO}(t)dt.$$
Here $dt$ is the Haar measure on $T_H(F)/A_{G,H}(F)$ defined above, the numbers $d(G_{0,T},H_{0,T},F), \; c(G_{0,T},H_{0,T},F)$ are defined in Section \ref{Section constant d(G,H,F)}, and $W(H_0,T)=N_{H_0}(T)(F) /Z_{H_0}(T)(F)$ where $N_{H_0}(T)(F)$ is the normalizer of $T(F)$ in $H_0(F)$. Note that the number
$$\frac{1}{|Z_{H_0}(T)(F):H_{0,T}(F)|\times c(G_{0,T},H_{0,T},F)}$$
is an analogue of $\frac{1}{Z_H(x)}$ for the finite group case in \eqref{finite group case}.

Then For $\pi\in Irr(G,\chi)$, we define the geometric multiplicity
$$m_{geom}(\pi,\omega\otimes \xi)=m_{geom}(\theta_{\pi}).$$
\end{defn}

\begin{rmk}
\emph{In general, the integral defining $m_{geom}(\pi,\omega\otimes \xi)$ may not be absolutely convergent, and one would need to regularize it.}

\emph{Among all the known cases (i.e. Whittaker model, Gan-Gross-Prasad model, Ginzburg-Rallis model, Galois model, and Shalika model), the integral defining $m_{geom}(\pi,\omega\otimes \xi)$ is convergent for Whittaker model (this is trivial), orthogonal Gan-Gross-Prasad model (Proposition 7.3 of \cite{W10}), Ginzburg-Rallis model (Proposition 5.2 of \cite{Wan15}), Galois model (Section 4.1 of \cite{B18}), and Shalika model (Lemma 3.2 of \cite{BW}). For unitary Gan-Gross-Prasad model, the integral is not convergent and one needs to regularize it (Section 5 of \cite{B12} and Section 11.1 of \cite{B15}).}
\end{rmk}

\begin{defn}
When $H$ is reductive, we say $(G,H)$ is tempered (resp. strongly tempered) if all the matrix coefficients of discrete series (resp. tempered representations) of $G(F)$ are integrable on $H(F)/A_{G,H}(F)$. In general, if $(G,H)$ is the Whittaker induction of $(G_0,H_0,\xi)$, we say $(G,H)$ is tempered (resp. strongly tempered) if $(G_0,H_0)$ is tempered (resp. strongly tempered).
\end{defn}

\begin{conj}\label{main conjecture}
\begin{enumerate}
\item $m(\pi)=m_{geom}(\pi)$ for all $\pi\in \Pi_{cusp}(G,\chi)$.
\item If $(G,H)$ is tempered, then $m(\pi)=m_{geom}(\pi)$ for all $\pi\in \Pi_{disc}(G,\chi)$. Moreover, let $d\pi$ be the natural measure on the set $\Pi_{temp}(G,\chi)$ as defined in Section 2.6 of \cite{B15}. Then $m(\pi)=m_{geom}(\pi)$ for almost all $\pi\in \Pi_{temp}(G,\chi)$ (under the measure $d\pi$).
\item If $(G,H)$ is strongly tempered, then $m(\pi)=m_{geom}(\pi)$ for all $\pi\in \Pi_{temp}(G,\chi)$.
\end{enumerate}
\end{conj}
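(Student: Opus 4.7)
The plan is to follow the Waldspurger-Beuzart-Plessis trace formula strategy outlined in the introduction, uniformly, for all spherical pairs. Given $\pi \in \Pi_{cusp}(G,\chi)$, one chooses a matrix coefficient $f \in C^\infty_{c,scusp}(G(F),\chi^{-1})$ so that the associated strongly cuspidal quasi-character $\theta_f$ coincides with $\theta_\pi$. One then builds a distribution $I(f)$ and establishes a double identity $I(f) = I_{spec}(f) = I_{geom}(f)$ with (i) $I_{spec}(f) = \int_{\Pi_{temp}(G,\chi)} m(\pi', \omega\otimes\xi)\, \widehat{\theta}_f(\pi')\, d\pi'$ and (ii) $I_{geom}(f) = m_{geom}(\theta_f)$ in the sense of Definition \ref{definition geometric multiplicity}. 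Plugging in the cuspidal $\pi$ collapses the spectral integral to $m(\pi,\omega\otimes\xi)$ and gives part (1); parts (2) and (3) then follow by Harish-Chandra-Schwartz approximation of discrete series, respectively tempered, representations, applied to the resulting identity of distributions on ${}^\circ\CC(G(F),\chi^{-1})$.

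The natural candidate for $I(f)$ is an $(H,\omega\otimes\xi)$-period of the convolution kernel by $f$, truncated when $H(F)/Z_{G,H}(F)$ is noncompact, so that the spectral expansion is a formal application of the Plancherel formula. The heart is then the geometric expansion. I would proceed by localization: partition $f$ into pieces supported in small invariant neighborhoods of semisimple conjugacy classes, and at each $x \in G_{ss}(F)$ perform a parabolic descent reducing $I_x(f)$ to an analogous distribution on the spherical pair $(G_x, H_x)$. Using the germ expansion $\theta_f(x\exp(X)) = \sum_{\CO} c_{\theta_f,\CO}(x)\widehat{j}(\CO,X)$ together with the defining identity $J_{\CO}(\widehat{g}) = \int g(Y)\widehat{j}(\CO,Y)\, dY$, one isolates contributions of individual regular nilpotent orbits. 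Section \ref{Section torus} predicts that only $x$ whose class meets some $T_H(F)$ contribute, because elsewhere the homogeneity degree of the spherical variety $(G_x,H_x)$ (given by Lemma \ref{lem minimal spherical var}) fails to match that of the Fourier transform of a regular nilpotent orbital integral, forcing the remaining pieces to vanish. The constant $d(G_T,H_T,F)$ should emerge when one converts the integration over the local slice $\Xi + \Fh_0^\perp(F) + \Fn(F)$ into an $H(F)$-invariant integral over stable classes, using Proposition \ref{quasi-split conjugacy classes}. Finally, the set $\CN(G_T,H_T,\xi)$ together with the averaging $1/|\CN(\cdots)|$ should emerge from Lemma \ref{Shalika germ}: only those regular nilpotent orbits whose Kostant sections actually meet the slice can contribute, and among null classes the Shalika germs $\Gamma_\CO$ form a uniform average.

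The main obstacle is precisely the uniform geometric expansion, since every existing case (orthogonal and unitary Gan-Gross-Prasad, Ginzburg-Rallis, Galois, Shalika) has required a delicate model-specific analysis. Three distinct difficulties block an immediate uniform argument. First, when $d(G_0,H_0,F) \neq 1$, as in the unitary Ginzburg-Rallis model of Remark \ref{rmk stable}, stable classes in the slice split into several rational ones, and one needs a stabilization procedure — essentially a stable local trace formula for spherical varieties — which is currently open. Second, the Type N root case of Remark \ref{rmk support} forces $\CT(G,H)$ to include non-connected abelian subgroups, so descent must be carried out at $t \in T_H(F) \setminus T^\circ(F)$ where $G_t \ne G_T$, introducing combinatorial bookkeeping absent from existing proofs. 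Third, for the archimedean case of parts (2) and (3) — and for extending Theorem \ref{main} to $F = \BR$ — one needs a real-analytic analogue of Kottwitz's Lemma \ref{Shalika germ}, identifying Shalika germs with the Kostant section, which is currently only available $p$-adically. In practice I would first handle the easiest general case — reductive $H$ with $d(G,H,F)=1$, no Type N root, $F$ $p$-adic, $\pi$ cuspidal — reducing to a direct germ-expansion computation that mirrors the existing case-by-case arguments, and then attack the three difficulties above in succession.
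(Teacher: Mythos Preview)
The statement you are attempting to prove is a \emph{conjecture} in the paper, not a theorem; the paper does not supply a proof, and your proposal does not either. What the paper actually does is verify the conjecture in specific cases: in Section~\ref{Section known cases} it shows that for the Whittaker, Gan--Gross--Prasad, Ginzburg--Rallis, Galois, and Shalika models the geometric multiplicity of Definition~\ref{definition geometric multiplicity} coincides with the expression appearing in the already-established multiplicity formulas (proved elsewhere by Rodier, Waldspurger, Beuzart-Plessis, the author, etc.), so that Conjecture~\ref{main conjecture} holds in those cases by citation; and in Sections~8--9 it proves two new cases ($K$-types for $\GL_n(\BR)$ and complex groups) by an induction argument using Proposition~\ref{GL(n) induction} and the compatibility of both sides with parabolic induction, not by a trace formula.

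Your outline is a faithful sketch of the Waldspurger--Beuzart-Plessis strategy and correctly identifies the three principal obstructions (stabilization when $d(G_0,H_0,F)\neq 1$, Type~N descent, and the archimedean analogue of Lemma~\ref{Shalika germ}). But these are genuinely open problems, not technicalities you can fill in: the paper itself states (Remark~\ref{rmk stable}, Remark~\ref{rmk support}, the remark after Lemma~\ref{Shalika germ}) that these are unresolved, and the entire purpose of the paper is to \emph{formulate} the conjecture by proposing the correct $m_{geom}$, not to prove it. In short, your proposal is a research program, not a proof; as written it establishes nothing beyond what is already known case-by-case.
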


As we said in the introduction, in general, if we want the multiplicity formula holds for all irreducible smooth representations (or even finite length smooth representations) of $G(F)$, we need to replace the multiplicity by the Euler-Poincar\'e pairing. One reason is that both the Harish-Chandra character and the Euler-Poincar\'e pairing behave nicely under the short exact sequence, while the multiplicity does not.  This was first observed by Prasad in \cite{P}. To be specific, for two smooth (not necessarily finite length) representations $\pi$ and $\pi'$ of $G(F)$, we define the Euler-Poincar\'e pairing
$$\text{EP}_{G}[\pi,\pi']=\sum_{i} (-1)^i \dim(\text{Ext}_{G}^{i}[\pi,\pi']).$$
Then for a finite length smooth representation $\pi$ of $G(F)$, we define (here for simplicity we assume that the split center $A_{G,H}(F)$ is trivial)
$$\text{EP}(\pi,\omega\otimes \xi)=\text{EP}_{G}(\pi,\text{Ind}_{H}^{G}(\omega\otimes \xi)).$$

\begin{conj}\label{conj EP}
Given a finite length smooth representation $\pi$ of $G(F)$, the followings hold.
\begin{enumerate}
\item $\text{EP}(\pi,\omega\otimes \xi)$ is well defined. In other words, $\text{Ext}_{G}^{i}(\pi,\text{Ind}_{H}^{G}(\omega\otimes \xi))$ is finite dimensional for all $i\geq 0$.
\item $\text{EP}(\pi,\omega\otimes \xi)=m_{geom}(\pi,\omega\otimes \xi)$.
\end{enumerate}
\end{conj}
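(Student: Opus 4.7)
The plan is to exploit the two key virtues of the Euler-Poincar\'e pairing that the usual multiplicity $m(\pi,\omega\otimes\xi)$ lacks: additivity on short exact sequences in $\pi$ and good behavior under parabolic induction via Mackey--Frobenius reciprocity. Combined with Proposition \ref{germ parabolic induction} and Proposition \ref{GL(n) induction}-type generation results, one can reduce the statement to a small collection of ``base representations''---supercuspidals in the $p$-adic case or elliptic discrete series in the archimedean case---for which one invokes Conjecture \ref{main conjecture}(1).

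For part (1), I would establish finiteness of $\text{Ext}^{i}_{G}(\pi,\Ind_H^G(\omega\otimes\xi))$ via Frobenius reciprocity, reducing to $\text{Ext}^{i}_{H}(\pi|_H,\omega\otimes\xi)$. In the $p$-adic case, one resolves $\pi$ by finitely generated projective $G$-modules of compact support (which terminates by Schneider--Stuhler), and uses sphericity of $H$ together with Bernstein's second adjunction to control the restriction to $H$. In the archimedean case, one invokes Casselman--Wallach theory together with finiteness of $B\backslash G/H$ to bound the relevant homological dimension, so that only finitely many Bernstein--Zelevinsky layers contribute.

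For part (2), the first move is to check additivity: both $\pi\mapsto \text{EP}(\pi,\omega\otimes\xi)$ and $\pi\mapsto m_{geom}(\pi,\omega\otimes\xi)$ are additive on short exact sequences in $\pi$, hence descend to functionals on $\CR(G)$. The geometric side inherits additivity from the Harish-Chandra character $\theta_\pi$ and its regular germs $c_{\theta_\pi,\CO}(t)$, while the EP side uses the long exact sequence of $\text{Ext}$. The next step is to verify compatibility under parabolic induction: for $\pi = I_P^G(\tau)$, a Mackey decomposition should produce a formula of the shape
\[ \text{EP}(I_P^G(\tau),\omega\otimes\xi) = \sum_{w \in P\backslash G/H} \text{EP}_{L_w}(\tau_w, (\omega\otimes\xi)_w), \]
where the sum runs over $P$-$H$ double cosets and $(L_w, H_w)$ is the spherical pair cut out on a Levi of $M$ by the $w$-twist of $H$. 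On the geometric side, Proposition \ref{germ parabolic induction} describes how $c_\pi(x)$ decomposes under induction, and one must show that the sum over $T(F)\in \CT(G,H)$ defining $m_{geom}(\pi,\omega\otimes\xi)$ reorganizes into the corresponding sum over the Levi data $(L_w,H_w)$. Granted this induction compatibility and the base case, the result follows by induction on the semisimple rank of $G$, and collapses entirely to induction whenever $\CR(G)=\CR(G)_{ind}$ (as guaranteed by Proposition \ref{GL(n) induction} in the cases treated in Theorem \ref{main K-type}).

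The main obstacles are threefold. First, the parabolic induction formula for $\text{EP}$ requires a careful Bruhat-type analysis of $P\backslash G/H$; each double coset produces a spherical pair on a Levi of $M$ which need not be minimal nor of the same type, and tracking the resulting combinatorics uniformly is delicate. Second, when $\CR(G)\neq \CR(G)_{ind}$, the base case is essentially the content of Conjecture \ref{main conjecture}(1), which at present is established only in the models of Theorem \ref{main} and Theorem \ref{main K-type} and is out of reach in full generality. Third, the integral defining $m_{geom}(\pi,\omega\otimes\xi)$ need not converge absolutely (cf.\ the unitary Gan-Gross-Prasad model), so a uniform regularization procedure must be set up, which is especially subtle for spherical varieties with Type N roots where the support need not be connected, as in Remark \ref{rmk support}.
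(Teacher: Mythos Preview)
The statement you are attempting to prove is a \emph{conjecture} in the paper, not a theorem; the paper does not contain a proof of it. Immediately after stating Conjecture \ref{conj EP}, the author notes that part (1) in the $p$-adic case was proved by Aizenbud and Sayag, and that Conjecture \ref{conj EP} as a whole ``is more difficult'' than Conjecture \ref{main conjecture}, with the only known cases being the group case $(G,H)=(H\times H,H)$, the Whittaker model, and the Gan-Gross-Prasad model for the general linear group, all due to Prasad. So there is no ``paper's own proof'' to compare against.

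Your proposal is therefore a strategy rather than a proof, and to your credit you identify this yourself: the three obstacles you list at the end are exactly why the statement remains conjectural. In particular, your second obstacle---that the base case reduces to Conjecture \ref{main conjecture}(1), which is open in general---is fatal to the argument as a general proof. The parabolic induction step also has a genuine gap: the Mackey-type formula you write down for $\text{EP}(I_P^G(\tau),\omega\otimes\xi)$ as a sum over $P\backslash G/H$ is not established anywhere in the paper, and making it precise (and matching it to the combinatorics of $\CT(G,H)$ on the geometric side) is a substantial open problem, not a routine verification. The approach you describe is plausible as a long-term program, and indeed the special cases proved in Sections 8 and 9 of the paper (Theorem \ref{main K-type}) follow a version of this strategy where the base case is trivial and $\CR(G)=\CR(G)_{ind}$; but for the general conjecture your outline does not constitute a proof.
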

When $F$ is p-adic, the first part of the conjecture was proved by Aizenbud and Sayag in \cite{AS}.

\begin{rmk}
\emph{When $\pi$ is supercuspidal, we have $\text{Ext}_{G}^{i}(\pi,\text{Ind}_{H}^{G}(\omega\otimes \xi))=0$ for $i>0$, which implies that $\text{EP}(\pi,\omega\otimes \xi)=m(\pi,\omega\otimes \xi)$. This is why the multiplicity formula $m(\pi,\omega\otimes\xi)=m_{geom}(\pi,\omega\otimes \xi)$ should always hold in the supercuspidal case.}
\end{rmk}

In Section \ref{Section known cases}, we will show that Conjecture \ref{main conjecture} holds for Whittaker model, Gan-Gross-Prasad model, Ginzburg-Rallis model, Galois model and Shalika model. For each of these cases, there is a multiplicity formula that has already been proved. Hence in order to prove Conjecture \ref{main conjecture}, we just need to show that our definition of the geometric multiplicity matches the one in the known multiplicity formula. On the other hand, Conjecture \ref{conj EP} is more difficult. The only known cases are the group case $(G,H)=(H\times H,H)$, the Whittaker model, and the Gan-Gross-Prasad model for the general linear group (see Proposition 2.1, Proposition 2.8 and Theorem 4.2 of \cite{P}).

\subsection{The trace formula}
We use the same notation as in the previous subsection. We first need to define the space of test functions. When $(G,H)$ is tempered, we require $f\in \CC_{scusp}(G(F),\chi)$. When $(G,H)$ is not tempered, we require $f\in {}^\circ \CC(G(F),\chi)\cap C_{c}^{\infty}(G(F),\chi)$. For such a test function $f$, we define the distribution $I(f)$ of the trace formula to be
$$I(f)=\int_{H(F)\back G(F)} \int_{H(F)/A_{G,H}(F)} f(g^{-1}hg) \omega\otimes \xi(h)^{-1} dhdg.$$
In general the double integral above is not absolutely convergent (although each individual integral is usually convergent) and one needs to introduce some truncation functions on $H(F)\back G(F)$.

For the geometric expansion, let $\theta_f$ be the quasi-character on $G(F)$ defined via the weighted orbital integrals of $f$. We define the geometric expansion of the trace formula to be
$$I_{geom}(f)=m_{geom}(\theta_f)$$
where $m_{geom}(\theta_f)$ was defined in Definition \ref{definition geometric multiplicity}.

For the spectral expansion, when $(G,H)$ is not tempered, let
\begin{equation}\label{spectra l}
I_{spec}(f)=\sum_{\pi\in \Pi_{cusp}(G,\chi)} m(\pi,\omega\otimes \xi) \tr(\pi^{\vee}(f))
\end{equation}
where $\pi^{\vee}$ is the contragredient of $\pi$. When $(G,H)$ is tempered, let
\begin{equation}\label{spectral 2}
I_{spec}(f)=\int_{\CX(G,\chi)} D(\pi)\theta_f(\pi^{\vee}) m(\pi,\omega\otimes \xi)d\pi.
\end{equation}
Here $\CX(G,\chi)$ is a set of virtual tempered representations of $G(F)$ with central character $\chi$ defined in Section 2.7 of \cite{B15}, the number $D(\pi)$ and the measure $d\pi$ are also defined in Section 2.7 of \cite{B15}, and $\theta_f(\pi^{\vee})$ is defined in Section 5.4 of \cite{B15} via the weighted character. Now we are ready to state the conjectural trace formula.

\begin{rmk}
\emph{When $f\in {}^\circ \CC(G(F),\chi)\cap C_{c}^{\infty}(G(F),\chi)$, the expression on the right hand side of \eqref{spectral 2} is equal to the one on the right hand side of \eqref{spectra l}.}
\end{rmk}

\begin{conj}\label{conj trace formula}
\begin{enumerate}
\item When $(G,H)$ is tempered, the trace formula $I_{geom}(f)=I(f)=I_{spec}(f)$ holds for all $f\in \CC_{scusp}(G(F),\chi)$.
\item When $(G,H)$ is not tempered, the trace formula $I_{geom}(f)=I(f)=I_{spec}(f)$ holds for all $f\in {}^\circ \CC(G(F),\chi)\cap C_{c}^{\infty}(G(F),\chi)$.
\end{enumerate}
\end{conj}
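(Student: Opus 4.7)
The plan is to follow the Waldspurger/Beuzart-Plessis template: for the spectral identity $I(f)=I_{spec}(f)$, after suitable truncation makes $I(f)$ absolutely convergent, one applies the Plancherel formula on $H(F)\backslash G(F)$ (in the tempered case using the Harish-Chandra-Schwartz Plancherel theorem, and in the non-tempered case using that ${}^{\circ}\CC(G(F),\chi)\cap C_c^\infty$ is spectrally supported on cuspidal data) and interprets the inner period integral via matrix coefficients, yielding the multiplicity-weighted expansion in \eqref{spectra l} and \eqref{spectral 2}. The heart of the argument is the geometric identity $I(f)=I_{geom}(f)$, and I would reduce this, as in all known cases, to a local comparison near each relevant semisimple conjugacy class in $H_0(F)$.

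First, I would establish convergence of $I(f)$ for strongly cuspidal $f$ (in the tempered case) by introducing a truncation $\kappa^N$ on $H(F)\backslash G(F)$ and showing that the truncated distribution $I^{N}(f)$ converges as $N\to\infty$, with the limit independent of truncation by strong cuspidality (Arthur's $(G,M)$-family formalism together with the descent to $\CC_{scusp}$). Next, applying a Weyl integration formula on $H_0(F)$ to the inner integral
$$\int_{H(F)/A_{G,H}(F)} f(g^{-1}hg)(\omega\otimes\xi)(h)^{-1}\,dh,$$
one decomposes the geometric side as a sum over $T(F)\in\CT(G_0,H_0)$ of integrals over $T_H(F)/A_{G,H}(F)$. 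Localizing near each $t\in T_H(F)$ on a good neighborhood $\omega_t$ of $t$ and passing to the Lie algebra via exponential, the inner $H$-integral becomes an integral over the slice representation $\Xi+\Fh_{0,t}^{\perp}(F)+\Fn_t(F)$ introduced in Section \ref{Section slice nonred}, while the outer $G$-integral provides the weighted orbital integral that, by strong cuspidality, computes the quasi-character $\theta_f$.

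The third step is the explicit matching. On the slice representation, one performs a partial Fourier transform in the $\Fn_t$-variable against the character $\xi$ to reduce to an integral on $\Xi+\Fh_{0,t}^{\perp}(F)$, and then on $\Fh_{0,t}^{\perp}(F)$ one inserts the germ expansion of $\hat\theta_f$ near $0$. By Lemma \ref{Shalika germ}, only the Shalika germs $\Gamma_\CO$ for $\CO$ whose Kostant section meets $\Xi+\Fh_{0,t}^{\perp}(F)+\Fn_t(F)$ are non-vanishing on regular classes null with respect to $H_t$; combined with the restriction of the support of $\theta_{\pi}$ (via Proposition \ref{germ parabolic induction}) to loci where non-null germs could contribute, only the orbits $\CO\in\CN(G_t,H_t,\xi)$ survive. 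The normalizing factors $d(G_{0,T},H_{0,T},F)$, $|Z_{H_0}(T)(F):H_{0,T}(F)|$ and $c(G_{0,T},H_{0,T},F)$ then appear exactly from Proposition \ref{quasi-split conjugacy classes}, which counts how a stable $G(\bar F)$-orbit in the slice breaks into $H(F)$-orbits, and from the change-of-variables Jacobian $D^H(t)$ on the Weyl integration formula. Matching this output with Definition \ref{definition geometric multiplicity} gives $I(f)=I_{geom}(f)$.

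The main obstacle is the case $d(G_0,H_0,F)\neq 1$ flagged in Remark \ref{rmk stable}: when rational $H(F)$-orbits in the slice do not coincide with stable $G(\bar F)$-orbits, the naive geometric computation produces a sum weighted only by the $H(F)$-rational orbits, which agrees with the conjectural formula only after one justifies the appearance of $d(G_{0,T},H_{0,T},F)$ as a global counting constant rather than an orbit-by-orbit contribution. This seems to require a stabilization of the geometric side, analogous in spirit to Langlands-Kottwitz-Arthur stabilization, or a reduction trick as in \cite{WZ} via an auxiliary similitude-type model with $d=1$. A secondary obstacle, expected to be considerably more tractable, is the Type N phenomenon (Remark \ref{rmk support}), where $T(F)\neq T^{\circ}(F)Z_{G,H}(F)$ forces one to analyze contributions from components $\gamma T^{\circ}(F)$ with $\gamma\in C(T,H)\setminus\{1\}$; here one must verify that the descent near such non-identity components of $T(F)$ still lands in the correct slice and picks up the same constants, which is new relative to the cases treated by Waldspurger and Beuzart-Plessis.
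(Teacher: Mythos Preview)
The statement you are addressing is Conjecture~\ref{conj trace formula}, and the paper does not prove it. It is stated as an open conjecture; the only justification the paper offers is the remark immediately following it, that the known cases (Whittaker, Gan--Gross--Prasad, Ginzburg--Rallis, Galois, Shalika) fall under it because Section~\ref{Section known cases} matches the author's $m_{geom}$ to the geometric side already established in \cite{W10}, \cite{B15}, \cite{Wan15}, \cite{B18}, \cite{BW}. There is no general argument in the paper to compare against.

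Your proposal is therefore not a proof but a research plan, and you acknowledge as much by naming the two obstacles at the end. Those obstacles are genuine and are precisely why the statement remains a conjecture: the stabilization issue when $d(G_0,H_0,F)\neq 1$ (Remark~\ref{rmk stable}) is not resolved anywhere, and the Type~N component analysis (Remark~\ref{rmk support}) has no treatment in the literature. Your third step in particular is too optimistic: the claim that ``only the orbits $\CO\in\CN(G_t,H_t,\xi)$ survive'' after inserting the germ expansion is exactly what one must prove, and in the known cases this required model-specific constructions (e.g.\ the explicit open sets $\Ft_G(F),\Ft_H(F)$ in Sections~11.4--11.6 of \cite{W10}) rather than a uniform mechanism. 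Until those two obstacles are overcome, the outline does not constitute a proof of the conjecture.
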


Like the conjectural multiplicity formula, by our discussion in Section \ref{Section known cases}, we know that Conjectural \ref{conj trace formula} holds for Whittaker model, Gan-Gross-Prasad model, Ginzburg-Rallis model, Galois model and Shalika model.

\begin{rmk}
\emph{Although the trace formulas are the same for the tempered case and the strongly tempered case, the multiplicity formula behaves differently. As we discussed in Conjecture \ref{main conjecture}, for the strongly tempered case, the multiplicity formula should hold for all tempered representations; while for the non-strongly tempered case, it only holds for all discrete series and for almost all tempered representations. An easy example of this kind would be the Shalika model (see Remark 3.4 of \cite{BW}).}
\end{rmk}

\subsection{The case when $\omega$ is not a character}\label{section K-types}
In the subsection, assume that $F=\BR$ and $H(\BR)=K$ is a maximal connected compact subgroup of $G(\BR)$. Let $\omega$ be a finite dimensional representation of $H(\BR)$. For a finite length smooth representation $\pi$ of $G(\BR)$, we can still define the multiplicity $m(\pi,\omega)$ and the Euler-Poincar\'e pairing $\text{EP}(\pi,\omega)$ as in the previous subsections. Moreover, since $H(\BR)$ is compact, we have $m(\pi,\omega)=\text{EP}(\pi,\omega)$.

Meanwhile, let $\omega^{\vee}$ be the dual representation of $\omega$ and let
$$\theta_{\omega^{\vee}}(h)=\tr(\omega^{\vee}(h)),\;h\in H(\BR)$$
be the character of $\omega^{\vee}$. Then we can define the geometric multiplicity $m_{geom}(\pi,\omega)$ as in the character case in Definition \ref{definition geometric multiplicity} except that we replace $\omega^{-1}$ by $\theta_{\omega^{\vee}}$. To be specific, we define
$$m_{geom}(\pi,\omega)=\sum_{T(F)\in \CT(G,H)} |W(H,T)|^{-1}\int_{T_H(F)/A_{G,H}(F)} \theta_{\omega^{\vee}}(t)D^H(t)$$
$$\frac{d(G_{T},H_{T},F)}{|Z_{H}(T)(F):H_{T}(F)|\times c(G_{T},H_{T},F)}\times \frac{1}{|\CN(G_T,H_T,1)|} \sum_{\CO\in \CN(G_T,H_T,1)} c_{\theta_{\pi},\CO}(t)dt.$$

\begin{conj}\label{conj K-type}
For all finite length smooth representations $\pi$ of $G(\BR)$, we have $m(\pi,\omega)=m_{geom}(\pi,\omega)$.
\end{conj}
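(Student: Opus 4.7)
The plan is to attack Conjecture~\ref{conj K-type} for a general real reductive $G$ with $H(\BR)=K$ a maximal connected compact subgroup by combining the parabolic-induction compatibility established in Section~\ref{section germ parabolic} with explicit analysis on the elliptic part. The overall strategy is to reduce the identity $m(\pi,\omega)=m_{geom}(\pi,\omega)$ to two cases that span the Grothendieck group $\CR(G)$: (i) representations parabolically induced from a proper Levi, and (ii) discrete series (which exist exactly when $G(\BR)$ has a compact Cartan modulo center). By the Langlands classification and the Harish-Chandra subquotient theorem, every finite length smooth representation of $G(\BR)$ is, in the Grothendieck group, a $\BZ$-linear combination of standard modules induced from limits of discrete series of Levi subgroups, so it suffices to prove the formula for these building blocks, and since both sides are additive on short exact sequences (the left via $m(\pi,\omega)=\text{EP}(\pi,\omega)$ when $K$ is compact, the right via additivity of $\theta_\pi$), this reduction is valid.

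For the inductive (parabolic) step, let $\pi=I_P^G(\tau)$ with $P=MN$ proper. On the multiplicity side the Iwasawa decomposition $G(\BR)=P(\BR)K$ gives $\pi|_K\simeq \mathrm{Ind}_{K\cap M(\BR)}^{K}(\tau|_{K\cap M(\BR)})$, so Frobenius reciprocity yields $m(\pi,\omega)=m(\tau,\omega|_{K\cap M(\BR)})$. On the geometric side, Proposition~\ref{germ parabolic induction} controls $c_{\pi}(x)$ on semisimple classes of $G(\BR)$ in terms of $c_{\tau}(y)$ on their $M(\BR)$-representatives, and a direct unfolding shows that the tori $T\in \CT(G,K)$ that contribute are precisely those conjugate into $K\cap M(\BR)$, with the Jacobians $D^G, D^K$ combining to give $D^M, D^{K\cap M(\BR)}$. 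Matching the combinatorial factors $|W(K,T)|^{-1}$, $|Z_{K}(T):K_T|^{-1}$, $d(G_T,K_T,\BR)$, $c(G_T,K_T,\BR)$ and the nilpotent set $\CN(G_T,K_T,1)$ against their $M$-analogues is essentially bookkeeping once one observes that $\CT(G,K)$ restricts to $\CT(M,K\cap M(\BR))$ under centralization. Combined with the inductive hypothesis for $M$, this yields $m_{geom}(\pi,\omega)=m_{geom}(\tau,\omega|_{K\cap M(\BR)})$, proving the equality.

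For the elliptic (discrete series) step, let $\pi$ be a discrete series with Harish-Chandra parameter $\lambda$. Here the geometric sum is concentrated on the compact Cartan $T\subset K$; by the theory of Harish-Chandra's formula on elliptic Cartans, $\theta_\pi$ is regular near a generic $t\in T(\BR)$ and
$$
D^G(t)^{1/2}\theta_\pi(t)=(-1)^{q(G)}\sum_{w\in W_K\bks W_G}\varepsilon(w)\,\frac{e^{w\lambda}(t)}{\Delta^+(t)},
$$
so the germ $c_{\theta_\pi,\CO}(t)$ and hence $c_\pi(t)$ can be read off from this expression. On the other hand, the $K$-multiplicity $m(\pi,\omega)$ is computed by the Blattner/Hecht-Schmid formula as an alternating sum over $W_K$ of weight multiplicities in $\omega^\vee$ shifted by $\lambda$. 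The proof is then to plug Harish-Chandra's character formula into the integral defining $m_{geom}(\pi,\omega)$, perform the Weyl integration formula for $K$ against the character $\theta_{\omega^\vee}$, and recognize the resulting sum as the Blattner formula for $m(\pi,\omega)$. The explicit cancellation between the Weyl denominators $D^K(t)$ and $\Delta^+(t)$ on $T(\BR)$, together with the normalizing factor $|W(K,T)|^{-1}$, should give a clean match.

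The main obstacle is the elliptic step: one must verify that the set $\CN(G_T,K_T,1)$ of regular nilpotent orbits entering the definition of $m_{geom}$ on the elliptic Cartan, together with the constants $d(G_T,K_T,\BR)$ and $c(G_T,K_T,\BR)$, conspire exactly with the sign $(-1)^{q(G)}$ and the Weyl-group averaging in Harish-Chandra's formula to reproduce Blattner's formula. This is a nontrivial combinatorial matching, especially when $G$ has Type N spherical roots so that $\CT(G,K)\neq \CT(G,K)^\circ$ and disconnected abelian subgroups intervene, as in Remark~\ref{rmk support}. A secondary difficulty is to verify additivity of $m_{geom}$ on Langlands standard modules: this requires extending Proposition~\ref{germ parabolic induction} to limits of discrete series and, more subtly, checking that the passage from standard modules to irreducibles (via the Kazhdan-Lusztig/Vogan algorithm) is compatible with the germ expansion. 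Once these two points are settled, assembling the induced and elliptic cases via Langlands classification finishes the proof of Conjecture~\ref{conj K-type}.
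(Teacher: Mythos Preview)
Your proposal targets the full Conjecture~\ref{conj K-type}, but the paper does not prove it in general: it proves only the two cases of Theorem~\ref{main K-type}, namely $G=\GL_n(\BR)$ and $G=\Res_{\BC/\BR}H$ a complex group. The paper's key observation (Proposition~\ref{GL(n) induction}) is that in both cases $G_{ell}(\BR)=\emptyset$, hence $\CR(G)=\CR(G)_{ind}$ and there is \emph{no elliptic step at all}. The entire argument is the parabolic-induction step plus an explicit low-rank base case, carried out by hand. So the part of your plan that you yourself flag as the ``main obstacle'' --- matching Harish-Chandra's discrete series character with Blattner via the constants $d,c$ and the set $\CN(G_T,K_T,1)$ --- is precisely what the paper avoids by its choice of groups. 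Your outline is a reasonable strategy toward the general conjecture, but as a proof of what is actually established in the paper it is aiming at the wrong target.

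Your parabolic step also has a genuine gap already in the $\GL_n$ case. You write that Frobenius reciprocity gives $m(\pi,\omega)=m(\tau,\omega|_{K\cap M(\BR)})$ and then invoke the inductive hypothesis for $M$. But $K\cap M(\BR)$ need not be the maximal \emph{connected} compact subgroup of $M(\BR)$: for $K=\SO_n(\BR)$ and $M=\GL_{n'}\times\GL_{n''}$ one gets $K\cap M(\BR)=\mathrm{S}(\mathrm{O}_{n'}(\BR)\times\mathrm{O}_{n''}(\BR))$, not $\SO_{n'}(\BR)\times\SO_{n''}(\BR)$, so the inductive hypothesis for the pair $(M,K_M)$ does not apply directly. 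The paper resolves this by first passing to the \emph{disconnected} model $(\GL_n(\BR),\mathrm{O}_n(\BR))$ via Proposition~\ref{O implies SO}, proving an auxiliary multiplicity formula there (Theorem~\ref{main O(n)}), and only then deducing the $\SO_n$ statement. Your claim that ``$\CT(G,K)$ restricts to $\CT(M,K\cap M(\BR))$'' and that the constants match by ``bookkeeping'' is therefore too optimistic: the support of $m_{geom}$ for $\SO_n$ is described in Lemmas~\ref{support GL(n),SO(n) even}--\ref{support GL(n),SO(n) odd} and involves disconnected abelian groups (Type~N phenomenon), and the paper needs the explicit computations of Lemmas~\ref{GL(n) SO(n) lemma 1}--\ref{GL(n) SO(n) lemma 2} plus the passage through $\mathrm{O}_n$ to make the induction go through.
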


Conjecture \ref{conj K-type} gives a multiplicity formula of K-types for all finite length smooth representations of $G(\BR)$. In Section 8 and 9, we will prove Conjecture \ref{conj K-type} when $G(\BR)=\GL_n(\BR)$ and when $G=Res_{\BC/\BR}H$ is a complex reductive group. Apparently it is enough to prove the conjecture when $\pi$ and $\omega$ are irreducible.

\section{The known cases}\label{Section known cases}
In this section, assume that $F$ is p-adic. We will show that for all the known cases, the geometric multiplicity defined in Definition \ref{definition geometric multiplicity} matches the one in the multiplicity formula that has been proved. This would imply that Conjecture \ref{main conjecture} and \ref{conj trace formula} hold for all these cases. We consider the Wittaker model in Section \ref{section Whittaker}, the Gan-Gross-Prasad model in Section \ref{section GGP}, the Ginzburg-Rallis model in Section \ref{section GR}, the Galois model in Section \ref{section Galois}, and the Shalika model in Section \ref{section Shalika}.

We would like to point out that all the models above do not have Type N root. And for all these models, we have $\CT(G,H)=\CT(G,H)^{\circ}$ (i.e. the geometric multiplicity only supports on tori of $G(F)$). This matches the discussion in Remark \ref{rmk support}.

\subsection{The Whittaker model}\label{section Whittaker}
Let $G$ be a connected reductive group defined over $F$. Assume that $G(F)$ is quasi-split. Let $B=TN$ be a Borel subgroup of $G$, $\bar{B}=T\bar{N}$ be the opposite Borel subgroup, and $\xi:N(F)\rightarrow \BC^{\times}$ be a generic character. Then there exists a unique element $\Xi\in \bar{\Fn}(F)$ such that
$$\xi(\exp(X))=\psi(<X,\Xi>),\;X\in \Fn(F).$$
Without loss of generality, we assume that $G(F)$ has finite center (otherwise, we just need to replace $N(F)$ by $N(F)Z_{G}^{\circ}(F)$ where $Z_{G}^{\circ}(F)$ is the neutral component of $Z_G(F)$). For any irreducible smooth representation $\pi$ of $G(F)$, define the multiplicity
$$m(\pi,\xi)=\dim(\Hom_{N(F)}(\pi,\xi)).$$

Let $\CO\in Nil_{reg}(\Fg(F))$ be the nilpotent orbit containing $\Xi$. By the work of Rodier in \cite{Rod81}, we have the multiplicity formula
$$m(\pi,\xi)=c_{\theta_{\pi},\CO}(1).$$
The goal of this subsection is to show that
$$m_{geom}(\pi,\xi)=c_{\theta_{\pi},\CO}(1).$$

First, it is easy to see that the set $\CT(G,N)$ only contains the trivial torus. Combining with the fact that the Whittaker model is the Whittaker induction of the model $(T,1)$, we have
$$m_{geom}(\pi,\xi)=\frac{1}{|\CN(G,N,\xi)|} \sum_{\CO'\in \CN(G,N,\xi)} c_{\theta_{\pi},\CO'}(1).$$
Hence it is enough to show that
$$\CN(G,N,\xi)=\{\CO\}.$$
By the definition of the set $\CN(G,N,\xi)$, we have $\CO\in \CN(G,N,\xi)$. Let $\CO'\in Nil_{reg}(\Fg(F))$ with $\CO'\neq \CO$. It is enough to show that $\CO'\notin \CN(G,N,\xi)$. This will follow from the following lemma and Lemma \ref{Shalika germ}.

\begin{lem}
There exists a regular semisimple element $X\in \Fg_{reg}(F)$ such that
$$\Gamma_{\CO}(X)=1,\;\Gamma_{\CO'}(X)=0.$$
Here $\Gamma_{\CO}(\cdot)$ (resp. $\Gamma_{\CO'}(\cdot)$) is the Shalika germ defined in Section \ref{section shalika germ}.
\end{lem}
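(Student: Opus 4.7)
The plan is to combine Lemma~\ref{Shalika germ} with Kostant's section theorem. Lemma~\ref{Shalika germ}, in the form proved by Kottwitz (cf.\ Proposition~4.2 of \cite{DR10}), actually identifies $\Gamma_{\CO}$ with the characteristic function of the set $A_{\CO}$ of regular semisimple $G(F)$-conjugacy classes associated to $\CO$; so $\Gamma_{\CO}$ is $\{0,1\}$-valued, and it suffices to produce a regular semisimple $X \in A_{\CO} \smallsetminus A_{\CO'}$.

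Next, I would use Kostant's section theorem: the Kostant section $\Xi + \Fb(F)$ is a section over $F$ of the quotient map $\Fg \to \Fg \sslash G$, so every stable regular semisimple class of $\Fg(F)$ meets $A_{\CO}$ in exactly one $G(F)$-conjugacy class, and likewise for $A_{\CO'}$. Thus if $A_{\CO} \subseteq A_{\CO'}$ then a stable-class-by-stable-class comparison forces $A_{\CO} = A_{\CO'}$, and therefore $\Gamma_{\CO} = \Gamma_{\CO'}$.

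The remaining task is to rule out $\Gamma_{\CO} = \Gamma_{\CO'}$ when $\CO \neq \CO'$. For this, I would appeal to the standard Galois-cohomological picture: regular nilpotent $F$-orbits of $\Fg(F)$ are parametrized by $H^{1}(F, Z(G^{\mathrm{sc}}))$ (for $G$ quasi-split, where $G^{\mathrm{sc}}$ is the simply connected cover of $G_{\mathrm{der}}$); $G(F)$-conjugacy classes inside a stable class with centralizer torus $T_C$ are parametrized by $\ker(H^{1}(F, T_C) \to H^{1}(F, G))$; and the Kostant-section assignment $\CO \mapsto (\CO\text{-associated class in the stable class})$ is induced by the natural map $H^{1}(F, Z(G^{\mathrm{sc}})) \to H^{1}(F, T_C)$. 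Since the intersection of the kernels of these maps, taken over all elliptic tori $T_C$, is trivial, some elliptic stable class $C$ separates $\CO$ and $\CO'$; any $X \in A_{\CO} \cap C$ then satisfies $\Gamma_{\CO}(X) = 1$ and $\Gamma_{\CO'}(X) = 0$.

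The main subtlety is the passage from the ``iff'' as literally stated in Lemma~\ref{Shalika germ} to the full $\{0,1\}$-valuedness of $\Gamma_{\CO}$, which is what the argument above genuinely uses; this is part of the content of the Kottwitz theorem cited. Everything else in the plan reduces to Kostant's section theorem together with the standard Galois-cohomological parametrizations of regular nilpotent orbits and of $G(F)$-conjugacy classes in a stable class.
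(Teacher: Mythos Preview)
Your approach is correct and takes a genuinely different route from the paper's. The paper argues by contradiction using pure harmonic analysis: invoking Shelstad for the $\{0,1\}$-valuedness of regular Shalika germs, it assumes $\Gamma_{\CO}=\Gamma_{\CO'}$ identically, then constructs (via a result of Vign\'eras) a test function $f\in C_c^\infty(\Fg(F))$ supported on the regular set with $J_{\CO}(f)=1$, $J_{\CO'}(f)=-1$, and $J_{\CO_0}(f)=0$ for all other nilpotent $\CO_0$; after shrinking the support so that the Shalika germ expansion applies, every regular semisimple orbital integral of $f$ equals $\Gamma_{\CO}(X)-\Gamma_{\CO'}(X)=0$, and Harish-Chandra's density theorem then forces $J_{\CO}(f)=J_{\CO'}(f)=0$, a contradiction. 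You instead invoke Lemma~\ref{Shalika germ} (Kottwitz) up front to identify $\Gamma_{\CO}$ with the indicator of $A_{\CO}$, reduce to the purely arithmetic statement $A_{\CO}\not\subset A_{\CO'}$, and then separate $\CO$ from $\CO'$ via the Galois-cohomological parametrizations of regular nilpotent orbits and of rational classes in a stable class. Your route is more constructive about where the separating $X$ lives; the paper's route is self-contained within classical harmonic analysis and does not need the full Kottwitz theorem or any cohomological input. One remark: your final step, that the maps $H^1(F,Z(G^{\mathrm{sc}}))\to H^1(F,T_C)$ have jointly trivial kernel as $T_C$ ranges over elliptic maximal tori, is true (for $\SL_2$ it unwinds to the nondegeneracy of the Hilbert symbol) but is not completely trivial and deserves a reference or a short argument; alternatively, once you have $\{0,1\}$-valuedness and the Kostant-section bijection with stable classes, the linear independence of Shalika germs already gives $\Gamma_{\CO}\neq\Gamma_{\CO'}$ without any cohomology, and then any $X$ in the stable class where they differ, chosen on the $\CO$-section, does the job.
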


\begin{proof}
By the result of Shelstad in \cite{S}, the regular Shalika germ is equal to either 0 or 1. Hence if the statement of the lemma is false, we have $\Gamma_{\CO}(X)=\Gamma_{\CO'}(X)$ for all regular semisimple elements in $\Fg(F)$. By the result of Vign\'eras in \cite{V}, there exists $f\in C_{c}^{\infty}(\Fg(F))$ supported on regular elements (including regular nilpotent elements) such that $J_{\CO}(f)=1$, $J_{\CO'}(f)=-1$ and $J_{\CO_0}(f)=0$ for all other nilpotent orbits (not necessary regular). By replacing $f$ by $f\cdot 1_{\omega}$ where $\omega$ is a small $G$-invariant neighborhood of $0$ in $\Fg(F)$, we may assume that for all $X\in Supp(f)\cap \Fg_{reg}(F)$, we have
$$J_G(X,f)=\sum_{\CO_0\in Nil(\Fg(F))} \Gamma_{\CO_0}(X)J_{\CO_0}(f).$$
This implies that
$$J_G(X,f)=\sum_{\CO_0\in Nil(\Fg(F))} \Gamma_{\CO_0}(X)J_{\CO_0}(f)=\Gamma_{\CO}(X)-\Gamma_{\CO'}(X)=0$$
for all $X\in Supp(f)\cap \Fg_{reg}(F)$. Hence $J_G(X,f)=0$ for all $X\in \Fg_{reg}(F)$. By Theorem 3.1 of \cite{HC}, we know that $J_{\CO}(f)=J_{\CO'}(f)=0$. This is a contradiction.
\end{proof}

\begin{rmk}
\emph{In this case, $\CT(G,N)=\{1\}$ which implies that all regular semisimple conjugacy classes of $\Fg(F)$ are null with respect to $N$.}
\end{rmk}

\subsection{The Gan-Gross-Prasad model}\label{section GGP}
We only consider the orthogonal group case, the unitary group case is similar. We first recall the definition of the model from Section 7 of \cite{W10}. Let $V$ be a vector space of dimension $d$, and $q$ be a nondegenerate symmetric bilinear form on $V$. Let $r\in \BN$ with $2r+1\leq d$. Suppose we have an orthogonal decomposition $V=W\oplus D\oplus Z$ where $D$ is a one-dimensional anisotropic subspace and $Z$ is a hyperbolic subspace of dimension $2r$. We fix a basis $v_0$ of $D$ and a basis $(v_i)_{i=\pm1,\cdots, \pm r}$ of $Z$ with $q(v_i,v_j)=\delta_{i,-j}$. Let $A$ be the maximal split torus of $\SO(Z)$ that preserves the subspace $Fv_i$. Let $G=\SO(V)$, $P=MN$ be the parabolic subgroup of $G$ preserves the filtration
$$Fv_r\subset Fv_r\oplus Fv_{r-1}\subset \cdots \subset Fv_r\oplus \cdots \oplus Fv_1$$
with $A\subset M$. In particular, $M=AG_0$ with $G_0=\SO(V_0)$ and $V_0=W\oplus D$. Let $\xi:N(F)\rightarrow \BC^{\times}$ be the generic character defined in Section 7.2 of \cite{W10}. Its stabilizer in $M(F)$ is $H_{0}^{+}(F)=\mathrm{O}(W)$. Let $H_0=\SO(W)$ be the neutral component of $H_{0}^{+}$ and $H=H_0\ltimes N$. The model $(G\times H_0,H,\xi)$ is the so called Gan-Gross-Prasad model for orthogonal groups (the embedding $H\rightarrow G\times H_0$ comes from the diagonal embedding $H_0\rightarrow G_0\times H_0$ and the embedding $N\rightarrow G$) defined by Gross and Prasad in \cite{GP}. It is the Whittaker induction of the model $(G_0\times H_0, H_0)$ (which is also a Gan-Gross-Prasad model). Let $\pi$ (resp. $\sigma$) be an irreducible smooth representation of $G(F)$ (resp. $H_0(F)$). Define the multiplicity
$$m(\pi\otimes \sigma,\xi)=\dim(\Hom_{H(F)}(\pi\otimes\sigma,\xi)).$$
The multiplicity formula for this model was proved by Waldspurger in \cite{W10} and \cite{W12}. The goal of this subsection is to show that the geometric multiplicity $m_{geom}(\pi\otimes \sigma,\xi)$ defined in Section \ref{Section multiplicity formula} matches Waldspurger's definition in Section 13.1 of \cite{W10}. We use $m_{geom}'(\pi\otimes \sigma,\xi)$ to denote the geometric multiplicity defined by Waldspurger.

\begin{rmk}\label{GGP constant}
\emph{$(G_0\times H_0,H_0)$ is a minimal wavefront spherical variety. Moreover, it is easy to see that there is only one open Borel orbit in $G_0(F)\times H_0(F)/H_0(F)$ and it has trivial stabilizer. In particular, we have $d(G_0\times H_0,H_0,F)=c(G_0\times H_0,H_0,F)=1$.}
\end{rmk}

\begin{prop}
The set $\CT(G\times H_0,H)$ consists of tori $T(F)$ of $H_0(F)$ (up to conjugation) such that there exists an orthogonal decomposition $W=W'\oplus W''$ of $W$ satisfies the followings conditions.
\begin{enumerate}
\item $\dim(W')$ is an even number.
\item $T(F)$ is a maximal elliptic torus of $H_0'(F)=\SO(W')(F)$.
\item If $d$ is odd, the anisotropic rank of $V''=W''\oplus D\oplus Z$ is equal to 1. If $d$ is even, the anisotropic rank of $W''$ is equal to 1. This is equivalent to say that $\SO(V'')(F)$ and $\SO(W'')(F)$ are quasi-split.
\end{enumerate}
In particular, $\CT(G\times H_0,H)=\CT(G,H)^\circ$.
\end{prop}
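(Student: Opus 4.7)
The plan is to unpack each of the four conditions in Definition \ref{defn support} and match them against the three listed in the proposition. Given $T(F) \subset H_0(F) = \SO(W)(F)$, let $W'' = W^T$ be the $T$-fixed subspace and $W' = (W'')^\perp$, so that $W = W' \oplus W''$ is orthogonal and $T$ acts on $W'$ without fixed vectors. Recall that $H_0$ embeds diagonally into $G_0 \times H_0$ and then into $G \times H_0$ via the Levi $G_0 \hookrightarrow G$, with $G$ acting on $V = W \oplus D \oplus Z$.

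The key computation is the centralizer of $T$ in $G \times H_0 = \SO(V) \times \SO(W)$. Since $T$ acts trivially on $W'' \oplus D \oplus Z$, one finds (up to determinant conditions) that $Z_{\SO(V)}(T)$ is built from $Z_{\mathrm{O}(W')}(T)$ and $\SO(W'' \oplus D \oplus Z)$, and $Z_{\SO(W)}(T)$ from $Z_{\mathrm{O}(W')}(T)$ and $\SO(W'')$. Condition (2) of Definition \ref{defn support}, namely $T(F) = Z_{Z_G(T)}(F) \cap H(F)$, then forces $Z_{\mathrm{O}(W')}(T) = T$, so $T$ is a maximal torus of $\SO(W')$ and in particular is connected; condition (3) forces $T$ to be anisotropic modulo the split center, i.e., elliptic maximal in $\SO(W')$; condition (4) is automatic for generic $t \in T$.

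Condition (1) requires $(G_T, H_T)$ to be minimal spherical with $G_T$ quasi-split. With $T$ maximal elliptic in $\SO(W')$, the pair $(G_{0,T}, H_{0,T})$ becomes, up to the diagonal $T$-factor which cancels, the smaller Gan-Gross-Prasad pair $(\SO(V'') \times \SO(W''), \SO(W''))$ with $V'' = W'' \oplus D \oplus Z$; this is minimal spherical by the standard GGP setup. The quasi-split condition then amounts to $\SO(V'')$ and $\SO(W'')$ being quasi-split. For $d$ odd, $\dim V''$ is odd and this forces the anisotropic kernel of $V''$ to be exactly $D$ of dimension one; for $d$ even, $\dim W''$ is odd and this forces the anisotropic kernel of $W''$ to have dimension one, with quasi-splitness of $\SO(V'')$ then automatic. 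A parity count in either case forces $\dim W'$ to be even. The converse is a direct verification using the same centralizer identifications, and the equality $\CT(G \times H_0, H) = \CT(G \times H_0, H)^\circ$ follows from the connectedness of the tori produced.

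The main obstacle I expect is the careful bookkeeping in the centralizer computation, in particular the precise treatment of the $\mathrm{O}(W')/\SO(W')$ distinction and the determinant conditions inside $\SO(V) \times \SO(W)$, together with the need to handle the two parity cases for $d$ separately while uniformly translating quasi-splitness of $\SO$ into anisotropic-kernel conditions.
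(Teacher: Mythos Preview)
Your overall strategy is close to the paper's, but there is a genuine gap in the logical order. You write that condition~(2) of Definition~\ref{defn support}, namely $T(F)=Z_{Z_{G\times H_0}(T)}(F)\cap H(F)$, ``forces $Z_{\mathrm{O}(W')}(T)=T$, so $T$ is a maximal torus of $\SO(W')$''. This does not follow. Condition~(2) only says that $T$ coincides with the \emph{center} of its centralizer (intersected with $H$), not with the centralizer itself. For an abelian $T\subset\SO(W')$ the group $\SO(W')_T$ may well be nonabelian while still having center equal to $T$; e.g.\ $T=\{\pm I_{W'}\}$ with $\dim W'$ even gives $\SO(W')_T=\SO(W')$ and $Z(\SO(W')_T)=T$. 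So condition~(2) alone cannot force $T$ to be maximal.

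The paper fills this gap with condition~(1). Since $(G_T\times H_{0,T},H_T)$ is minimal and contains the factor $(\SO(W')_T\times\SO(W')_T,\SO(W')_T^{\mathrm{diag}})$, the elementary observation that a group pair $(H_1\times H_1,H_1)$ is minimal spherical if and only if $H_1$ is abelian forces $\SO(W')_T$ to be a torus, hence a maximal torus of $\SO(W')$. Only \emph{then} does condition~(2) pin down $T(F)=\SO(W')_T(F)$ (so $T$ is connected and maximal), and condition~(3) gives ellipticity. In your write-up you invoke condition~(1) only afterwards, for the quasi-split requirement, but its minimality clause is doing the essential work of making $T$ maximal. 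Once you reorder the argument this way and insert the $(H_1\times H_1,H_1)$ lemma, the rest of your outline (centralizer bookkeeping, quasi-splitness $\Leftrightarrow$ anisotropic-rank conditions, evenness of $\dim W'$, and the converse verification) goes through and matches the paper's proof.
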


\begin{rmk}
\emph{The proposition implies that the set $\CT(G\times H_0,H)$ is equal to the set $\ul{\CT}$ defined in Section 7.3 of \cite{W10}.}
\end{rmk}

\begin{proof}
It is easy to see that if a torus satisfies (1)-(3), it belongs to the set $\CT(G,H)$. So we only need to prove the other direction. For given $T(F)\in \CT(G,H)$, we need to show that $T(F)$ satisfies (1)-(3). Let $W''$ be the intersection of the kernel of $t-1$ for $t\in T(F)$. Then for almost all $t\in T_H(F)$, $W''$ is the kernel of $t-1$. In particular, $q|_{W''}$ is nondegenerate and $\dim(W)-\dim(W'')$ is an even number. Let $W'$ be the orthogonal complement of $W''$ in $W$ (i.e. $W=W'\oplus W''$), and $V''=W''\oplus D\oplus Z$. Then $T(F)$ is an abelian subgroup of $\SO(W')(F)$, $G_T=\SO(W')_T\times \SO(V'')$, $H_{0,T}=\SO(W')_{T}\times \SO(W'')$ and $H_T=\SO(W')_T\times (\SO(W'')\ltimes N'')$ where $N''=N\cap \SO(V'')$ is the unipotent radical of the parabolic subgroup $P''=P\cap \SO(V'')$ of $\SO(V'')$. In particular, $(\SO(V'')\times \SO(W''),\SO(W'')\ltimes N'')$ is the Gan-Gross-Prasad model associated to the decomposition $V''=W''\oplus D\oplus Z$. We will show that the decomposition $W=W'\oplus W''$ satisfies condition (1)-(3).

(1) follows from the fact that $\dim(W)-\dim(W'')$ is an even number. Since $G_T(F)$ and $H_{0,T}(F)$ are quasi-split, so are $\SO(V'')(F)$ and $\SO(W'')(F)$. This proves (3). It remains to prove (2). The following two statements follow from the definition of minimal spherical variety.
\begin{itemize}
\item If $(G_1,H_1)$ and $(G_2,H_2)$ are two spherical pairs, then $(G_1\times G_2,H_1\times H_2)$ is minimal if and only if $(G_1,H_1)$ and $(G_2,H_2)$ are minimal.
\item For any connected reductive group $H_1$, the spherical pair $(H_1\times H_1,H_1)$ is minimal if and only if $H_1$ is abelian (i.e. it is a torus).
\end{itemize}
Since $T(F)\in \CT(G,H)$, $(G_T\times H_{0,T},H_T)$ is minimal. By the statements above, we know that $\SO(W')_T$ is abelian which implies that $\SO(W')_T$ is a maximal torus of $\SO(W')$. By Definition \ref{defn support}(3), we know that $T(F)$ is the intersection of $H(F)$ with the center of $Z_G(T)(F)\times Z_{H_0}(T)(F)$, which implies that $T(F)=\SO(W')_T(F)$ (i.e. $T(F)=T^\circ(F)$ is a maximal torus of $\SO(W')(F)$). Finally, by Definition \ref{defn support}, we know that $T(F)$ is compact which implies that it is a maximal elliptic torus of $\SO(W')(F)$. This proves (2) and finishes the proof of the proposition.
\end{proof}

Given $T(F)\in \CT(G\times H_0,H)$ and let $W=W'\oplus W''$ be the decomposition associated to $T$. Then the model $(G_T\times H_{0,T},H)$ is the product of the abelian model $(\SO(W')_T,\SO(W')_T)=(T,T)$ and the Gan-Gross-Prasad model associated to the decomposition $V''=W''\oplus D\oplus Z$. By Remark \ref{GGP constant}, we know that the constants $d(G_{0,T}\times H_{0,T},H_{0,T},F)=c(G_{0,T}\times H_{0,T},H_{0,T},F)$ associated to the Gan-Gross-Prasad model are equal to 1. Moreover, since $Z_{H_0}(T)=H_{0,T}$, the constant $|Z_{H_0}(T)(F):H_{0,T}(F)|$ in the definition of geometric multiplicity is also equal to 1. Hence in order to prove $m_{geom}(\pi\otimes \sigma,\xi)=m_{geom}'(\pi\otimes \sigma,\xi)$, it remains to show that our choice of nilpotent orbits in Section \ref{Section nilpotent orbit} matches Waldspurger's choice in Section 7.3 of \cite{W10}.

\begin{prop}
Assume that $G(F)$ and $H_0(F)$ are quasi-split. Let $\CO_G$ (resp. $\CO_H$) be the regular nilpotent orbit of $\Fg(F)$ (resp. $\Fh_0(F)$) defined in Section 7.3 of \cite{W10}. Then we have
$$\CN(G\times H_0,H,\xi)=\{\CO_G\times \CO_H\}.$$
\end{prop}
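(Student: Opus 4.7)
The plan is to establish the two inclusions $\CO_G\times \CO_H \in \CN(G\times H_0,H,\xi)$ and $\CN(G\times H_0,H,\xi)\subset \{\CO_G\times \CO_H\}$ separately. Throughout, the key tool is Lemma \ref{Shalika germ} (Kottwitz), by which a regular semisimple conjugacy class of $\Fg(F)\times\Fh_0(F)$ is associated to a regular nilpotent orbit $\CO$ exactly when the Shalika germ $\Gamma_{\CO}$ equals $1$ on that class. Combined with the result of Shelstad \cite{S} that regular Shalika germs take only the values $0$ and $1$, this reduces the whole problem to germ computations on the slice $\Xi+\Fh_0^\perp(F)+\Fn(F)$ and on the set of null regular semisimple conjugacy classes.

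For the first inclusion, I would reinterpret Waldspurger's construction in Section 7 of \cite{W10} in the present framework. The affine subspace $\Xi+\Fh_0^\perp(F)+\Fn(F)$ is (up to notation) exactly Waldspurger's parametrizing slice for the conjugacy classes contributing to the geometric multiplicity. His analysis shows it behaves as a Kostant-type section for $\CO_G\times\CO_H$: on the one hand every regular semisimple $X$ in the slice satisfies $\Gamma_{\CO_G\times\CO_H}(X)=1$, hence is associated to $\CO_G\times\CO_H$; on the other hand, for almost every stable regular semisimple conjugacy class in $\Fg(F)\times\Fh_0(F)$ satisfying Waldspurger's ellipticity condition, a representative can be produced in the slice. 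Once one checks that Waldspurger's ellipticity condition coincides up to negligible exceptions with our null-with-respect-to-$H$ condition (via the description of $\CT(G\times H_0,H)^\circ$ in the previous proposition and of $\CL(G\times H_0,H)$), this yields $\CO_G\times\CO_H\in \CN(G\times H_0,H,\xi)$.

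For the reverse inclusion, fix $\CO\in Nil_{reg}(\Fg(F)\times\Fh_0(F))$ with $\CO\neq \CO_G\times\CO_H$. I would mimic the argument of Section \ref{section Whittaker}: if $\Gamma_{\CO}$ and $\Gamma_{\CO_G\times\CO_H}$ agreed on every regular semisimple element then a standard application of Vign\'eras' construction of test functions \cite{V} combined with Harish-Chandra's uniqueness principle \cite{HC} (exactly as in the lemma in Section \ref{section Whittaker}) would yield a contradiction. Hence there exists a regular semisimple $X$ with $\Gamma_{\CO}(X)=1$ and $\Gamma_{\CO_G\times\CO_H}(X)=0$. To promote this to a null element with respect to $H$, I would replace $X$ by a generic point of its $G(\bar F)$-orbit lying inside the Kostant section for $\CO$; the null locus is open and $G$-invariant, and one can verify that it meets the Kostant section for $\CO$ in a nonempty Zariski-open subset. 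Such an $X$ is null w.r.t.\ $H$ and associated to $\CO$, but by the Kostant-section property in Step 1 it cannot lie in the slice. This shows $\CO\notin \CN(G\times H_0,H,\xi)$.

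The main obstacle is the bookkeeping needed to match Waldspurger's description of the relevant tori and conjugacy classes in Section 7 of \cite{W10} with our definitions of $\CT(G\times H_0,H)^\circ$, $\CL(G\times H_0,H)$, and null conjugacy classes. Once this dictionary is set up and the Kostant-section-like property of $\Xi+\Fh_0^\perp(F)+\Fn(F)$ for $\CO_G\times\CO_H$ is read off from \cite{W10}, the germ-theoretic arguments in Step 1 and Step 2 go through cleanly.
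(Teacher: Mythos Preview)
Your two-step plan has the right shape, but both steps have gaps, and the most serious one is in your first step.

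\textbf{On the inclusion $\CO_G\times\CO_H\in\CN(G\times H_0,H,\xi)$.} You assert that Waldspurger's analysis in Section~7 of \cite{W10} already shows the slice $\Xi+\Fh_0^\perp(F)+\Fn(F)$ is a Kostant-type section for $\CO_G\times\CO_H$, in the sense that almost every null regular semisimple conjugacy class associated to $\CO_G\times\CO_H$ meets it. But this is exactly the statement to be proved, and it is \emph{not} directly available in \cite{W10}. The paper instead proves this inclusion by a contradiction argument using the Lie-algebra local trace formula of Section~11 of \cite{W10}: if $\CO_G\times\CO_H\notin\CN(G\times H_0,H,\xi)$, one can find a positive-measure set $\omega'$ of null classes associated to $\CO_G\times\CO_H$ that miss the slice; choosing $f_G,f_H$ strongly cuspidal with $\hat\theta_{f_G}\times\hat\theta_{f_H}$ the characteristic function of a small neighbourhood $\omega$, the spectral side of the trace formula becomes the integral over $\omega\cap\Ft_0(F)^H$ while the geometric side (via germs and nullness) collapses to $c_{\theta_{f_G}\times\theta_{f_H},\CO_G\times\CO_H}(0)$, which by Lemma~\ref{Shalika germ} equals the integral over all of $\omega$. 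The missing set $\omega'$ then gives a contradiction. This trace-formula input is essential; you cannot bypass it by bookkeeping.

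\textbf{On the exclusion of $\CO\neq\CO_G\times\CO_H$.} Your Whittaker-style argument produces some $X$ with $\Gamma_\CO(X)=1$ and $\Gamma_{\CO_G\times\CO_H}(X)=0$, but then you need $X$ to be null \emph{and} not $G(F)$-conjugate into the slice. Your proposed fix, moving $X$ to a generic point of the Kostant section for $\CO$, may destroy the condition $\Gamma_{\CO_G\times\CO_H}(X)=0$ (several regular Shalika germs can be $1$ on the same class), and then claim~(a) from your Step~1 no longer excludes $X$ from the slice. The paper avoids this entirely: Sections~11.4--11.6 of \cite{W10} explicitly construct, for each $\CO\neq\CO_G\times\CO_H$, open sets $\Ft_G(F),\Ft_H(F)$ of regular semisimple classes that are simultaneously null with respect to $H$, associated to $\CO$, and disjoint from the slice. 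That explicit construction, together with Lemma~\ref{Shalika germ}, immediately gives $\CO\notin\CN(G\times H_0,H,\xi)$.
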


\begin{proof}
Let $\Xi+\Fh_{0}^{\perp}(F)+\Fn(F)\subset \Fg(F)\oplus \Fh_0(F)$ be the space associated to the model $(G\times H_0,H,\xi)$ as in Section \ref{Section slice nonred}. By Lemma \ref{Shalika germ} together with Section 11.4-11.6 of \cite{W10}, we know that $\CO\notin \CN(G\times H_0,H,\xi)$ for any $\CO\in Nil_{reg}(\Fg(F)\times \Fh_0(F))$ with $\CO\neq \CO_G\times \CO_H$. In fact, for any $\CO\in Nil_{reg}(\Fg(F)\times \Fh_0(F))$ with $\CO\neq \CO_G\times \CO_H$, in Section 11.4-11.6 of \cite{W10}, Waldspurger has constructed an open subset $\Ft_G(F)$ (resp. $\Ft_H(F)$) of the regular semisimple conjugacy classes of $\Fg(F)$ (resp. $\Fh_0(F)$) such that for all $X_G\times X_H\in \Ft_G(F)\times \Ft_H(F)$, the followings hold.
\begin{itemize}
\item $\Gamma_{\CO}(X_G\times X_H)=1$ and the conjugacy class $X_G\times X_H$ has no intersection with $\Xi+\Fh_{0}^{\perp}(F)+\Fn(F)$.
\item $X_G\times X_H$ is null with respect to $H$.
\end{itemize}
Combining with Lemma \ref{Shalika germ}, we know that $\CO\notin \CN(G\times H_0,H,\xi)$.

Now it remains to show that $\CO_G\times \CO_H\in \CN(G\times H_0,H,\xi)$. The idea is to use the Lie algebra version of the local trace formula proved in \cite{W10}. Let $f_G$ (resp. $f_H$) be a smooth compactly supported strongly cuspidal function on $\Fg(F)$ (resp. $\Fh_0(F)$). Let $\theta_{f_G}$ (resp. $\theta_{f_H}$) be the quasi-character on $\Fg(F)$ (resp. $\Fh_0(F)$) associated to $f_G$ (resp. $f_H$), and $\hat{\theta}_{f_G}$ (resp. $\hat{\theta}_{f_H}$) be its Fourier transform. By the local trace formula proved in Section 11 of \cite{W10}, we have
\begin{equation}\label{1}
I(\theta_{f_H},\theta_{f_G})=\sum_{T\in \CT} |W(G,T)|^{-1} \int_{\Ft(F)^H} D^{G\times H_0}(t)^{1/2} \hat{\theta}_{f_G}\times \hat{\theta}_{f_H}(t) dt
\end{equation}
where $I(\theta_{f_H},\theta_{f_G})$ is the Lie algebra analogue of the geometric multiplicity defined in Section 7.9 of \cite{W10}, $\CT$ is the set of maximal tori of $G(F)\times H_0(F)$, and $W(G,T)=N_G(T)(F)/Z_G(T)(F)$ is the Weyl group. For $T\in \CT$, $\Ft^{H}(F)$ is the set of elements in $\Ft_{reg}(F)$ that is conjugated to an element in $\Xi+\Fh_{0}^{\perp}(F)+\Fn(F)$ (which is an open subset of $\Ft_{reg}(F)$).

If $\CO_G\times \CO_H\notin \CN(G\times H_0,H,\xi)$, by Lemma \ref{Shalika germ} and the definition of $\CN(G\times H_0,H,\xi)$, there exists $T_0\in \CT$ and a small open compact subset $\omega$ of $\Ft_{0,reg}(F)$ satisfies the following two conditions
\begin{itemize}
\item For all $X\in \omega$, $X$ is null with respect to $H$ and $X$ is associated to $\CO_G\times \CO_H$.
\item The set $\omega'=\{X\in \omega|\; X\notin \Ft_0(F)^H\}$ has nonzero measure.
\end{itemize}
Now choose $f_G$ and $f_H$ such that $\hat{\theta}_{f_G}\times \hat{\theta}_{f_H}$ is the characteristic function on $\omega^{G\times H_0}$. Then the right hand side of \eqref{1} is equal to
\begin{equation}\label{2}
\int_{\omega\cap \Ft_0(F)^H} D^{G\times H_0}(t)^{1/2}dt.
\end{equation}
Since every element in $\omega$ is null with respect to $H$ and is associated to $\CO_G\times \CO_H$, by Proposition 4.1.1 and 4.7.1 of \cite{B15}, we have
$$I(\theta_{f_H},\theta_{f_G})=c_{\theta_{f_G}\times \theta_{f_H},\CO_G\times \CO_H}(0)=\int_{\omega} D^{G\times H_0}(t)^{1/2}\Gamma_{\CO_G\times \CO_H}(t)dt$$
$$=\int_{\omega} D^{G\times H_0}(t)^{1/2}dt=\int_{(\omega\cap \Ft_0(F)^H)\cup \omega'} D^{G\times H_0}(t)^{1/2}dt.$$
This is a contradiction to \eqref{1} and \eqref{2} since $\omega'$ has nonzero measure. Hence $\CO_G\times \CO_H\in \CN(G\times H_0,H,\xi)$. This finishes the proof of the proposition.
\end{proof}

\subsection{The Ginzburg-Rallis model}\label{section GR}
In this subsection, we consider the Ginzburg-Rallis model case. We will show that the geometric multiplicity defined in Section \ref{Section multiplicity formula} matches the one in the multiplicity formula proved in \cite{Wan15}, \cite{Wan16} (the general linear group case) and \cite{WZ} (the unitary and unitary similitude group case). For simplicity, we only consider the quasi-split unitary group and unitary similitude group cases, the non quasi-split case and the general linear group case follows from a similar and easier argument.

Set $w_2=\begin{pmatrix}0&1\\1&0\end{pmatrix}$, and $w_n=\begin{pmatrix}0&1\\w_n&0\end{pmatrix}$ for $n>2$. Let $E/F$ be a quadratic extension. We define the unitary group and unitary similitude group to be
$$\mathrm{U}_n(F)=\{g\in \GL_n(E)|\; \bar{g}^t w_ng=w_n\},\;\mathrm{GU}_n(F)=\{g\in \GL_n(E)|\; \bar{g}^t w_ng=\lambda w_n,\;\lambda\in F^{\times}\}.$$
We use $\lambda:\mathrm{GU}_n(F)\rightarrow F^{\times}$ to denote the similitude character.

\subsubsection{The unitary similitude group case}
Let $G(F)=\mathrm{GU}_6(F)$, $H(F)=H_0(F)\ltimes N(F)$ with
$$H_0(F)=\{\begin{pmatrix} h&0&0\\0&h&0\\0&0&\lambda(h) w_2(\bar{g}^t)^{-1}w_2 \end{pmatrix}|\; h\in \mathrm{GU}_2(F)\},$$
$$N(F)=\{\begin{pmatrix} I_2&X&Y\\0&I_2&-w_2\bar{X}^t w_2\\ 0&0&I_2\end{pmatrix} |\; X,Y\in Mat_{2\times 2}(E), w_2Xw_2\bar{X}^t+w_2Yw_2+\bar{Y}^t=0\}.$$
Let $\chi$ be a character of $\mathrm{GU}_2(F)$. Define the character $\omega\otimes \xi$ on $H(F)$ to be
$$\omega\otimes \xi(\begin{pmatrix} h&0&0\\0&h&0\\0&0&\lambda(h) w_2(\bar{g}^t)^{-1}w_2 \end{pmatrix} \begin{pmatrix} I_2&X&Y\\0&I_2&-w_2\bar{X}^t w_2\\ 0&0&I_2\end{pmatrix})=\chi(h)\psi(\tr_{E/F}(\tr(X))).$$
Let $\pi$ be an irreducible smooth representation of $G(F)$. Define the multiplicity
$$m(\pi,\omega\otimes \xi)=\dim(\Hom_{H(F)} (\pi,\omega\otimes \xi)).$$
The model $(G,H)$ is the unitary similitude analogue of the Ginzburg-Rallis model defined in \cite{GR}, and it is the Whittaker induction of the model $(G_0,H_0,\xi)=(\mathrm{GU}_2(F)\times \GL_2(E),\mathrm{GU}_2(F),\xi)$. Also it is easy to see that both $(G,H)$ and $(G_0,H_0)$ are minimal.

In \cite{WZ}, we proved the multiplicity formula
$$m(\pi,\omega\otimes \xi)=c_{\theta_{\pi},\CO_{reg}}(1)+\sum_{T\in \CT_{ell}(H_0)} |W(H_0,T)|^{-1} \int_{T(F)/A_{H_0}(F)} \omega(t)^{-1} D^H(t) c_{\theta_{\pi},\CO_t}(t)dt$$
where $\CO_{reg}$ is the unique regular nilpotent orbit of $\Fg(F)$, $\CT_{ell}(H_0)$ is the set of all maximal elliptic tori of $H_0(F)$, and for $T\in \CT_{ell}(H_0)$, $t\in T(F)_{reg}$, $\CO_t$ is the unique regular nilpotent orbit in $\Fg_t(F)$. The goal of this subsection is to show that
\begin{equation}\label{similitude unitary GR}
m_{geom}(\pi,\omega\otimes \xi)=c_{\theta_{\pi},\CO_{reg}}(1)+\sum_{T\in \CT_{ell}(H_0)} |W(H_0,T)|^{-1} \int_{T(F)/A_{H_0}(F)} \omega(t)^{-1} D^H(t) c_{\theta_{\pi},\CO_t}(t)dt.
\end{equation}

First, it is easy to see from the definition that $\CT(G,H)=\CT(G,H)^\circ=\CT_{ell}(H_0)\cup \{1\}$. For $T\in \CT_{ell}(H_0)$, $G_T=Z_G(T),\; H_{0,T}=Z_{H_0}(T)$, and the model $(G_T,H_T,\xi)$ is just the Whittaker model of $G_T$. By the result in Section \ref{section Whittaker} for the Whittaker model, we only need to consider the geometric multiplicity at $T=\{1\}$ and it is enough to prove the following lemma.

\begin{lem}\label{unitary GR lemma}
\begin{enumerate}
\item $d(G_0,H_0,F)=c(G_0,H_0,1)=1$.
\item $\CN(G,H,\xi)=\{\CO_{reg}\}$.
\end{enumerate}
\end{lem}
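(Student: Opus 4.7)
Part~(1) reduces to a direct computation from the definitions in Section~\ref{Section constant d(G,H,F)}. Choose $B_0=T_0N_0$ to be the product of the upper triangular Borel of $\GL_2(E)$ with an $F$-rational Borel of $\mathrm{GU}_2$; then $B_0H_0$ is Zariski open in $G_0$, and $B_0(F)\cap H_0(F)$ may be identified with the $F$-points of a Borel subgroup of $\mathrm{GU}_2$, which is connected, giving $c(G_0,H_0,F)=1$. For $d(G_0,H_0,F)$ I would check directly that the little Weyl group $W_X$ coincides with $W_{G_0}$ under the diagonal embedding, so that it suffices to show $d'(G_0,H_0,F)=1$. Via the bijection of Section~\ref{Section constant d(G,H,F)} this amounts to proving
\[
\ker\!\bigl(H^1(F,B_0\cap H_0)\to H^1(F,H_0)\bigr)=\{1\},
\]
which follows from Hilbert~90 for the torus factor of $B_0\cap H_0$ together with the vanishing of $H^1$ for unipotent groups.

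For Part~(2), the inclusion $\CO_{reg}\in\CN(G,H,\xi)$ is proved by the same strategy as in the Gan--Gross--Prasad case treated above, now applied to the Lie algebra version of the local trace formula for the unitary similitude Ginzburg--Rallis model established in~\cite{WZ}. If $\CO_{reg}\notin\CN(G,H,\xi)$, then there exist a maximal torus $T\subset G$ and an open compact set $\omega\subset\Ft_{reg}(F)$ of positive measure consisting of elements that are null with respect to $H$ and associated to $\CO_{reg}$ (so $\Gamma_{\CO_{reg}}(X)=1$ by Lemma~\ref{Shalika germ}), yet a positive-measure subset of which meets no $G(F)$-conjugate of $\Xi+\Fh_0^{\perp}(F)+\Fn(F)$. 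Plugging strongly cuspidal test functions on $\Fg(F)$ and $\Fh_0(F)$ whose Fourier transforms are supported on the $G\times H_0$-conjugates of $\omega$ into the geometric expansion of the trace formula, and comparing the answer obtained from the trace formula of~\cite{WZ} with the one obtained from Shalika germs and Proposition~4.7.1 of~\cite{B15}, yields a numerical contradiction, in exact parallel with the orthogonal Gan--Gross--Prasad argument.

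For the reverse inclusion I must exclude every regular nilpotent orbit $\CO'\neq\CO_{reg}$ of $\Fg(F)$. For each such $\CO'$ the plan is to exhibit, for some maximal torus $T\subset G$, a positive-measure open subset $\omega'\subset\Ft_{reg}(F)$ of elements that are null with respect to $H$, associated to $\CO'$, and whose $G(F)$-conjugacy classes are disjoint from $\Xi+\Fh_0^{\perp}(F)+\Fn(F)$; the construction is modeled on Sections~11.4--11.6 of~\cite{W10}, starting from elliptic regular semisimple elements of a Levi of $G$ whose stable invariants single out $\CO'$. Once $\omega'$ is produced, applying the trace formula comparison as in the first inclusion forces $\CO'\notin\CN(G,H,\xi)$. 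Note that the equality $d(G_0,H_0,F)=1$ from Part~(1) is essential at this stage, since by Remark~\ref{rmk stable} it ensures that stable conjugacy and $H_0(F)$-rational conjugacy coincide on the slice $\Xi+\Fh_0^{\perp}(F)+\Fn(F)$, which is what allows the Shalika germ invariant to be read off from membership in this slice.

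The main obstacle is precisely this last geometric input: one needs an explicit Kostant-type parametrization of the stable regular semisimple classes meeting $\Xi+\Fh_0^{\perp}(F)+\Fn(F)$, together with a dictionary between the resulting invariants and the combinatorial data parametrizing the regular nilpotent orbits of $\Lie(\mathrm{GU}_6)(F)$. Once this dictionary is in hand, the cohomological bookkeeping in Part~(1) and the trace formula comparison underlying both inclusions in Part~(2) proceed by the same template as the orthogonal Gan--Gross--Prasad model case handled earlier in the section.
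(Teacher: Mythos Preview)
Your Part~(1) contains a genuine error in the choice of Borel. With $B_0$ the product of the upper-triangular Borels of $\GL_2(E)$ and $\mathrm{GU}_2$, and $H_0\cong\mathrm{GU}_2$ embedded diagonally as $h\mapsto(h,h)$, the intersection $B_0\cap H_0$ is indeed the upper-triangular Borel of $\mathrm{GU}_2$---but this means $B_0H_0$ is \emph{not} Zariski open in $G_0$: a dimension count gives $\dim B_0+\dim H_0-\dim(B_0\cap H_0)=10+5-4=11<13=\dim G_0$. So your Borel does not witness the spherical structure, and neither $c(G_0,H_0,F)$ nor $d'(G_0,H_0,F)$ can be read off from it. The paper instead observes that over $\bar F$ the pair $(G_0,H_0)$ is the trilinear $\GL_2$ model $(\GL_2^3,\GL_2^{\mathrm{diag}})$, which is wavefront (giving $W_X=W_{G_0}$), and that for a Borel in general position the stabilizer of the open orbit is exactly the center $Z_{H_0}\cong E^\times$, which is connected; this yields $c=1$ and $d'=1$ directly. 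Your Hilbert~90 argument would actually succeed once you have the correct intersection $B_0\cap H_0=Z_{H_0}\cong\mathrm{Res}_{E/F}\mathbb{G}_m$, since $H^1(F,\mathrm{Res}_{E/F}\mathbb{G}_m)=1$, but the route you wrote down does not get there.

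For Part~(2) your argument for $\CO_{reg}\in\CN(G,H,\xi)$ via the trace formula of \cite{WZ} is the right idea (minor point: here the model is $(G,H)$ with a single ambient group, so you work with one strongly cuspidal function on $\Fg(F)$, not a pair on $\Fg(F)\times\Fh_0(F)$ as in the GGP setup). But the ``reverse inclusion'' and the ``main obstacle'' you describe are illusory: since $G=\mathrm{GU}_6$, the Lie algebra $\Fg(F)$ has a \emph{unique} regular nilpotent orbit $\CO_{reg}$ (the two regular nilpotent orbits of $\Fu_6(F)$ are merged by the similitude factor). Thus $\CN(G,H,\xi)\subset Nil_{reg}(\Fg(F))=\{\CO_{reg}\}$ automatically, and the only content of (2) is the nonemptiness $\CO_{reg}\in\CN(G,H,\xi)$. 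Your plan to construct Waldspurger-style subsets singling out $\CO'\neq\CO_{reg}$, and the Kostant-type dictionary you flag as the main difficulty, are simply not needed here.
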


\begin{proof}
It is easy to see that there is only one open Borel orbit in $G_0(F)/H_0(F)$ and the stabilizer of this orbit is the center of $H_0(F)$ which is connected. This implies that $d'(G_0,H_0,F)=c(G_0,H_0,F)=1$. On the other hand, the model $(G_0(\bar{F}),H_0(\bar{F}))$ is essentially the trilinear $\GL_2$ model $(\GL_2\times \GL_2\times \GL_2,\GL_{2}^{diag})$ which is wavefront. Hence $d(G_0,H_0,F)=d'(G_0,H_0,F)=1$. This proves (1). For (2), the argument is very similar to the Gan-Gross-Prasad model case. We just need to use the local trace formula for the model $(G,H)$ proved in \cite{WZ}. We will skip the details here. This finishes the proof of the lemma and hence the proof of \eqref{similitude unitary GR}.
\end{proof}

\subsubsection{The unitary group case}
Let $G(F)=\mathrm{U}_6(F)$, $H(F)=H_0(F)\ltimes N(F)$ with
$$H_0(F)=\{\begin{pmatrix} h&0&0\\0&h&0\\0&0& w_2(\bar{g}^t)^{-1}w_2 \end{pmatrix}|\; h\in \mathrm{U}_2(F)\},$$
$$N(F)=\{\begin{pmatrix} I_2&X&Y\\0&I_2&-w_2\bar{X}^t w_2\\ 0&0&I_2\end{pmatrix} |\; X,Y\in Mat_{2\times 2}(E), w_2Xw_2\bar{X}^t+w_2Yw_2+\bar{Y}^t=0\}.$$
Let $\chi$ be a character of $\mathrm{U}_2(F)$. Define the character $\omega\otimes \xi$ on $H(F)$ to be
$$\omega\otimes \xi(\begin{pmatrix} h&0&0\\0&h&0\\0&0& w_2(\bar{g}^t)^{-1}w_2 \end{pmatrix} \begin{pmatrix} I_2&X&Y\\0&I_2&-w_2\bar{X}^t w_2\\ 0&0&I_2\end{pmatrix})=\chi(h)\psi(\tr_{E/F}(\tr(X))).$$
Let $\pi$ be an irreducible smooth representation of $G(F)$. Define the multiplicity
$$m(\pi,\omega\otimes \xi)=\dim(\Hom_{H(F)} (\pi,\omega\otimes \xi)).$$
The model $(G,H)$ is the unitary analogue of the Ginzburg-Rallis model and it is the Whittaker induction of the model $(G_0,H_0,\xi)=(\mathrm{U}_2(F)\times \GL_2(E),\mathrm{U}_2(F),\xi)$. Also it is easy to see that both $(G,H)$ and $(G_0,H_0)$ are minimal.

In \cite{WZ}, we proved the multiplicity formula
$$m(\pi,\omega\otimes \xi)=c_{\theta_{\pi},\CO_{reg,1}}(1)+c_{\theta_{\pi},\CO_{reg,2}}(1)+\sum_{T\in \CT_{ell}(H_0)} |W(H_0,T)|^{-1} \int_{T(F)} \omega(t)^{-1} D^H(t) c_{\theta_{\pi},\CO_t}(t)dt$$
where $\CO_{reg,1},\CO_{reg,2}$ are the regular nilpotent orbits of $\Fg(F)$, $\CT_{ell}(H_0)$ is the set of all maximal elliptic tori of $H_0(F)$, and for $T\in \CT_{ell}(H_0)$, $t\in T(F)_{reg}$, $\CO_t$ is the unique regular nilpotent orbit in $\Fg_t(F)$. The goal of this subsection is to show that
\begin{equation}\label{GR unitary}
m_{geom}(\pi,\omega\otimes \xi)=c_{\theta_{\pi},\CO_{reg,1}}(1)+c_{\theta_{\pi},\CO_{reg,2}}(1)+\sum_{T\in \CT_{ell}(H_0)} |W(H_0,T)|^{-1}\int_{T(F)} \omega(t)^{-1} D^H(t) c_{\theta_{\pi},\CO_t}(t)dt.
\end{equation}
By the same argument as in the unitary similitude group case,  we only need to prove the following lemma.

\begin{lem}
\begin{enumerate}
\item $d(G_0,H_0,F)=2,\;c(G_0,H_0,F)=1$.
\item $\CN(G,H,\xi)=\{\CO_{reg,1},\;\CO_{reg,2}\}$.
\end{enumerate}
\end{lem}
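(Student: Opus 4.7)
My plan is to closely follow the argument of Lemma \ref{unitary GR lemma} for the similitude case, with the essential new ingredient being that the intersection $H_0\cap B$ now has nontrivial Galois cohomology, producing the factor of $2$ in $d(G_0,H_0,F)$. As in the similitude case, over $\bar F$ the model $(G_0,H_0)=(\GL_2(E)\times \mathrm{U}_2,\mathrm{U}_2^{diag})$ becomes the trilinear $\GL_2$ model $(\GL_2^3,\GL_2^{diag})$, which is minimal and wavefront. Consequently $(G_0,H_0)$ is wavefront, $|W_{G_0}|=|W_X|$, and $d(G_0,H_0,F)=d'(G_0,H_0,F)$.

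For part (1), I would fix an $F$-rational Borel $B\subset G_0$ with $BH_0$ open. In the trilinear $\GL_2$ model the stabilizer of the open Borel orbit is the central $\BG_m$ of the diagonal $\GL_2$, so $H_0\cap B$ is a one-dimensional $F$-torus with geometric form $Z_{H_0,\bar F}=\BG_m$. The unique such $F$-form sitting inside $H_0=\mathrm{U}_2$ is the center $Z_{H_0}=\mathrm{U}_1$, the norm-one torus of $E/F$. Since $\mathrm{U}_1$ is connected, $c(G_0,H_0,F)=1$.

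It remains to compute $d'(G_0,H_0,F)=|\ker(H^1(F,\mathrm{U}_1)\to H^1(F,\mathrm{U}_2))|$. From the exact sequence $1\to \mathrm{U}_1\to \Res_{E/F}\BG_m\xrightarrow{N}\BG_m\to 1$ combined with Hilbert 90, one obtains $H^1(F,\mathrm{U}_1)\cong F^\times/N_{E/F}(E^\times)\cong \BZ/2\BZ$; from $1\to \SU_2\to \mathrm{U}_2\xrightarrow{\det}\mathrm{U}_1\to 1$ together with $H^1(F,\SU_2)=1$ ($\SU_2$ being simply connected), $\det$ induces an isomorphism $H^1(F,\mathrm{U}_2)\xrightarrow{\sim}H^1(F,\mathrm{U}_1)$. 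The composition $\mathrm{U}_1\hookrightarrow \mathrm{U}_2\xrightarrow{\det}\mathrm{U}_1$ is the squaring map $z\mapsto z^2$, which is zero on $\BZ/2\BZ$; since $\det_*$ is an isomorphism, the original map $H^1(F,\mathrm{U}_1)\to H^1(F,\mathrm{U}_2)$ itself vanishes, so its kernel has order $2$ and $d(G_0,H_0,F)=2$. By contrast, in the similitude case the analogous center $Z_{\mathrm{GU}_2}=\Res_{E/F}\BG_m$ has trivial $H^1$ by Hilbert 90, giving $d=1$; the discrepancy is precisely the failure of Hilbert 90 for the anisotropic torus $\mathrm{U}_1$.

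For part (2), the quasi-split group $\mathrm{U}_6$ has exactly two regular nilpotent orbits in $\Fg(F)$, indexed by $F^\times/N_{E/F}(E^\times)$, so automatically $\CN(G,H,\xi)\subseteq \{\CO_{reg,1},\CO_{reg,2}\}$. The substantive step is to show both orbits belong to $\CN(G,H,\xi)$, and for this I would invoke the Lie algebra local trace formula for the unitary Ginzburg-Rallis model established in \cite{WZ}, arguing as in the Gan-Gross-Prasad proposition above: choose a strongly cuspidal test function whose Fourier-transformed quasi-character is concentrated near a regular semisimple stable class that is null with respect to $H$ and associated via Lemma \ref{Shalika germ} to a prescribed $\CO_{reg,i}$, then compare the geometric and spectral sides of the trace formula to force this class to meet $\Xi+\Fh_0^{\perp}(F)+\Fn(F)$. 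The main obstacle, flagged explicitly in Remark \ref{rmk stable}, is the matching forced by $d(G_0,H_0,F)=2$: each stable class in the slice $\Xi+\Fh_0^{\perp}(F)+\Fn(F)$ splits into exactly two $H(F)$-orbits, and one must verify that these two orbits are precisely the ones associated via the Shalika germ of Lemma \ref{Shalika germ} to $\CO_{reg,1}$ and $\CO_{reg,2}$ respectively. This matching, which is implicitly used in \cite{WZ}, is what guarantees that both regular nilpotent orbits appear in $\CN(G,H,\xi)$.
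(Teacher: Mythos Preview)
Your argument for part (1) is correct and essentially equivalent to the paper's: the paper counts the open Borel orbits in $G_0(F)/H_0(F)$ directly and finds two (indexed by $F^\times/N_{E/F}(E^\times)$), each with stabilizer the connected center $\mathrm{U}_1$ of $H_0$; you instead compute $d'(G_0,H_0,F)=|\ker(H^1(F,\mathrm{U}_1)\to H^1(F,\mathrm{U}_2))|$ cohomologically via the squaring-on-$\BZ/2\BZ$ trick. Both routes are fine and give $d=2$, $c=1$.

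Part (2), however, has a genuine gap. You propose to invoke ``the Lie algebra local trace formula for the unitary Ginzburg-Rallis model established in \cite{WZ}'' and then run the same contradiction argument as in the Gan--Gross--Prasad and similitude cases. But no such trace formula is available: the paper states explicitly (and Remark \ref{rmk stable} already warned) that in \cite{WZ} the local trace formula was \emph{not} proved for the unitary model, precisely because $d(G_0,H_0,F)=2$ forces each stable class in the slice to split into two $H(F)$-orbits and the required ``stabilization'' has not been carried out. The multiplicity formula in \cite{WZ} for $\mathrm{U}_6$ was deduced from the similitude case, not from a trace formula for $\mathrm{U}_6$ itself. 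So the input you want to cite does not exist, and the obstacle you flag at the end is not a detail to be checked but the actual reason the argument cannot proceed this way.

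The paper's workaround is to reduce (2) to the already-proved similitude result (Lemma \ref{unitary GR lemma}). Write $\Fg'=\Fg\Fu_6$ for the Lie algebra of $\mathrm{GU}_6$, so $\Fg'(F)=\Fg(F)\oplus\Fz(F)$ with $\Fz(F)=\{aI_6\mid a\in F\}$, and the unique regular nilpotent orbit $\CO_{reg}\subset\Fg'(F)$ decomposes as $\CO_{reg,1}\cup\CO_{reg,2}$ inside $\Fg(F)$. If $X\in\Fg_{reg}(F)$ is associated to $\CO_{reg,i}$ then it is associated to $\CO_{reg}$ in $\Fg'(F)$, and $X$ is null with respect to $H$ iff it is null with respect to $H'$. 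By the similitude case, almost every null class associated to $\CO_{reg}$ meets $\Xi+\Fh_0^{\perp}(F)+\Fn(F)$ up to $G'(F)$-conjugacy. The remaining point is that for \emph{null} regular semisimple $X_1,X_2\in\Fg(F)$, $G'(F)$-conjugacy coincides with $G(F)$-conjugacy: a null element is not elliptic, hence lies in a proper Levi $L\subset G'$, and every $g\in G'(F)$ factors as $g_1z$ with $g_1\in G(F)$ and $z\in Z_L(F)$ commuting with $X_1$. This elementary descent replaces the missing trace formula and yields $\CN(G,H,\xi)=\{\CO_{reg,1},\CO_{reg,2}\}$.
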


\begin{proof}
It is easy to see that there are two open Borel orbits of $G_0(F)/H_0(F)$ (corresponds to $F^{\times}/Im(N_{E/F})$ where $N_{E/F}:E^{\times} \rightarrow F^{\times}$ is the norm map) and the stabilizer of each orbit is the center of $H_0(F)$ which is connected. This implies that $d'(G_0,H_0,F)=2$ and $c(G_0,H_0,F)=1$. On the other hand, the model $(G_0(\bar{F}),H_0(\bar{F}))$ is the trilinear $\GL_2$ model which is wavefront. Hence $d(G_0,H_0,F)=d'(G_0,H_0,F)=2$. This proves (1).
\\
\\
For (2), we can not use the same argument as in the previous cases. The reason is that in \cite{WZ}, we were not able to prove the local trace formula for this model (this is largely due to the fact that the number $d(G_0,H_0,F)$ is not equal to 1, see Remark \ref{rmk stable}). Instead, we are going to use the result for the unitary similitude group case to prove (2).

Let $\Xi+\Fh_{0}^{\perp}(F)+\Fn(F)$ be the space associated to the model $(G\times H_0,H,\xi)$ as in Section \ref{Section slice nonred}. Let $\Fg'(F)$ be the Lie algebra of $\mathrm{GU}_6(F)$, $\CO_{reg}$ be the unique nilpotent orbit of $\Fg'(F)$, and $(G',H',\xi)$ be the model in the unitary similitude group case. Then $\CO_{reg}=\CO_{reg,1}\cup \CO_{reg,2}$ and $\Fg'(F)=\Fg(F)\oplus \Fz(F)$ where $\Fz(F)=\{aI_6|\; a\in F\}$ belongs to the center of $\Fg'(F)$. Moreover, $\Xi+\Fh_{0}^{\perp}(F)+\Fn(F)+\Fz(F)$ is the space associated the model $(G',H',\xi)$.

Since $\CO=\CO_{reg,1}\cup \CO_{reg,2}$, if a regular semisimple element $X\in \Fg(F)$ is associate to $\CO_{reg,1}$ (resp. $\CO_{reg,2}$), then it is associated to $\CO$ (as an element in $\Fg'(F)$). Moreover, $X$ is null with respect to $H$ if and only if it is null with respect to $H'$. Hence by Lemma \ref{unitary GR lemma}, we know that for almost all regular semisimple $G(F)'$-conjugacy classes in $\Fg(F)$, if the conjugacy class is null with respect to $H$ and if it is associated to $\CO_{reg,1}$ (resp. $\CO_{reg,2}$), then the conjugacy class has nonempty intersection with $\Xi+\Fh_{0}^{\perp}(F)+\Fn(F)$. As a result, in order to prove the lemma, it is enough to prove the following statement.

\begin{itemize}
\item[(3)] For all regular semisimple elements $X_1,X_2\in \Fg_{reg}(F)$, if $X_1$ and $X_2$ are null with respect to $H$, then $X_1$ and $X_2$ are $G'(F)$-conjugated to each other if and only if they are $G(F)$-conjugated to each other.
\end{itemize}
In fact, since $X_1$ is null with respect to $H$, it is not elliptic regular semisimple. Let $T(F)=G_{X_1}'(F)$, and $A_T(F)$ be the maximal split subtorus of $T(F)$. Then $L(F)=Z_{G'}(A_T)(F)$ is a proper Levi subgroup of $G'(F)$. We have $X_1\in \Fl(F)$. In particular, $X_1$ commutes with $Z_L(F)$. Then (3) follows from the fact that every element $g\in G'(F)$ can be written as $g=g_1 z$ with $g_1\in G(F)$ and $z\in Z_L(F)$. This finishes the proof of the lemma and hence the proof of \eqref{GR unitary}.
\end{proof}

\subsection{The Galois model}\label{section Galois}
Let $E/F$ be a quadratic extension, $H$ be a connected reductive group defined over $F$, and $G=Res_{E/F}H$. Let $\chi$ be a character of $H(F)$. For any irreducible smooth representation $\pi$ of $G(F)$, define the multiplicity
$$m(\pi,\chi)=\dim(\Hom_{H(F)} (\pi,\chi)).$$
In \cite{B18}, Beuzart-Plessis proved the multiplicity formula for this model
$$m(\pi,\chi)=\sum_{T\in \CT_{ell}(H)} |W(H,T)|^{-1} \int_{T(F)/A_H(F)} \chi(t)^{-1} D^H(t)\theta_{\pi}(t) dt $$
where $\CT_{ell}(H)$ is the set of all maximal elliptic tori of $H(F)$. We want to show that
\begin{equation}\label{Galois model}
m_{geom}(\pi,\chi)=\sum_{T\in \CT_{ell}(H)} |W(H,T)|^{-1} \int_{T(F)/A_H(F)} \chi(t)^{-1} D^H(t)\theta_{\pi}(t)dt.
\end{equation}

For $T\in \CT_{ell}(H)$, $H_T(F)=Z_H(T)(F)=T(F)$ and the model $(G_T(F),H_T(F))$ is equal to the abelian model $(T(E),T(F))$. This implies that $|Z_H(T)(F):H_T(F)|=d(G_T,H_T,F)=c(G_T,H_T,F)=1$ and $\CN(G_T,H_T)=\{0\}$. Hence in order to prove \eqref{Galois model}, it is enough to show that the set $\CT(G,H)$ is equal to $\CT_{ell}(H)$. It is easy to see from the definition that $\CT_{ell}(H)\subset \CT(G,H)$. For the other direction, let $T(F)\in \CT(G,H)$. Then $(G_T,H_T)=(Res_{E/F}H_T, H_T)$. In particular, it is minimal if and only if $H_T$ is abelian (i.e. it is a maximal torus of $H$). By Definition \ref{defn support}(3), we know that $T(F)=T^\circ(F)=H_T(F)$ is a maximal torus of $H(F)$. By Definition \ref{defn support}(4), we know that $T(F)/A_H(F)$ is compact. This implies that $T\in \CT_{ell}(H)$ and proves \eqref{Galois model}.

\subsection{The Shalika model}\label{section Shalika}
Let $G=\GL_{2n}$ and $H=H_0\ltimes N$ with
$$H_0=\{\begin{pmatrix}h&0\\0&h\end{pmatrix}|\; h\in \GL_n\},\; N=\{\begin{pmatrix}I_n&X\\0&I_n\end{pmatrix}|\; X\in Mat_{n\times n}\}.$$
Given a multiplicative character $\chi:F^{\times}\rightarrow \BC^{\times}$, we can define a character $\omega\otimes \xi$ of $H(F)$ to be
$$\omega\otimes\xi(\begin{pmatrix}h&0\\0&h\end{pmatrix}  \begin{pmatrix}I_n&X\\0&I_n\end{pmatrix}) :=\psi(\tr(X))\chi(\det(h)).$$
For any irreducible smooth representation $\pi$ of $G(F)$, define the multiplicity
$$m(\pi,\omega\otimes\xi)=\dim(\Hom_{H(F)} (\pi,\omega\otimes\xi)).$$
The pair $(G,H)$ is called the Shalika model, it is the Whittaker induction of the model $(H_0\times H_0,H_0,\xi)=(\GL_n\times \GL_n,\GL_n,\xi)$. In a joint work with Beuzart-Plessis \cite{BW}, we have proved the multiplicity formula
$$m(\pi,\omega\otimes\xi)=\sum_{T\in \CT_{ell}(H_0)} |W(H_0,T)|^{-1} \int_{T(F)/Z_G(F)} \omega(t)^{-1}D^{H}(t)c_{\theta_{\pi},\CO_t}(t)dt$$
where $\CT_{ell}(H_0)$ is the set of all maximal elliptic tori of $H_0(F)$, and for $T\in \CT_{ell}(H_0)$, $t\in T(F)_{reg}$, $\CO_t$ is the unique regular nilpotent orbit in $\Fg_t(F)$. We want to show that
\begin{equation}\label{Shalika model}
m_{geom}(\pi,\omega\otimes\xi)=\sum_{T\in \CT_{ell}(H_0)} |W(H_0,T)|^{-1} \int_{T(F)/Z_G(F)} \omega(t)^{-1}D^{H}(t)c_{\theta_{\pi},\CO_t}(t)dt.
\end{equation}

For $T\in \CT_{ell}(H_0)$, let $K/F$ be the degree $n$ extension such that $T(F)\simeq K^{\times}$. Then the model $(G_T,H_T,\xi)$ is the just the Whittaker model for $\GL_2(K)$. By the result in Section \ref{section Whittaker} for the Whittaker model, we know that in order to prove \eqref{Shalika model}, it is enough to show that $\CT(G,H)=\CT_{ell}(H_0)$. It is clear that $\CT_{ell}(H_0)\subset \CT(G,H)$. For the other direction, let $T\in \CT(G,H)$. The model $(G_T,H_T)$ is the Whittaker induction of the model $(H_{0,T}\times H_{0,T},H_{0,T},\xi)$. Since it is minimal, we know that $H_{0,T}$ is abelian (i.e. it is a torus of $H_0$). By the same argument as in the Galois model case, we have $T\in \CT_{ell}(H_0)$. This proves \eqref{Shalika model}.

\section{The proof of Theorem \ref{main K-type}(1)}
\subsection{The geometric multiplicity}
Let $F=\BR,\;G=\GL_n$ and $H=\SO_n=\{g\in \GL_n|\; gg^{t}=I_n\}$. Then $H(\BR)$ is a maximal connected compact subgroup of $G(\BR)$. Let $\pi$ be a finite length smooth representation of $\GL_n(\BR)$ and $\omega$ be a finite dimensional representation of $\SO_n(\BR)$. The goal of this section is to prove Theorem \ref{main K-type}(1). In other words, we need to prove the multiplicity formula
$$m(\pi,\omega)=m_{geom}(\pi,\omega)$$
where $m(\pi,\omega)=\dim(\Hom_{\SO(F)}(\pi,\omega))$ and the geometric multiplicity $m_{geom}(\pi,\omega)$ was defined in Section \ref{section K-types}. In this subsection, we will give an explicit expression of $m_{geom}(\pi,\omega)$ (see Proposition \ref{geom multiplicity GL(n) SO(n)}).

\begin{defn}
$I(n)=\{(n_1,n_2,k)\in (\BZ_{\geq 0})^3|\; n_1+2n_2+2k=n\}.$ For $(n_1,n_2,k)\in I(n)$, if $n$ is even ($\iff$ $n_1$ is even), let $T_{n_1,n_2,k}$ be the abelian subgroup of $\SO_n(\BR)$ defined by
$$T_{n_1,n_2,k}(\BR)=\{diag(\pm I_{n_1},\pm I_{2n_2},t)|\; t\in (\BC^1)^k\}$$
where $\BC^1$ is the group of norm 1 element in $\BC$ and we identify it with $\SO_2(\BR)$ via the isomorphism $e^{2\pi i\theta}\mapsto \begin{pmatrix} \cos \theta & \sin \theta \\ -\sin \theta & \cos \theta \end{pmatrix}$. In particular, $t\in (\BC^1)^k$ becomes an element of $\SO_{2k}(\BR)\subset \GL_{2k}(\BR)$ and $diag(\pm I_{n_1},\pm I_{2n_2},t)$ are elements of $\SO_n(\BR)\subset \GL_n(\BR)$.

Similarly, if $n$ is odd ($\iff$ $n_1$ is odd), we define
$$T_{n_1,n_2,k}(\BR)=\{diag(I_{n_1},\pm I_{2n_2},t)|\; t\in (\BC^1)^k\}\subset \SO_n(\BR).$$
\end{defn}

\begin{lem}\label{support GL(n),SO(n) even}
Assume that $n$ is even. The set $\CT(G,H)$ (defined in Definition \ref{defn support}) is the union of $T_{n_1,n_2,k}(\BR)$ where $(n_1,n_2,k)\in I(n)$ with $n_1\geq 2n_2$.
\end{lem}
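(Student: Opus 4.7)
The plan is to prove both inclusions. For the forward direction, I fix $(n_1,n_2,k)\in I(n)$ with $n_1\geq 2n_2$ and verify conditions (1)--(4) of Definition \ref{defn support} for $T=T_{n_1,n_2,k}$. A direct block-diagonal computation gives $Z_G(T)(\BR)=\GL_{n_1}(\BR)\times \GL_{2n_2}(\BR)\times \GL_1(\BC)^k$, whose center intersected with $\SO_n(\BR)$ is $\{\pm 1\}\times \{\pm 1\}\times (\BC^1)^k$; the determinant-one condition is automatic since $n_1$ is even, giving condition (2). The factors $(\GL_m,\SO_m)$ and $(\Res_{\BC/\BR}\GL_1,\mathrm{U}_1)$ are each minimal by Lemma \ref{lem minimal spherical var}, so $(G_T,H_T)$ is a minimal spherical pair with $G_T$ quasi-split, giving (1); compactness of $T/Z_{G,H}$ gives (3) for free; and (4) is witnessed by $t=\diag(I_{n_1},-I_{2n_2},e^{i\theta_1},\ldots,e^{i\theta_k})$ with generic $\theta_j$, whose eigenvalues separate all the blocks.

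For the reverse direction, let $T\in \CT(G,H)$. Since $T$ is a closed abelian subgroup of the compact group $\SO_n(\BR)$, it acts semisimply on $\BC^n$ with joint eigenspace decomposition $\BC^n=\bigoplus_\chi V_\chi$ indexed by characters $\chi:T\to \BC^\times$. The $T$-invariance of the symmetric form $q$ and the orthogonality $V_\chi\perp V_{\chi'}$ for $\chi\chi'\neq 1$ imply that the real structure decomposes $\BR^n$ into real eigenspaces $V_\chi^{\BR}$ for real characters $\chi:T\to \{\pm 1\}$ together with two-dimensional rotational blocks $(V_\chi\oplus V_{\bar\chi})^{\BR}$ for pairs of complex conjugate characters. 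Condition (2) identifies $T$ with the $\SO_n$-intersection of the center of $Z_G(T)$, pinning down $T$ as a direct product of $\{\pm 1\}$ on each real block and $\BC^1$ on each complex pair. Condition (1) applied through Lemma \ref{lem minimal spherical var} to each $(\Res_{\BC/\BR}\GL_b, \mathrm{U}_b)$ factor (the only potentially non-minimal factor) forces $b=1$, so every complex pair has multiplicity one.

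The crucial constraint comes from condition (4): an element $t\in T$ takes only $\pm 1$ values on real blocks, so its eigenvalues separate at most two distinct real characters, while $G_t=G_T$ demands separation of all characters. Hence at most two real characters appear in $\BR^n$; together with $-I_n\in Z_{G,H}\subset T$ acting as $-I$ on every real block (hence $\chi(-I_n)=-1$ for every real $\chi$ appearing), this leaves exactly three possibilities $r\in \{0,1,2\}$ giving $T$ conjugate to some $T_{n_1,n_2,k}$ with $(n_1,n_2,k)\in I(n)$, where $n_1$ and $2n_2$ are the dimensions of the $+1$ and $-1$ eigenspaces of a fixed element realizing (4). The ambiguity between the two labels is resolved by noting that the permutation swapping an $n_1$-block with a $2n_2$-block has sign $(-1)^{n_1\cdot 2n_2}=1$ (as $n_1$ is even), hence lies in $\SO_n(\BR)$, giving an $\SO_n(\BR)$-conjugacy between $T_{n_1,n_2,k}$ and $T_{2n_2,n_1/2,k}$. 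Choosing the representative with $n_1\geq 2n_2$ canonicalizes the class, and uniqueness of the representative follows from the invariance of the block dimensions under conjugation. The main obstacle will be making the condition (4) argument precise enough to rule out larger $2$-groups in $T/T^\circ$ that could a priori produce more than two joint real eigenspaces on the fixed subspace $V_0$.
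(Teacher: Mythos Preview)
Your argument is correct, but it proceeds in a different order from the paper's. The paper uses condition~(4) \emph{first}: it picks $t\in T$ with $(G_t,H_t)=(G_T,H_T)$ minimal, analyzes the eigenspace decomposition of that single element (immediately getting $t=\diag(I_{n_1},-I_{2n_2},t_0)$ with $t_0$ regular in $\GL_{2k}$ once minimality forces each $(\Res_{\BC/\BR}\GL_{k_i},\mathrm{U}_{k_i})$ factor to have $k_i=1$), and only then invokes condition~(2) to recover $T$ as $Z_{G_t}\cap H$. You instead decompose $\BR^n$ by characters of the whole group $T$, use condition~(2) to pin down $T$ from its isotypic components, condition~(1) to force the complex multiplicities to $1$, and condition~(4) last to bound the number $r$ of real characters by~$2$. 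Both routes are valid; the paper's is slightly more economical since working with a single element of $\SO_n(\BR)$ automatically gives even-dimensional $(-1)$-eigenspace (from $\det=1$), whereas in your framework the evenness of the two real block sizes needs a separate observation.

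Two small points. First, your concern that the condition~(4) step is ``the main obstacle'' is unfounded: if $r\geq 3$ then for any $t\in T$ at least two of the real characters agree on $t$ (pigeonhole on $\{\pm1\}$), so the corresponding $\GL$ blocks merge in $G_t$ and $G_t\neq G_T$; this is exactly the argument you wrote, and it is complete as it stands. Second, the remark about $-I_n\in Z_{G,H}\subset T$ is superfluous for bounding $r$ and does not do the work you seem to assign it; the bound $r\leq 2$ is purely a consequence of~(4). What \emph{is} needed in the $r=2$ case---and comes for free in the paper's single-element approach---is that both real block sizes are even; in your setup this follows because any $t_0\in T\subset\SO_n(\BR)$ separating the two real characters has $(-1)$-eigenspace of even dimension, forcing one block size even, and then $n$ even forces the other.
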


\begin{proof}
It is easy to see that $T_{n_1,n_2,k}(\BR)\in \CT(G,H)$. So it is enough to prove the other direction. Let $t$ be a semisimple element of $H(\BR)=\SO_n(\BR)$ such that $(G_t,H_t)$ is a minimal spherical pair. After conjugation, we may assume that $t=diag(I_{n_1},-I_{2n_2},t_0)$ where $t_0$ is a semisimple element in $\SO_{2k}(\BR)$ such that $t_0\pm I_{2k}\in \GL_{2k}(\BR)$ (i.e. $\pm 1$ are not the eigenvalues of $t_0$). Here $2k=n-n_1-2n_2$.

Since $\pm 1$ are not the eigenvalues of $t_0$, the centralizer of $t_0$ in $\GL_{2k}(\BR)$ is of the form (note that all the eigenvalues of $t$ belong to $\BC^1$)
$$\GL_{k_1}(\BC)\times \cdots \times \GL_{k_{m}}(\BC)$$
with $k=k_1+\cdots +k_m$. Then
$$G_t(\BR)=\GL_{n_1}(\BR)\times \GL_{2n_2}(\BR)\times \GL_{k_1}(\BC)\times \cdots \times \GL_{k_{m}}(\BC),$$
$$H_t(\BR)=\SO_{n_1}(\BR)\times \SO_{2n_2}(\BR)\times  \mathrm{U}_{k_1}(\BR) \times \cdots \times \mathrm{U}_{k_{m}(\BR)}.$$
Since $(G_t,H_t)$ is a minimal spherical pair, we know that $(Res_{\BC/\BR}\GL_{k_i},\mathrm{U}_{k_i})$ is a minimal spherical pair for $1\leq i\leq m$. This implies that $k_i=1$ for $1\leq i\leq m$. In other words, $t_0$ is a regular semisimple element of $\GL_{2k}(\BR)$.

Now we are ready to prove the lemma. Let $T(\BR)\in \CT(G,H)$. By conditions (1) and (4) of Definition \ref{defn support}, there exists $t\in T(\BR)$ such that $(G_T,H_T)=(G_t,H_t)$ is a minimal spherical pair. By the discussion above, up to conjugation, we may assume that $t=diag(I_{n_1},-I_{2n_2}, t_0)$ where $t_0\in \SO_{2k}(\BR)$ is a regular semisimple element of $\GL_{2k}(\BR)$ and $(n_1,n_2,k)\in I(n)$. Combining with condition (2) of Definition \ref{defn support}, we have
$$T(\BR)=Z_{G_t}(\BR)\cap H(\BR)=\{diag(\pm I_{n_1},\pm I_{2n_2},t')|\;t'\in T_0(\BR)\}$$
where $T_0(\BR)$ is the centralizer of $t_0$ in $\SO_{2k}(\BR)$ which is a maximal torus of $\SO_{2k}(\BR)$. Up to conjugation, we may assume that $n_1\geq 2n_2$. Then the lemma follows from the fact that every maximal torus of $\SO_{2k}(\BR)$ is conjugated to the torus $(\BC^1)^k$. This proves the lemma.
\end{proof}

\begin{lem}\label{support GL(n),SO(n) odd}
Assume that $n$ is odd. Then the set $\CT(G,H)$ is the union of $T_{n_1,n_2,k}(\BR)$ where $(n_1,n_2,k)\in I(n)$.
\end{lem}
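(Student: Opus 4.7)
The plan is to mimic the proof of Lemma \ref{support GL(n),SO(n) even} with the observation that the parity constraint introduced by $n$ being odd automatically rigidifies the $\pm 1$ eigenspace decomposition, which removes the need for a size restriction between $n_1$ and $2n_2$.

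First I would verify that each $T_{n_1,n_2,k}(\BR)$ with $(n_1,n_2,k) \in I(n)$ belongs to $\CT(G,H)$. Picking the element $t = \mathrm{diag}(I_{n_1}, -I_{2n_2}, t_0)$ for a regular $t_0 \in (\BC^1)^k \subset \SO_{2k}(\BR)$, one computes
\[
G_t(\BR) = \GL_{n_1}(\BR) \times \GL_{2n_2}(\BR) \times \GL_1(\BC)^k, \quad H_t(\BR) = \SO_{n_1}(\BR) \times \SO_{2n_2}(\BR) \times \mathrm{U}_1(\BR)^k.
\]
Both are products of minimal reductive spherical pairs (the diagonal symmetric pair in each factor), so $(G_t,H_t)$ is minimal spherical, and $G_t(\BR)$ is a product of general linear groups over $\BR$ or $\BC$, hence quasi-split. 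Conditions (2)–(4) of Definition \ref{defn support} are then routine: $T_{n_1,n_2,k}(\BR)$ is the full intersection of $H(\BR)$ with the center of $G_t(\BR)$, it is compact modulo the center of $G$, and by construction it contains a regular element $t$ with $(G_t,H_t)=(G_T,H_T)$.

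For the converse direction, I would start with $T(\BR) \in \CT(G,H)$ and pick $t \in T(\BR)$ with $(G_t,H_t)=(G_T,H_T)$ minimal spherical, which exists by Definition \ref{defn support}(4). Up to $H(\BR)$-conjugation, one may decompose $\BR^n$ into the $+1$, $-1$, and ``other'' eigenspaces of $t$, giving $t = \mathrm{diag}(I_{n_1}, -I_{n_1'}, t_0)$ with $t_0 \in \SO_{2k}(\BR)$ regular in $\GL_{2k}(\BR)$ (the argument that $t_0$ must be regular proceeds identically to the even case: each $\mathrm{U}_{k_i}$-factor of $H_t$ forces $k_i = 1$ by minimality). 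Here comes the odd-parity point: since $t \in \SO_n(\BR)$ has $\det t = 1$, the $-1$ eigenspace must have even dimension, so $n_1' = 2n_2$ and $n_1 = n - 2n_2 - 2k$ is odd. Unlike the even case, we cannot conjugate the $+1$ and $-1$ blocks into each other (their dimensions have different parities), so no assumption like $n_1 \geq 2n_2$ needs to be imposed.

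Finally, condition (2) of Definition \ref{defn support} gives $T(\BR) = Z_{G_t}(\BR) \cap H(\BR)$, and combined with the fact that every maximal torus of $\SO_{2k}(\BR)$ is $\SO_{2k}(\BR)$-conjugate to $(\BC^1)^k$, this identifies $T(\BR)$ (up to conjugation in $H(\BR)$) with $T_{n_1, n_2, k}(\BR)$ for the triple $(n_1, n_2, k) \in I(n)$ extracted above. The only subtle step is checking that no ambient conjugation in $\GL_n(\BR)$ produces a further identification: this is handled by the parity argument above, which shows that the triple $(n_1, n_2, k)$ is an invariant of the $H(\BR)$-conjugacy class of $T(\BR)$. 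I do not anticipate a serious obstacle — essentially all the work has already been done in the proof of the even case, and the odd case is in fact simpler because the parity of $n_1$ removes the redundancy that required the $n_1 \geq 2n_2$ normalization.
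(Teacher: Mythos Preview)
Your proposal is correct and follows exactly the approach the paper intends: the paper's own proof simply reads ``The proof is similar to the previous lemma, we will skip it here,'' and what you have written is precisely the natural adaptation of the even-case argument, including the correct observation that the parity of $n_1$ (forced odd) removes the need for the normalization $n_1 \geq 2n_2$. One tiny slip: condition (3) asks for compactness modulo $Z_{G,H}(\BR)$, not modulo $Z_G(\BR)$; but since $n$ is odd we have $Z_{G,H}(\BR)=\{I_n\}$ and $T_{n_1,n_2,k}(\BR)$ is compact outright, so the conclusion stands.
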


\begin{proof}
The proof is similar to the previous lemma, we will skip it here.
\end{proof}

\begin{cor}\label{support GL(n),SO(n)}
The geometric multiplicity $m_{geom}(\pi,\omega)$ is supported on
$$\{diag(I_{n_1},-I_{2n_2},t)|\; t\in (\BC^1)^k\}\cup \{diag(-I_{n_1},I_{2n_2},t)|\; t\in (\BC^1)^k\},\; (n_1,n_2,k)\in I(n)\;\text{with}\; n_1\geq 2n_2$$
when $n$ is even; and it is supported on
$$\{diag(I_{n_1},-I_{2n_2},t)|\; t\in (\BC^1)^k\},\; (n_1,n_2,k)\in I(n)$$
when $n$ is odd.
\end{cor}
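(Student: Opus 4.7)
My plan is to read off the support from Lemmas \ref{support GL(n),SO(n) even} and \ref{support GL(n),SO(n) odd} together with the definition of $T_H(\BR)=\bigcup_{\gamma\in C(T,H)}\gamma T^\circ(\BR)$. The preceding two lemmas already identify $\CT(G,H)$ as the set of tori $T_{n_1,n_2,k}(\BR)$ for $(n_1,n_2,k)\in I(n)$ (with the additional constraint $n_1\geq 2n_2$ when $n$ is even). So it remains to compute $T^\circ_{n_1,n_2,k}(\BR)$ and to decide which cosets of $T^\circ$ in $T$ belong to $C(T_{n_1,n_2,k},H)$.

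First I would observe that the neutral component is $T^\circ_{n_1,n_2,k}(\BR)=\{\diag(I_{n_1},I_{2n_2},t)\mid t\in(\BC^1)^k\}$, so that the component group is represented by the signs on the first two blocks, subject to the determinant-one condition cutting $T$ out of $\SO_n(\BR)$. Then for a representative $\gamma=\diag(\epsilon_1 I_{n_1},\epsilon_2 I_{2n_2},1)$ with $\epsilon_i\in\{\pm1\}$, I would write down $\gamma\cdot\diag(I_{n_1},I_{2n_2},t_0)$ for generic $t_0\in(\BC^1)^k$ (so that $t_0\pm I$ is invertible) and compare the centralizer in $\GL_n(\BR)$ to
\[
G_T(\BR)=\GL_{n_1}(\BR)\times\GL_{2n_2}(\BR)\times(\GL_1(\BC))^k.
\]
If $\epsilon_1=\epsilon_2$, the first two blocks have a common eigenvalue and hence merge into a single $\GL_{n_1+2n_2}$ factor, strictly enlarging the centralizer; if $\epsilon_1\neq\epsilon_2$, the blocks remain separate and the centralizer equals $G_T(\BR)$. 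By the definition of $C(T,H)$ this yields: when $n_1>0$ and $n_2>0$, $C(T,H)$ consists exactly of the two cosets represented by $\diag(I_{n_1},-I_{2n_2},1)$ and $\diag(-I_{n_1},I_{2n_2},1)$; when $n_1>0$ and $n_2=0$, both cosets of the component group belong to $C(T,H)$; and when $n_2>0$ and $n_1=0$, which doesn't occur under $n_1\geq 2n_2$ unless also $n_2=0$, only the identity coset exists and it belongs to $C(T,H)$.

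In the odd case, since $n_1$ is odd, the element $\diag(-I_{n_1},I_{2n_2},t)$ has determinant $-1$ and does not lie in $\SO_n(\BR)$. So the only available cosets in $T_{n_1,n_2,k}(\BR)/T^\circ(\BR)$ are those represented by $\diag(I_{n_1},\pm I_{2n_2},1)$, and the argument above shows that precisely the coset $\diag(I_{n_1},-I_{2n_2},1)$ lies in $C(T,H)$ when $n_2>0$, while when $n_2=0$ the unique (identity) coset is in $C(T,H)$. In both situations the formula in the corollary ($\{\diag(I_{n_1},-I_{2n_2},t)\}$) correctly describes $T_H(\BR)$, including the degenerate case where $n_2=0$ and the two parametrizations collapse to the same set.

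The only step requiring any care is the centralizer computation just above (in particular the verification that for generic $t_0\in(\BC^1)^k$ one truly has $Z_{\GL_n(\BR)}(\diag(\epsilon_1 I_{n_1},\epsilon_2 I_{2n_2},t_0))=\GL_{n_1}(\BR)\times\GL_{2n_2}(\BR)\times(\GL_1(\BC))^k$ when $\epsilon_1\neq\epsilon_2$), but this is elementary linear algebra given that $t_0$ has $k$ distinct pairs of complex-conjugate eigenvalues, none of them equal to $\pm1$. Taking the union of $T_H(\BR)$ over all $T_{n_1,n_2,k}\in\CT(G,H)$ then produces exactly the two sets described in the statement, which completes the corollary.
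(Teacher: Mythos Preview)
Your proof is correct and follows the same route as the paper, which simply states that the corollary is a direct consequence of the two preceding lemmas. You have made explicit the step the paper leaves to the reader: computing the connected component $T^{\circ}_{n_1,n_2,k}(\BR)$ and determining which cosets lie in $C(T_{n_1,n_2,k},H)$ via the centralizer comparison $G_t$ versus $G_T$. One minor remark: the definition of $C(T,H)$ requires $(G_t,H_t)=(G_T,H_T)$, and you only check the $G$-part; however, in this situation $Z_H(t)=Z_G(t)\cap \SO_n$ and $Z_H(T)=Z_G(T)\cap \SO_n$ have the same neutral component whenever $G_t=G_T$ (both equal $\SO_{n_1}\times\SO_{2n_2}\times(\BC^1)^k$), so no gap results.
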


\begin{proof}
This is a direct consequence of the previous two lemmas.
\end{proof}

\begin{lem}\label{GL(n) SO(n) lemma 1}
\begin{enumerate}
\item $(G,H)$ is a minimal spherical pair.
\item $d(G,H,\BR)=1,\; c(G,H,\BR)=2^{n-1}$.
\item $\CN(G,H,1)=\{\CO\}$ where $\CO$ is the unique regular nilpotent orbit of $\Fg(\BR)=\Fg\Fl_n(\BR)$.
\end{enumerate}
\end{lem}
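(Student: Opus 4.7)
The plan is to verify the three assertions in sequence, using the fact that $(G,H)=(\GL_n,\SO_n)$ is the symmetric pair attached to the involution $\theta(g)=(g^t)^{-1}$. For (1), I would invoke Lemma \ref{lem minimal spherical var} with a direct dimension count: taking $B$ to be the upper triangular Borel of $\GL_n$ gives $\dim G-\dim B=n^2-n(n+1)/2=n(n-1)/2=\dim H$, while $Z_G\cap H$ is finite so $\dim(Z_G\cap H)=0$. The equality of Lemma \ref{lem minimal spherical var} therefore holds and $(G,H)$ is a minimal spherical pair.

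For (2), I would work with $B=TN$ where $T$ is the diagonal torus. Since $\theta$ acts on $T$ by inversion, the subspace $\Ft_H$ from Section \ref{Section constant d(G,H,F)} is all of $\Ft$ and meets $\Ft_{reg}$, so Lemma \ref{H intersect B} yields $B(\BR)\cap H(\BR)\subset T(\BR)$. A direct computation then identifies this intersection with
\[
\{\diag(\epsilon_1,\ldots,\epsilon_n)\mid \epsilon_i=\pm 1,\ \epsilon_1\cdots\epsilon_n=1\},
\]
a discrete group of cardinality $2^{n-1}$, giving $c(G,H,\BR)=2^{n-1}$. To compute $d'(G,H,\BR)$, I would invoke the Iwasawa decomposition $\GL_n(\BR)=B(\BR)\cdot\mathrm{O}(n)$; since $\diag(-1,1,\ldots,1)\in B(\BR)$ absorbs the non-identity component of $\mathrm{O}(n)$, we obtain $\GL_n(\BR)=B(\BR)\cdot\SO_n(\BR)$, so there is a unique open double coset and $d'(G,H,\BR)=1$. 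Finally, because $(G,H)$ is a symmetric pair it is wavefront with $W_X=W_G$ (as noted in the remark following the lemma on $\Ft^0$ in Section \ref{Section constant d(G,H,F)}), so $d(G,H,\BR)=d'(G,H,\BR)\cdot|W_G|/|W_X|=1$.

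For (3), the key observation is that $\GL_n(\BR)$ is quasi-split and its regular nilpotent elements are all conjugate to the principal one, so $Nil_{reg}(\Fg(\BR))=\{\CO\}$ is a singleton and every regular semisimple class of $\Fg(\BR)$ is automatically associated to $\CO$ in the sense of Section \ref{Section nilpotent orbit}. The relevant tangent space $\Fh^\perp(\BR)$ is the space of real symmetric matrices. Combining Lemmas \ref{support GL(n),SO(n) even} and \ref{support GL(n),SO(n) odd} to describe $\CT(G,H)^\circ$, I would identify $\CL(G,H)$ with the set of standard Levi subgroups of the form $(\GL_2)^m\times(\GL_1)^{n-2m}$ with $m\geq 1$. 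For a regular semisimple $X\in\Fg(\BR)$ one then has $L(X)=(\GL_2)^s\times(\GL_1)^{n-2s}$, where $s$ counts the complex conjugate pairs of eigenvalues of $X$; hence $X$ is null with respect to $H$ iff $s=0$, iff all eigenvalues of $X$ are real, iff $X$ is diagonalizable over $\BR$. Every such conjugacy class contains a real diagonal matrix, which is symmetric and hence lies in $\Fh^\perp(\BR)$. This gives $\CO\in\CN(G,H,1)$ and therefore $\CN(G,H,1)=\{\CO\}$. The main subtlety I anticipate is the uniform identification of $\CL(G,H)$ for both parities of $n$, but this is a straightforward extension of the analysis that Section \ref{Section nilpotent orbit} spells out for $(\GL_{2n},\SO_{2n})$.
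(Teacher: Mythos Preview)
Your proposal is correct and follows essentially the same approach as the paper's own proof, only with considerably more detail spelled out. The paper dispatches (1) as trivial, proves (2) via the Iwasawa decomposition and the wavefront property of symmetric pairs exactly as you do, and for (3) simply refers back to the discussion at the end of Section~\ref{Section nilpotent orbit}, which is precisely the argument you reproduce (identifying $\CL(G,H)$, characterizing ``null'' as having all real eigenvalues, and observing that such classes meet the symmetric matrices $\Fh^\perp(\BR)$).
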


\begin{proof}
(1) is trivial. For (2), let $B(\BR)$ be the upper triangular Borel subgroup of $G(\BR)$. Since $(G,H)$ is a symmetric pair which is wavefront, we have $d(G,H,\BR)=d(G,H,\BR)'$. By the Iwasawa decomposition, we have $G(\BR)=B(\BR)H(\BR)$ and $B(\BR)\cap H(\BR)\simeq (\BZ/2\BZ)^{n-1}$. This implies that $d(G,H,\BR)=d(G,H,\BR)'=1$ and $c(G,H,\BR)=2^{n-1}$. (3) follows from the argument in the end of Section 5.
\end{proof}

Given $(n_1,n_2,k)\in I(n)$, and let $T=T_{n_1,n_2,k}$. Then the model $(G_T,H_T)$ is the product of the models $(\GL_{n_1}(\BR),\SO_{n_1}(\BR))$, $(\GL_{2n_2}(\BR),\SO_{2n_2}(\BR))$ and $((\BC^1)^k,(\BC^1)^k)$. The following lemma is easy to verify.

\begin{lem}\label{GL(n) SO(n) lemma 2}
\begin{enumerate}
\item The number $|Z_{H}(T)(\BR):H_T(\BR)|$ is equal to 1 if $n_1n_2=0$, and is equal to 2 if $n_1n_2\neq 0$.
\item If $n_1=n_2=0$ (this only happens when $n$ is even), then $|W(H,T)|=2^{k-1}k!=2^{n-k-n_1-2n_2-1}k!$. If $n_1=2n_2\neq 0$ (this only happens when $n$ is even and $n\geq 4$), then $|W(H,T)|=2\times 2^{k}k!=2^{n-k-n_1-2n_2+1}k!$. If $n_1\neq 2n_2$, then $|W(H,T)|=2^kk!=2^{n-k-n_1-2n_2}k!$.
\end{enumerate}
\end{lem}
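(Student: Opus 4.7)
The plan is to compute $Z_H(T)(\BR)$ and $N_H(T)(\BR)$ using the explicit eigenspace decomposition $V = \BR^n = V_1 \oplus V_2 \oplus V_3$ with $\dim V_1 = n_1$, $\dim V_2 = 2n_2$, $\dim V_3 = 2k$, where $V_1, V_2$ are the $\pm 1$-eigenspaces of any element of $T$ whose torus part in $(\BC^1)^k$ is regular, and $V_3$ carries the $(\BC^1)^k$-action.

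For part (1), centralizing $T$ in $\GL_n(\BR)$ forces preservation of the three subspaces and commutation with $(\BC^1)^k$ on $V_3$; since a maximal torus of the compact group $\SO_{2k}(\BR)$ is its own centralizer, the $V_3$-factor is forced to lie in $(\BC^1)^k$. Intersecting with $H = \SO_n$ then gives
$$Z_H(T)(\BR) = \{(g_1, g_2, g_3) \in \mathrm{O}_{n_1}(\BR) \times \mathrm{O}_{2n_2}(\BR) \times (\BC^1)^k \,:\, \det(g_1)\det(g_2) = 1\},$$
with connected component $H_T(\BR) = \SO_{n_1}(\BR) \times \SO_{2n_2}(\BR) \times (\BC^1)^k$. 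If $n_1 = 0$ or $n_2 = 0$, the surviving $\mathrm{O}$-factor is forced into its $\SO$-part by the determinant condition, so $Z_H(T) = H_T$ and the index is $1$; if $n_1 n_2 \neq 0$, the constraint only couples the two signs, giving index $2$.

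For part (2), normalizing $T$ amounts to permuting its eigenspaces, so $N_H(T)$ decomposes according to (a) the action on the $V_3$-torus and (b) a possible swap $V_1 \leftrightarrow V_2$. The normalizer of $(\BC^1)^k$ in $\mathrm{O}_{2k}(\BR)$ is the extension by the hyperoctahedral group $(\BZ/2)^k \rtimes S_k$, and a lift of $w = (s_1, \ldots, s_k; \sigma)$ to $\mathrm{O}_{2k}(\BR)$ has determinant $(-1)^{s_1 + \cdots + s_k}$ (each $\BC^1$-inversion is a $\diag(1,-1) \in \mathrm{O}_2$, while permutations of the $\BC^1$-blocks lie in $\SO_{2k}$). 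On the other hand, the block-permutation matrix swapping $V_1$ with $V_2$ exists in $\mathrm{O}_n(\BR)$ only when $n_1 = 2n_2$, and has determinant $(-1)^{n_1}$, which is $+1$ exactly when $n_1$ is even; this swap genuinely lies in $N_H(T) \smallsetminus Z_H(T)$ because it exchanges the two sign generators of $T/T^\circ$.

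Combining these observations I would treat three cases. (i) If $n_1 = n_2 = 0$, there is no $\mathrm{O}$-factor available to compensate a $-1$ determinant, so only the even-inversion subgroup of $(\BZ/2)^k \rtimes S_k$ (the type-$D_k$ Weyl group) is realized, giving $|W(H,T)| = 2^{k-1} k! = 2^{n-k-n_1-2n_2-1} k!$. (ii) If $n_1 \neq 2n_2$ (so the block-swap is unavailable but $n_1 > 0$ or $n_2 > 0$ provides determinant compensation), the full hyperoctahedral group is realized and nothing more, giving $|W(H,T)| = 2^k k! = 2^{n-k-n_1-2n_2} k!$. (iii) If $n_1 = 2n_2 \neq 0$, both the full hyperoctahedral group and the $V_1 \leftrightarrow V_2$ swap contribute an independent $\BZ/2$, yielding $|W(H,T)| = 2 \cdot 2^k k! = 2^{n-k-n_1-2n_2+1} k!$. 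The only real obstacle is careful bookkeeping of the determinant-one condition and of which generators can be lifted into $\SO_n$, but given the classical identification of $B_k$ and $D_k$ Weyl groups with Weyl groups of $\mathrm{O}_{2k}$ and $\SO_{2k}$ relative to $(\BC^1)^k$, everything should fall out cleanly.
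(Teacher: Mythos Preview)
Your argument is correct. The paper does not actually supply a proof of this lemma; it only states that ``the following lemma is easy to verify'' and moves on. Your explicit computation via the block decomposition $V_1\oplus V_2\oplus V_3$, together with the determinant bookkeeping that distinguishes the $D_k$- from the $B_k$-Weyl group on the $(\BC^1)^k$-factor and the $V_1\leftrightarrow V_2$ swap when $n_1=2n_2$, is exactly the kind of verification the paper has in mind. One very minor imprecision: to separate $V_1$ from $V_2$ you need an element of $T$ with opposite signs on the two blocks (not merely regular torus part), but since you are centralizing/normalizing the whole group $T$ rather than a single element this does not affect the argument.
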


Combining Corollary \ref{support GL(n),SO(n)}, Lemma \ref{GL(n) SO(n) lemma 1} and Lemma \ref{GL(n) SO(n) lemma 2}, we have
$$m_{geom}(\pi,\omega)=\sum_{(n_1,n_2,k)\in I(n),n_1>2n_2} \frac{1}{2^{n-k-1} k!} \int_{(\BC^1)^k} D^{\SO_n}(diag(I_{n_1},-I_{2n_2},t)) c_{\pi}(diag(I_{n_1},-I_{2n_2},t))$$
$$\theta_{\omega^{\vee}}(diag(I_{n_1},-I_{2n_2},t)) + D^{\SO_n}(diag(-I_{n_1},I_{2n_2},t)) c_{\pi}(diag(-I_{n_1},I_{2n_2},t)) \theta_{\omega^{\vee}}(diag(-I_{n_1},I_{2n_2},t))dt$$
$$+ \sum_{(n_1,n_2,k)\in I(n),n_1=2n_2\neq 0} \frac{1}{2^{n-k} k!} \int_{(\BC^1)^k}D^{\SO_n}(diag(I_{n_1},-I_{2n_2},t)) c_{\pi}(diag(I_{n_1},-I_{2n_2},t))$$
$$\theta_{\omega^{\vee}}(diag(I_{n_1},-I_{2n_2},t))+D^{\SO_n}(diag(-I_{n_1},I_{2n_2},t)) c_{\pi}(diag(-I_{n_1},I_{2n_2},t))\theta_{\omega^{\vee}}(diag(-I_{n_1},I_{2n_2},t))dt$$
$$+\frac{1}{2^{n-\frac{n}{2}-1} (\frac{n}{2}!)} \int_{(\BC^1)^{\frac{n}{2}}} D^{\SO_n}(t)c_{\pi}(t) \theta_{\omega^{\vee}}(t)dt$$
when $n$ is even, and
\begin{eqnarray*}
m_{geom}(\pi,\omega)&=&\sum_{(n_1,n_2,k)\in I(n)} \frac{1}{2^{n-k-1} k!} \int_{(\BC^1)^k} D^{\SO_n}(diag(I_{n_1},-I_{2n_2},t)) \\ &&c_{\pi}(diag(I_{n_1},-I_{2n_2},t)) \theta_{\omega^{\vee}}(diag(I_{n_1},-I_{2n_2},t)) dt
\end{eqnarray*}
when $n$ is odd where
\begin{itemize}
\item The Haar measure on $\BC^1=\SO_2(\BR)$ is the one that makes the total volume equal to 1
\item $c_{\pi}(diag(I_{n_1},-I_{2n_2},t))$ (resp. $c_{\pi}(diag(-I_{n_1},I_{2n_2},t))$) is the regular germ of $\theta_{\pi}$ of $\pi$ at $diag(I_{n_1},-I_{2n_2},t)$ (resp. $diag(-I_{n_1},I_{2n_2},t)$) defined in Section \ref{section germ parabolic}.
\item $\omega^{\vee}$ is the dual representation of $\omega$ and $\theta_{\omega^{\vee}}$ is the character of $\omega^{\vee}$.
\end{itemize}

When $n$ is even, we can replace the element $diag(-I_{n_1},I_{2n_2},t)$ in the expression of $m_{geom}(\pi,\omega)$ by $diag(I_{2n_2},-I_{n_1},t)$ because they are conjugated to each other in $\SO_n(\BR)$. Then we have
\begin{eqnarray*}
m_{geom}(\pi,\omega)&=&\sum_{(n_1,n_2,k)\in I(n)} \frac{1}{2^{n-k-1} k!} \int_{(\BC^1)^k} D^{\SO_n}(diag(I_{n_1},-I_{2n_2},t)) \\ &&c_{\pi}(diag(I_{n_1},-I_{2n_2},t)) \theta_{\omega^{\vee}}(diag(I_{n_1},-I_{2n_2},t)) dt.
\end{eqnarray*}
In other words, we get the same expression as in the odd case. To summarize, we have proved the following proposition.

\begin{prop}\label{geom multiplicity GL(n) SO(n)}
\begin{eqnarray*}
m_{geom}(\pi,\omega)&=&\sum_{(n_1,n_2,k)\in I(n)} \frac{1}{2^{n-k-1} k!} \int_{(\BC^1)^k} D^{\SO_n}(diag(I_{n_1},-I_{2n_2},t)) \\ &&c_{\pi}(diag(I_{n_1},-I_{2n_2},t)) \theta_{\omega^{\vee}}(diag(I_{n_1},-I_{2n_2},t)) dt.
\end{eqnarray*}
\end{prop}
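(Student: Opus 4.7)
The plan is to assemble the formula by substituting the classifications and constants from the preceding lemmas into the definition of $m_{geom}(\pi,\omega)$ from Section \ref{section K-types}, and then to rewrite the even-$n$ sum so that it matches the uniform expression claimed. First I would restrict the sum over $\CT(G,H)$ to the tori listed in Lemmas \ref{support GL(n),SO(n) even} and \ref{support GL(n),SO(n) odd}: for each $(n_1,n_2,k)\in I(n)$, the abelian group $T=T_{n_1,n_2,k}(\BR)$ contributes two components of $T_H(\BR)$ when $n$ is even, represented by $diag(I_{n_1},-I_{2n_2},t)$ and $diag(-I_{n_1},I_{2n_2},t)$, and one component when $n$ is odd, represented by $diag(I_{n_1},-I_{2n_2},t)$.

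Next I would compute the constants attached to each $T$. Since $(G_T,H_T)$ factors as the product of $(\GL_{n_1}(\BR),\SO_{n_1}(\BR))$, $(\GL_{2n_2}(\BR),\SO_{2n_2}(\BR))$, and the diagonal pair $((\BC^1)^k,(\BC^1)^k)$, the constants $d$, $c$, and the set $\CN$ split as products over the factors; Lemma \ref{GL(n) SO(n) lemma 1} handles the two $(\GL_m,\SO_m)$-factors and shows in particular that $|\CN(G_T,H_T,1)|=1$. Lemma \ref{GL(n) SO(n) lemma 2} provides $|Z_H(T)(\BR):H_T(\BR)|$ and $|W(H,T)|$. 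The three subcases $n_1 n_2=0$, $n_1=2n_2\neq 0$, and $n_1\neq 2n_2$ line up so that the extra factor of $2$ in $|Z_H(T)(\BR):H_T(\BR)|$ when $n_1 n_2\neq 0$ interacts with the corresponding variations in $|W(H,T)|$ to produce the common denominator $2^{n-k-1}k!$.

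Substituting these ingredients yields, in the even case, a sum over $(n_1,n_2,k)\in I(n)$ with $n_1\geq 2n_2$ of two terms at $diag(I_{n_1},-I_{2n_2},t)$ and $diag(-I_{n_1},I_{2n_2},t)$, together with a single term at $t\in(\BC^1)^{n/2}$; in the odd case, a single sum over $(n_1,n_2,k)\in I(n)$ evaluated at $diag(I_{n_1},-I_{2n_2},t)$. The final step is the observation that for $n$ even the block permutation swapping the $-I_{n_1}$ block past the $I_{2n_2}$ block lies in $\SO_n(\BR)$, because its sign is $(-1)^{n_1\cdot 2n_2}=1$; hence $diag(-I_{n_1},I_{2n_2},t)$ is $\SO_n(\BR)$-conjugate to $diag(I_{2n_2},-I_{n_1},t)$, and reindexing this contribution by $(n_1',n_2',k)=(2n_2,n_1/2,k)$ merges the $n_1>2n_2$ and $n_1<2n_2$ halves into a single sum over all of $I(n)$, with the diagonal $n_1=2n_2$ term and the symmetric point $n_1=n_2=0$ folding in through the extra factors of $2$ already identified.

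The main obstacle is the bookkeeping in the middle step: one must verify that the several factors of $2$ arising from $|Z_H(T)(\BR):H_T(\BR)|$, from the number of components of $T_H(\BR)$, and from the three values of $|W(H,T)|$ cancel consistently across all edge cases so that the denominator $2^{n-k-1}k!$ emerges uniformly. Once this cancellation is checked, the reindexing by the block swap makes the even and odd cases collapse into the single stated formula, and the proposition follows.
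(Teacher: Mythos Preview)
Your proposal is correct and follows essentially the same approach as the paper: substitute the classification of $\CT(G,H)$ and the constants from Lemmas \ref{GL(n) SO(n) lemma 1} and \ref{GL(n) SO(n) lemma 2} into the definition of $m_{geom}$, then use the $\SO_n(\BR)$-conjugacy between $diag(-I_{n_1},I_{2n_2},t)$ and $diag(I_{2n_2},-I_{n_1},t)$ to reindex the even-$n$ sum over all of $I(n)$. One small caution: your third paragraph overstates the uniformity, since for $n_1=2n_2\neq 0$ the raw denominator is $2^{n-k}k!$ rather than $2^{n-k-1}k!$, and it is only after the reindexing (where the two identical components at $diag(I_{n_1},-I_{n_1},t)$ combine) that the uniform coefficient emerges --- but you correctly flag this bookkeeping as the main point to verify.
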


\subsection{A reduction}
Given a finite length smooth representation $\pi$ of $\GL_n(\BR)$ and a finite dimensional representation $\omega$ of $\SO_n(\BR)$, we need to prove the multiplicity formula
\begin{equation}\label{3.1}
m(\pi,\omega)=m_{geom}(\pi,\omega)
\end{equation}
where $m_{geom}(\pi,\omega)$ was defined in Proposition \ref{geom multiplicity GL(n) SO(n)}.

In order to prove \eqref{3.1}, we need a multiplicity formula for the model $(\GL_n(\BR),\mathrm{O}_n(\BR))$. To be specific, let $\omega_+$ be a finite dimensional representation of $\mathrm{O}_n(\BR)=\{g\in \GL_n(\BR)|\; gg^t=I_n\}$, $\omega_{+}^{\vee}$ be the dual representation, and $\theta_{\omega_{+}^{\vee}}:\mathrm{O}_n(\BR)\rightarrow \BC$ be the character of $\omega_{+}^{\vee}$. We use $sgn:\; \mathrm{O}_n(\BR)\rightarrow \{\pm1\}$ to denote the sign character of $\mathrm{O}_n(\BR)$. Given a finite length smooth representation $\pi$ of $\GL_n(\BR)$, we define the multiplicity
$$m(\pi,\omega_+)=\dim(\Hom_{\mathrm{O}_n(\BR)}(\pi,\omega_+)),$$
and the geometric multiplicity
\begin{eqnarray}\label{3.3}
m_{geom}(\pi,\omega_+)&=&\sum_{(n_1,n_2,k)\in J(n)} \frac{1}{2^{n-k} k!} \int_{(\BC^1)^k} D^{\SO_n}(diag(I_{n_1},-I_{n_2},t)) \nonumber \\ &&c_{\pi}(diag(I_{n_1},-I_{n_2},t)) \theta_{\omega_{+}^{\vee}}(diag(I_{n_1},-I_{n_2},t)) dt
\end{eqnarray}
where $J(n)=\{(n_1,n_2,k)\in (\BZ_{\geq 0})^3|\; n_1+n_2+2k=n\}$.

\begin{rmk}
Here we extend the Weyl determinant $D^{\SO_n}(\cdot)$ from $\SO_n(\BR)$ to $\mathrm{O}_n(\BR)$ by the same formula, i.e. for $x\in \mathrm{O}_n(\BR)_{ss}$, we define
$$D^{\SO_n}(x)=|\det(1-Ad(x))|_{\Fs\Fo_n(\BR)/\Fs\Fo_n(\BR)_x}|$$
where $\Fs\Fo_n(\BR)_x$ is the centralizer of $x$ in $\Fs\Fo_n(\BR)$.
\end{rmk}

\begin{rmk}
The reason we consider the model $(\GL_n(\BR),\mathrm{O}_n(\BR))$ is that it behaviors nicely under parabolic induction. To be specific, the intersection of $\mathrm{O}_n(\BR)$ with the standard Levi subgroup $\GL_{n'}(\BR)\times \GL_{n''}(\BR)$ ($n=n'+n''$) of $\GL_n(\BR)$ is $\mathrm{O}_{n'}(\BR)\times \mathrm{O}_{n''}(\BR)$, while intersection of $\SO_n(\BR)$ with $\GL_{n'}(\BR)\times \GL_{n''}(\BR)$ is $\mathrm{S}(\mathrm{O}_{n'}(\BR)\times \mathrm{O}_{n''}(\BR))$.
\end{rmk}

\begin{prop}\label{O implies SO}
Let $\omega_+$ be a finite dimensional representation of $\mathrm{O}_n(\BR)$ and $\omega=\omega_+|_{\SO_n(\BR)}$ which is a finite dimensional representation of $\SO_n(\BR)$. For all finite length smooth representations $\pi$ of $\GL_n(\BR)$, we have
$$m(\pi,\omega)=m(\pi,\omega_+)+m(\pi,\omega_+\otimes sgn),\; m_{geom}(\pi,\omega)=m_{geom}(\pi,\omega_+)+m_{geom}(\pi,\omega_+\otimes sgn).$$
\end{prop}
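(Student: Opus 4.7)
The plan is to handle the two identities independently, using a Frobenius-reciprocity argument for the multiplicity side and a direct computation with the explicit formula on the geometric side.

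For the multiplicity identity, I would begin from the fact that $\SO_n(\BR)$ has index $2$ in $\mathrm{O}_n(\BR)$ and that $\omega$ admits the chosen extension $\omega_+$ to $\mathrm{O}_n(\BR)$. This gives the decomposition
$$\Ind_{\SO_n(\BR)}^{\mathrm{O}_n(\BR)}\omega \;\cong\; \omega_+ \,\oplus\, (\omega_+ \otimes \mathrm{sgn}),$$
which one checks by comparing dimensions and restricting both sides back to $\SO_n(\BR)$, where each summand becomes $\omega$. Since $\mathrm{O}_n(\BR)$ is compact, Frobenius reciprocity applies cleanly to the Casselman--Wallach representation $\pi\vert_{\mathrm{O}_n(\BR)}$, and yields
$$\Hom_{\SO_n(\BR)}(\pi,\omega) \,\cong\, \Hom_{\mathrm{O}_n(\BR)}(\pi,\omega_+) \,\oplus\, \Hom_{\mathrm{O}_n(\BR)}(\pi,\omega_+ \otimes \mathrm{sgn}).$$
Taking dimensions delivers $m(\pi,\omega) = m(\pi,\omega_+) + m(\pi,\omega_+\otimes\mathrm{sgn})$.

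For the geometric identity I would substitute into the explicit formulas. The only $\omega_+$-dependent factor in \eqref{3.3} is $\theta_{\omega_+^\vee}$, and twisting by $\mathrm{sgn}$ replaces this by $\mathrm{sgn}\cdot\theta_{\omega_+^\vee}$. The key sign computation is
$$\mathrm{sgn}\bigl(\mathrm{diag}(I_{n_1},-I_{n_2},t)\bigr) = (-1)^{n_2},$$
because $t \in (\BC^1)^k$ lies in $\SO_{2k}(\BR)$ and so contributes determinant $1$. Adding the two geometric multiplicities then introduces the factor $1+(-1)^{n_2}$ inside the sum over $J(n)$: odd-$n_2$ terms vanish, while even-$n_2$ terms acquire an extra factor of $2$, which converts $\tfrac{1}{2^{n-k}k!}$ into $\tfrac{1}{2^{n-k-1}k!}$. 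Reindexing $n_2 = 2n_2'$ rewrites the $(n_1,n_2,k)\in J(n)$ with $n_2$ even as $(n_1,n_2',k)\in I(n)$, and on these surviving elements $\mathrm{diag}(I_{n_1},-I_{2n_2'},t)$ actually lies in $\SO_n(\BR)$, so $\theta_{\omega_+^\vee}$ agrees there with $\theta_{\omega^\vee}$. Comparing with Proposition~\ref{geom multiplicity GL(n) SO(n)} identifies the total with $m_{geom}(\pi,\omega)$.

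This proposition is essentially bookkeeping, so there is no deep obstacle; the only points requiring care are (i) verifying the induction decomposition of $\omega$ from $\SO_n(\BR)$ to $\mathrm{O}_n(\BR)$ when $\omega$ extends, and (ii) tracking the combinatorial passage from $J(n)$ to $I(n)$ after the sign-cancellation. Both are transparent once the sign $(-1)^{n_2}$ is computed correctly. The value of the proposition is that the multiplicity formula for $(\GL_n(\BR),\SO_n(\BR))$ is reduced to the same formula for $(\GL_n(\BR),\mathrm{O}_n(\BR))$, which behaves more functorially under standard parabolic induction in $\GL_n$ --- the crucial feature that will be exploited in the induction proof of Theorem~\ref{main K-type}(1).
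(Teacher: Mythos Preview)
Your proposal is correct. For the geometric identity, you and the paper do exactly the same thing: the paper states in one line that it ``follows from the definitions together with $\theta_{\omega_+^\vee\otimes\mathrm{sgn}}=\mathrm{sgn}\cdot\theta_{\omega_+^\vee}$,'' while you spell out the sign $(-1)^{n_2}$ and the reindexing $J(n)\to I(n)$ explicitly.

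For the multiplicity identity, your Frobenius-reciprocity argument and the paper's argument are two packagings of the same mechanism. The paper writes down the explicit map
\[
\Hom_{\mathrm{O}_n}(\pi,\omega_+)\oplus\Hom_{\mathrm{O}_n}(\pi,\omega_+\otimes\mathrm{sgn})\to\Hom_{\SO_n}(\pi,\omega),\qquad (l_1,l_2)\mapsto l_1+l_2,
\]
and constructs its inverse by the averaging projectors $l\mapsto\tfrac12\bigl(l\pm\omega_+(\varepsilon)^{-1}\circ l\circ\pi(\varepsilon)\bigr)$ with $\varepsilon=\mathrm{diag}(-1,I_{n-1})$. This is precisely the content of Frobenius reciprocity for the index-$2$ inclusion, made explicit. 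Your version is a bit slicker; the paper's has the minor advantage of not requiring a separate verification of $\Ind_{\SO_n}^{\mathrm{O}_n}\omega\cong\omega_+\oplus(\omega_+\otimes\mathrm{sgn})$ (which, while standard, does use that $\omega=\omega_+|_{\SO_n}$ has $\varepsilon$-invariant character).
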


\begin{proof}
The second equation follows from the definitions of $m_{geom}(\pi,\omega)$ and $m_{geom}(\pi,\omega_+)$, together with the fact that $\theta_{\omega_{+}^{\vee}\otimes sgn}(h)=\theta_{\omega_{+}^{\vee}}(h)sgn(h)$ for all $h\in \mathrm{O}_n(\BR)$.

For the first equation, we just need to show that the linear map
$$\Hom_{\mathrm{O}_n(\BR)}(\pi,\omega_+)\oplus \Hom_{\mathrm{O}_n(\BR)}(\pi,\omega_+\otimes sgn)\rightarrow \Hom_{\SO_n(\BR)}(\pi,\omega):\; l_1\oplus l_2\mapsto l_1+l_2$$
is an isomorphism. It is clear that this map is injective, so we just need to show that it is surjective. Given $l\in \Hom_{\SO_n(\BR)}(\pi,\omega)$, we have $l=\frac{l_1+l_2}{2}$ where
$$l_1=l+\omega_+(\varepsilon)^{-1}\circ l\circ \pi(\varepsilon),\;l_2=l-\omega_+(\varepsilon)^{-1}\circ l\circ \pi(\varepsilon),\; \varepsilon=diag(-1,I_{n-1})\in \mathrm{O}_n(\BR)-\SO_n(\BR).$$
It is enough to show that
$$l_1\in \Hom_{\mathrm{O}_n(\BR)}(\pi,\omega_+),\; l_2\in \Hom_{\mathrm{O}_n(\BR)}(\pi,\omega_+\otimes sgn).$$

For $v\in \pi$ and $h\in \SO_n(\BR)$, we have
$$l_1(\pi(h)v)=l(\pi(h)v) + \omega_+(\varepsilon)^{-1}\big(l(\pi(\varepsilon h )v) \big) =\omega(h)l(v)+ \omega_+(\varepsilon)^{-1} \big(l(\pi(\varepsilon h \varepsilon^{-1}) \pi(\varepsilon) v) \big)$$
$$=\omega(h)l(v)+ \omega_+(\varepsilon)^{-1} \big(\omega(\varepsilon h \varepsilon^{-1}) l( \pi(\varepsilon) v) \big)=\omega(h)l(v)+\omega(h) \omega_+(\varepsilon)^{-1} l(\pi(\varepsilon)v)=\omega(h)l_1(v)$$
and
$$l_1(\pi(\varepsilon)v)=l(\pi(\varepsilon)v)+\omega_+(\varepsilon)^{-1}\big(l(\pi(\varepsilon^2)v) \big)=l(\pi(\varepsilon)v)+ \omega_+(\varepsilon)^{-1}\big (\omega(\varepsilon^2) l(v) \big)$$
$$=l(\pi(\varepsilon)v)+\omega_+(\varepsilon) l(v) =\omega_+(\varepsilon) l_1(v).$$
This implies that $l_1\in \Hom_{\mathrm{O}_n(\BR)}(\pi,\omega_+)$. Similarly, we can also show that $l_2\in \Hom_{\mathrm{O}_n(\BR)}(\pi, \omega_+\otimes sgn)$. This proves the proposition.
\end{proof}

The following theorem will be proved in the next subsection. It gives a multiplicity formula for the model $(\GL_n(\BR),\mathrm{O}_n(\BR))$.

\begin{thm}\label{main O(n)}
For all finite length smooth representations $\pi$ of $\GL_n(\BR)$ and for all finite dimensional representations $\omega_+$ of $\mathrm{O}_n(\BR)$, we have
\begin{equation}\label{3.2}
m(\pi,\omega_+)=m_{geom}(\pi,\omega_+).
\end{equation}
\end{thm}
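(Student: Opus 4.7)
The plan is to induct on $n$, treating $n \in \{1, 2\}$ as base cases and invoking Proposition~\ref{GL(n) induction} for $n > 2$. A key preliminary observation is that both $\pi \mapsto m(\pi, \omega_+)$ and $\pi \mapsto m_{geom}(\pi, \omega_+)$ descend to additive functionals on $\CR(\GL_n)$: the former because $\mathrm{O}_n(\BR)$ is compact (so $m = \text{EP}$, which is additive on short exact sequences), the latter by linearity in $\theta_\pi$. For $n = 1$ the identity reduces to the finite-group character formula \eqref{finite group case} applied to $\mathrm{O}_1(\BR) = \{\pm 1\}$. For $n = 2$ the only classes in $\CR(\GL_2)$ not already induced from a Borel subgroup are the (essentially) discrete series, and these must be checked directly using the known $\mathrm{O}_2(\BR)$-type decompositions together with Harish-Chandra's character formulas on the split and elliptic tori of $\GL_2(\BR)$.

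For the inductive step $n > 2$, Proposition~\ref{GL(n) induction} gives $\CR(\GL_n) = \CR(\GL_n)_{ind}$, so it suffices to verify the identity when $\pi = I_P^G(\tau)$ with $P = MN$ a proper parabolic. By iteration we may assume $M = \GL_{n_1} \times \GL_{n_2}$ and $\tau = \tau_1 \boxtimes \tau_2$, with $\tau_1, \tau_2$ satisfying the theorem by induction. On the multiplicity side, the Iwasawa decomposition $\GL_n(\BR) = P(\BR)\mathrm{O}_n(\BR)$ together with the identity $P(\BR) \cap \mathrm{O}_n(\BR) = \mathrm{O}_{n_1}(\BR) \times \mathrm{O}_{n_2}(\BR)$ gives $\pi|_{\mathrm{O}_n(\BR)} = \Ind_{\mathrm{O}_{n_1} \times \mathrm{O}_{n_2}}^{\mathrm{O}_n}(\tau_1 \boxtimes \tau_2)$. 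Frobenius reciprocity combined with the branching $\omega_+|_{\mathrm{O}_{n_1} \times \mathrm{O}_{n_2}} = \bigoplus_\alpha \omega_+^{(1,\alpha)} \boxtimes \omega_+^{(2,\alpha)}$ then yields
$$m(\pi, \omega_+) = \sum_\alpha m(\tau_1, \omega_+^{(1,\alpha)}) \cdot m(\tau_2, \omega_+^{(2,\alpha)}).$$

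On the geometric side, Proposition~\ref{germ parabolic induction} specializes for $\GL_n$ (where the index factors $|Z_G(x) : G_x|$ all equal $1$) to $D^G(x)^{1/2} c_\pi(x) = \sum_{y \in \CX_M(x)} D^M(y)^{1/2} c_\tau(y)$, which in particular vanishes unless $x$ is $G(\BR)$-conjugate into $M(\BR)$. Substituting this into the explicit formula of Proposition~\ref{geom multiplicity GL(n) SO(n)} adapted to $\mathrm{O}_n$ as in \eqref{3.3}, each torus $T_{n_1', n_2', k}(\BR)$ has its diagonal blocks $I_{n_1'}$, $-I_{n_2'}$, and $(\BC^1)^k$ distributed between the two Levi factors $\mathrm{O}_{n_1}$ and $\mathrm{O}_{n_2}$; the integral factorizes accordingly, the character $\theta_{\omega_+^\vee}$ factors through the branching above, and the normalization $\frac{1}{2^{n-k-1} k!}$ splits into the corresponding constants on each side times a multinomial that cancels against the number of allocations. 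Assembling the pieces gives precisely $\sum_\alpha m_{geom}(\tau_1, \omega_+^{(1,\alpha)}) \cdot m_{geom}(\tau_2, \omega_+^{(2,\alpha)})$, closing the induction.

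The main obstacle is the combinatorial bookkeeping in the geometric step: one must track how the Weyl measures and normalization constants on $T_{n_1', n_2', k}$ distribute across the Levi decomposition, and match this against the branching of $\omega_+$ from $\mathrm{O}_n$ to $\mathrm{O}_{n_1} \times \mathrm{O}_{n_2}$. The cleanest organization is to invoke the Weyl integration formula on $\mathrm{O}_n(\BR)$ so that the Jacobian $D^{\SO_n}$ absorbs the $2^{n-k-1} k!$ factors, reducing the claim to a single statement about how semisimple classes in $\mathrm{O}_n(\BR)$ meet the Levi. A secondary technical point is the hands-on treatment of the $n = 2$ discrete series, which lie outside the purely inductive scheme.
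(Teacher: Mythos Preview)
Your proposal is correct and follows essentially the same route as the paper: induction on $n$ with base cases $n\le 2$, Proposition~\ref{GL(n) induction} for $n>2$, Iwasawa plus Frobenius reciprocity on the multiplicity side, and Proposition~\ref{germ parabolic induction} on the geometric side.

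The one substantive difference is your handling of the discrete series for $n=2$. You propose to verify these directly via the explicit $\mathrm{O}_2(\BR)$-type decomposition and the Harish-Chandra character on both tori. The paper instead observes that a discrete series $\pi$ sits in an exact sequence $0\to\pi\to\Pi\to\pi'\to 0$ with $\Pi$ a principal series and $\pi'$ finite-dimensional, so that in the Grothendieck group $[\pi]=[\Pi]-[\pi']$; since both $\Pi$ and $\pi'$ fall into already-settled cases (induced, resp.\ finite-dimensional), additivity of both sides finishes the job with no explicit computation. Your direct approach would work but is more laborious; the paper's is cleaner and worth adopting. A small slip: the normalization constant for the $\mathrm{O}_n$ formula \eqref{3.3} is $\frac{1}{2^{n-k}k!}$, not $\frac{1}{2^{n-k-1}k!}$ (the latter is the $\SO_n$ constant from Proposition~\ref{geom multiplicity GL(n) SO(n)}).
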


Now we are ready to prove \eqref{3.1}. It is enough to consider the case when $\omega$ is irreducible. We use $\omega'$ to denote the irreducible representation of $\SO_n(\BR)$ given by $\omega'(h)=\omega(\varepsilon^{-1} h \varepsilon)$ with $\varepsilon=diag(-1,I_{n-1})$. If $\omega\simeq \omega'$, there exists an irreducible representation $\omega_+$ of $\mathrm{O}_n(\BR)$ such that $\omega=\omega_+|_{\SO_n(\BR)}$. Then \eqref{3.1} follows from Proposition \ref{O implies SO} and Theorem \ref{main O(n)}.

If $\omega$ is not isomorphic to $\omega'$ (this only happens when $n$ is even), then there exists an irreducible representation $\omega_+$ of $\mathrm{O}_n(\BR)$ such that $\omega\oplus \omega'=\omega_+|_{\SO_n(\BR)}$. By Proposition \ref{O implies SO} and Theorem \ref{main O(n)}, we have
$$m(\pi,\omega)+m(\pi,\omega')=m_{geom}(\pi,\omega)+m_{geom}(\pi,\omega').$$
Hence in order to prove \eqref{3.1}, it is enough to show that
$$m(\pi,\omega)=m(\pi,\omega'),\; m_{geom}(\pi,\omega)=m_{geom}(\pi,\omega').$$
The first equation follows from the fact that the linear map
$$\Hom_{\SO_n(\BR)}(\pi,\omega)\rightarrow \Hom_{\SO_n(\BR)}(\pi,\omega'):\; l\mapsto \omega_+(\varepsilon)^{-1}\circ l$$
is an isomorphism. The second equation follows from the facts that $\theta_{\omega^{\vee}}(h)=\theta_{(\omega')^{\vee}}(\varepsilon^{-1} h \varepsilon)$ for all $h\in \SO_n(\BR)$ and $\theta_{\pi}$ is invariant under $\varepsilon$-conjugation. This finishes the proof of \eqref{3.1} and hence the proof of Theorem \ref{main K-type}(1).

\subsection{The proof of Theorem \ref{main O(n)}}
In this subsection, we are going to prove Theorem \ref{main O(n)}. To simplify the notation, we will replace $\omega_+$ by $\omega$. We first consider the cases when $n=2$ (the case when $n=1$ is trivial). We need to show that for all smooth finite length representations $\pi$ of $\GL_2(\BR)$ and for all finite dimensional representations $\omega$ of $\mathrm{O}_2(\BR)$, we have
\begin{eqnarray}\label{GL(2)}
m(\pi,\omega)=m_{geom}(\pi,\omega)&:=&\frac{c_{\pi}(I_2)\theta_{\omega}(I_2)+c_{\pi}(-I_2)\theta_{\omega^{\vee}}(-I_2)+2\theta_{\pi}
(\begin{pmatrix}1&0\\0&-1 \end{pmatrix})  \theta_{\omega^{\vee}}(\begin{pmatrix} 1 &0\\ 0&-1 \end{pmatrix})}{4}  \nonumber \\
&& + \frac{1}{2}\int_{\SO_2(\BR)} \theta_{\pi}(t) \theta_{\omega^{\vee}}(t) dt.
\end{eqnarray}

When $\pi$ is finite dimensional, by the representation theory of compact groups, we have
$$m(\pi,\omega)=\int_{\mathrm{O}_2(\BR)} \theta_{\pi}(t)\theta_{\omega^{\vee}}(t)dt=\frac{\theta_{\pi}(\begin{pmatrix}1&0\\0&-1 \end{pmatrix})  \theta_{\omega^{\vee}}(\begin{pmatrix} 1 &0\\ 0&-1 \end{pmatrix})}{2}+\frac{1}{2}\int_{\SO_2(\BR)} \theta_{\pi}(t) \theta_{\omega^{\vee}}(t) dt.$$
Here the Haar measure on $\mathrm{O}_2(\BR)$ (resp. $\SO_n(\BR)$) is choosen so that the total volume is equal to 1. On the other hand, since $\pi$ is finite dimensional, we have $c_{\pi}(I_2)=c_{\pi}(-I_2)=0$. This proves \eqref{GL(2)}.

Then we consider the induced representations. Assume that $\pi=I_{B}^{\GL_2}(\pi_1\otimes \pi_2)$ where $B=TN$ is the upper triangular Borel subgroup of $\GL_2(\BR)$ and $\pi_1\otimes \pi_2$ is a finite dimensional representation of $T(\BR)=\GL_1(\BR)\times \GL_1(\BR)$. By the Iwasawa decomposition $\GL_2(\BR)=B(\BR)\mathrm{O}_2(\BR)$ and the reciprocity law, we have
$$\Hom_{\mathrm{O}_2(\BR)}(\pi,\omega)=\Hom_{\mathrm{O}_1(\BR)\times \mathrm{O}_1(\BR)}(\pi_1\otimes \pi_2,\omega|_{\mathrm{O}_1(\BR)\times \mathrm{O}_1(\BR)}).$$
By the representation theory of finite group (note that $\mathrm{O}_1(\BR)=\BZ/2\BZ$ is a finite group), we have
\begin{eqnarray*}
m(\pi,\omega)&=&\frac{\theta_{\pi_1}(1)\theta_{\pi_2}(1)\theta_{\omega^{\vee}}(I_2)}{4} + \frac{\theta_{\pi_1}(-1)\theta_{\pi_2}(-1)\theta_{\omega^{\vee}}(-I_2)}{4}+\\
&& \frac{\theta_{\pi_1}(1) \theta_{\pi_2}(-1) \theta_{\omega^{\vee}}(\begin{pmatrix}1&0\\0&-1 \end{pmatrix})}{4}+ \frac{\theta_{\pi_1}(-1)\theta_{\pi_2}(1) \theta_{\omega^{\vee}}(\begin{pmatrix}-1&0\\0&1 \end{pmatrix})}{4}.
\end{eqnarray*}
On the other hand, by Proposition \ref{germ parabolic induction}, we have
\begin{eqnarray*}
m_{geom}(\pi,\omega)&=&\frac{\theta_{\pi_1}(1)\theta_{\pi_2}(1)\theta_{\omega^{\vee}}(I_2)}{4} + \frac{\theta_{\pi_1}(-1)\theta_{\pi_2}(-1)\theta_{\omega^{\vee}}(-I_2)}{4}+\\
&& \frac{\theta_{\pi_1}(1) \theta_{\pi_2}(-1) \theta_{\omega^{\vee}}(\begin{pmatrix}1&0\\0&-1 \end{pmatrix})}{4}+ \frac{\theta_{\pi_1}(-1)\theta_{\pi_2}(1) \theta_{\omega^{\vee}}(\begin{pmatrix}-1&0\\0&1 \end{pmatrix})}{4}.
\end{eqnarray*}
This proves \eqref{GL(2)}.

Now we can prove \eqref{GL(2)} for the general case. It is enough to consider the case when $\pi$ is irreducible. There are three kinds of irreducible smooth representation of $\GL_2(\BR)$: finite dimensional representation, principal series and discrete series. The first two cases have already been considered, so it remains to consider the discrete series case. Assume that $\pi$ is an irreducible discrete series. Then there exists a character $\chi_1\otimes \chi_2$ of $T(\BR)=\GL_1(\BR)\times \GL_1(\BR)$ such that $\pi$ is the unique subrepresentation of $\Pi=I_{B}^{\GL_2}(\chi_1\otimes \chi_2)$ and $\pi'=\Pi/\pi$ is a finite dimensional representation of $\GL_2(\BR)$. We have
$$m(\Pi,\omega)=m(\pi,\omega)+m(\pi',\omega),\; m_{geom}(\Pi,\omega)=m_{geom}(\pi,\omega)+m_{geom}(\pi',\omega).$$
By the discussion above, we have $m(\Pi,\omega)=m_{geom}(\Pi,\omega)$ and $m(\pi',\omega)=m_{geom}(\pi',\omega)$. Hence \eqref{GL(2)} also holds for discrete series. This proves Theorem \ref{main O(n)} when $n=2$.

Now assume that $n>2$, we are going to prove Theorem \ref{main O(n)} for $\GL_n(\BR)$. By induction, we assume that Theorem \ref{main O(n)} holds for $\GL_k(\BR)$ when $k<n$. By Proposition \ref{GL(n) induction}, in order to prove Theorem \ref{main O(n)}, it is enough to prove the following proposition.

\begin{prop}
Theorem \ref{main O(n)} holds for all induced representations. In other words, if $\pi=I_{P}^{\GL_n}(\tau)$ is an induced representation with $P=MN$ be a proper parabolic subgroup of $\GL_n$ and $\tau$ be a finite length smooth representation of $M(\BR)$, then $m(\pi,\omega)=m_{geom}(\pi,\omega)$ for all smooth finite dimensional representations $\omega$ of $\mathrm{O}_n(\BR)$.
\end{prop}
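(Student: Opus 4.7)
The plan is to reduce both $m(\pi,\omega)$ and $m_{geom}(\pi,\omega)$ to multiplicities for the pairs $(\GL_{m_i}(\BR),\mathrm{O}_{m_i}(\BR))$ associated to the Levi factors of $M$, each of which has $m_i<n$, and then invoke the inductive hypothesis. By additivity of both sides in $\tau$, we may assume $\tau=\tau_1\boxtimes\cdots\boxtimes\tau_r$ is irreducible with $M=\GL_{m_1}\times\cdots\times\GL_{m_r}$ a proper standard Levi. The base cases $n=1$ and $n=2$ are already handled in the text.

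For the spectral side, we use the Iwasawa decomposition $\GL_n(\BR)=P(\BR)\mathrm{O}_n(\BR)$ together with the identification $P(\BR)\cap\mathrm{O}_n(\BR)=M(\BR)\cap\mathrm{O}_n(\BR)=\prod_i\mathrm{O}_{m_i}(\BR)$, which exhibits $\pi|_{\mathrm{O}_n(\BR)}$ as the compact induction of $\tau|_{\prod_i\mathrm{O}_{m_i}(\BR)}$. By Frobenius reciprocity and the decomposition $\omega|_{\prod_i\mathrm{O}_{m_i}(\BR)}=\bigoplus_j\boxtimes_i\omega_{i,j}$ into irreducibles, we obtain
\[
m(\pi,\omega)=\sum_j\prod_i m(\tau_i,\omega_{i,j})=\sum_j\prod_i m_{geom}(\tau_i,\omega_{i,j}),
\]
with the last equality from the inductive hypothesis.

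For the geometric side, we apply Proposition \ref{germ parabolic induction} (and the subsequent remark for $\GL_n$) at $x=\diag(I_{n_1},-I_{n_2},t)$, $t\in(\BC^1)^k$:
\[
D^{\GL_n}(x)^{1/2}\,c_\pi(x)=\sum_{y\in\CX_M(x)}D^M(y)^{1/2}\,c_\tau(y).
\]
The set $\CX_M(x)$ is explicitly parameterized by tuples $((n_1^{(i)},n_2^{(i)},k^{(i)}))_i\in\prod_i J(m_i)$ with $\sum_i n_1^{(i)}=n_1$, $\sum_i n_2^{(i)}=n_2$ and a way to distribute the $k$ complex-conjugate eigenvalue pairs of $t$ among the $r$ blocks, each representative being $y=\diag(y_1,\dots,y_r)$ with $y_i=\diag(I_{n_1^{(i)}},-I_{n_2^{(i)}},t^{(i)})$. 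Substituting this into the formula of Proposition \ref{geom multiplicity GL(n) SO(n)}, one uses (a) the factorizations $\theta_{\omega^\vee}(x)=\sum_j\prod_i\theta_{\omega_{i,j}^\vee}(y_i)$ (since characters are class functions and $y_i$ is $\GL_{m_i}(\BR)$-conjugate to the diagonal block of $x$); (b) the factorization of the Weyl determinant $D^{\SO_n}(x)/D^{\GL_n}(x)^{1/2}=\prod_i D^{\SO_{m_i}}(y_i)/D^{\GL_{m_i}}(y_i)^{1/2}$, which follows because the ``off-diagonal'' contributions to $\Fs\Fo_n$ at $x$ exactly cancel those to $\Fn\oplus\bar{\Fn}$ in $\Fg\Fl_n$ after taking the $1/2$-power; and (c) the unfolding $\int_{(\BC^1)^k}(\cdots)=\sum\binom{k}{k^{(1)},\ldots,k^{(r)}}\prod_i\int_{(\BC^1)^{k^{(i)}}}(\cdots)$, whose multinomial coefficient combines with the combinatorial enumeration of the splittings of $(n_1,n_2)$ to match the normalizing factor $\frac{1}{2^{n-k}k!}$ against $\prod_i\frac{1}{2^{m_i-k^{(i)}}(k^{(i)})!}$.

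Assembling (a)--(c) and using $J(n)=\bigsqcup\prod_i J(m_i)$ (under the decomposition above), the expression for $m_{geom}(\pi,\omega)$ reorganizes into $\sum_j\prod_i m_{geom}(\tau_i,\omega_{i,j})$, which equals $m(\pi,\omega)$ by the first paragraph. The main obstacle is the careful bookkeeping in step (c): matching the combinatorial factor $\frac{1}{2^{n-k}k!}$ against the products of the corresponding factors for the $m_i$'s, and ensuring the Weyl determinant factorization in (b) holds exactly on the support of the geometric multiplicity. Both are essentially explicit identities, so the inductive step goes through and completes the proof of Theorem \ref{main O(n)}.
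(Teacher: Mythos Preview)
Your proposal is correct and follows essentially the same approach as the paper: both sides are reduced to the Levi factors via Iwasawa decomposition plus Frobenius reciprocity (spectral side) and Proposition~\ref{germ parabolic induction} (geometric side), and then matched by the inductive hypothesis and explicit combinatorics. The paper restricts to a maximal parabolic with $M=\GL_{n'}\times\GL_{n''}$ and keeps $\theta_{\omega^\vee}$ undecomposed, whereas you work with a general Levi and decompose $\omega|_{\prod_i\mathrm{O}_{m_i}(\BR)}$ into irreducibles; these are minor variations and your bookkeeping in steps (a)--(c) is exactly what underlies the paper's one-line passage from its equations (8.5) and (8.6) to the conclusion.
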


\begin{proof}
Let $\pi$ be an induced representation of $\GL_n(\BR)$. Then there exists a maximal upper triangular parabolic subgroup $P=MN$ of $\GL_n(\BR)$ and a finite length smooth representation $\tau$ of $M(\BR)$ such that $\pi=I_{P}^{\GL_n}(\tau)$. Since $P$ is maximal, $M(\BR)=\GL_{n'}(\BR)\times \GL_{n''}(\BR)$ for some $n',n''>0$ with $n=n'+n''$ and $\tau=\tau'\otimes \tau''$ where $\tau'$ (resp. $\tau''$) is a finite length smooth representation of $\GL_{n'}(\BR)$ (resp. $\GL_{n''}(\BR)$).

By the Iwasawa decomposition $\GL_n(\BR)=P(\BR)\mathrm{O}_n(\BR)$ and the reciprocity law, we have
$$\Hom_{\mathrm{O}_n(\BR)}(\pi,\omega)\simeq \Hom_{\mathrm{O}_{n'}(\BR)\times \mathrm{O}_{n''}(\BR)}(\tau_1\otimes \tau_2, \omega|_{\mathrm{O}_{n'}(\BR)\times \mathrm{O}_{n''}(\BR)}).$$
Together with the inductional hypothesis (applied to the pairs $(\GL_{n'}(\BR),\mathrm{O}_{n'}(\BR))$ and $(\GL_{n''}(\BR),\mathrm{O}_{n''}(\BR))$), we have
\begin{eqnarray*}
m(\pi,\omega)&=&\sum_{(n_1',n_2',k')\in J(n'),(n_1'',n_2'',k'')\in J(n'')} \frac{1}{2^{n'-k'} k'!} \frac{1}{2^{n''-k''} k''!} \int_{(\BC^1)^{k'}} \int_{(\BC^1)^{k''}}   \\
&& D^{\SO_{n'}}(diag(I_{n_1'},-I_{n_2'},t')) D^{\SO_{n''}}(diag(I_{n_1''},-I_{n_2''},t''))  c_{\pi'}(diag(I_{n_1'},-I_{n_2'},t'))
\end{eqnarray*}
\begin{equation}\label{3.4}
\;\;\;\;\;c_{\pi''}(diag(I_{n_1''},-I_{n_2''},t'')) \theta_{\omega^{\vee}}(diag(I_{n_1'},-I_{n_2'},t',I_{n_1''},-I_{n_2''},t'')) dt'dt''.
\end{equation}
It remains to show that $m_{geom}(\pi,\omega)$ is equal to the right hand side of \eqref{3.4}.

We first recall the definition of $m_{geom}(\pi,\omega)$ from \eqref{3.3}:
\begin{eqnarray}\label{3.5}
m_{geom}(\pi,\omega)&=&\sum_{(n_1,n_2,k)\in J(n)} \frac{1}{2^{n-k} k!} \int_{(\BC^1)^k} D^{\SO_n}(diag(I_{n_1},-I_{n_2},t)) \nonumber \\ &&c_{\pi}(diag(I_{n_1},-I_{n_2},t)) \theta_{\omega^{\vee}}(diag(I_{n_1},-I_{n_2},t)) dt.
\end{eqnarray}
For $(n_1,n_2,k)\in J(n)=\{(n_1,n_2,k)\in (\BZ_{\geq 0})^3|\; n_1+n_2+2k=n\}$, let
\begin{eqnarray*}
I(n_1,n_2,k)&=&\{(n_1',n_1'',n_2',n_2'',k',k'')\in \BZ_{\geq 0}^{6}|\; n_1=n_1'+n_1'',n_2=n_2'+n_2'',k=k'+k'',\\
&&(n_1',n_2',k')\in J(n'),(n_1'',n_2'',k'')\in J(n'')\}.
\end{eqnarray*}
By Proposition \ref{germ parabolic induction}, for $(n_1,n_2,k)\in J(n)$ and $t=t_1\times t_2\times \cdots \times t_k\in (\BC^1)^k$ with $t_i\neq \pm1$, $t_i\neq t_j$ and $t_i\neq \overline{t_j}$ for $1\leq i\neq j\leq n$, we have
\begin{equation}\label{3.6}
D^{\SO_n}(diag(I_{n_1},-I_{n_2},t))c_{\pi}(diag(I_{n_1},-I_{n_2},t))=\sum_{(n_1',n_1'',n_2',n_2'',k',k'')\in I(n_1,n_2,k)}\sum_{\{i_1,\cdots,i_{k'}\},\{j_1,\cdots,j_{k''}\}}
\end{equation}
$$D^{\SO_{n'}}(diag(I_{n_1'},-I_{n_2'},t')) D^{\SO_{n''}}(diag(I_{n_1''},-I_{n_2''},t''))c_{\pi'}(diag(I_{n_1'},-I_{n_2'},t')) c_{\pi''}(diag(I_{n_1''},-I_{n_2''},t''))$$
where
\begin{itemize}
\item $i_1<i_2<\cdots <i_{k'},\; j_1<j_2<\cdots<j_{k''}$. $\{i_1,\cdots,i_{k'}\}$ runs over the subsets of $\{1,2,\cdots,k\}$ containing $k'$-many elements and $\{j_1,\cdots,j_{k''}\}=\{1,2,\cdots,k\}-\{i_1,\cdots,i_{k'}\}$.
\item $t'=t_{i_1}\times t_{i_2}\times \cdots \times t_{i_{k'}}$ and $t''=t_{j_1}\times t_{j_2}\times \cdots \times t_{j_{k''}}$.
\end{itemize}

Combining \eqref{3.4}, \eqref{3.5} and \eqref{3.6}, we have $m(\pi,\omega)=m_{geom}(\pi,\omega)$. This finishes the proof of the proposition and hence the proofs of Theorem \ref{main K-type}(1) and \ref{main O(n)}.
\end{proof}

\section{The proof of Theorem \ref{main K-type}(2)}
In this section, let $H$ be a connected reductive group defined over $\BR$ with $H(\BR)$ compact and let $G=Res_{\BC/\BR}H$. Let $\pi$ be a finite length smooth representation of $G(\BR)$ and $\omega$ be a finite dimensional representation of $H(\BR)$. We have defined the multiplicity
$$m(\pi,\omega)=\dim(\Hom_{H(\BR)}(\pi,\omega))$$
in previous sections. Moreover, by the discussion in Section \ref{section Galois}, we know that the geometric multiplicity in this case is defined by
$$m_{geom}(\pi,\omega)=|W(H,T)|^{-1} \int_{T(\BR)} D^H(t)\theta_{\pi}(t)\theta_{\omega^{\vee}}(t)dt=|W(G)|^{-1} \int_{T(\BR)} D^H(t)\theta_{\pi}(t)\theta_{\omega^{\vee}}(t)dt$$
where $T(\BR)$ is a maximal torus of $H(\BR)$ (which is unique up to $H(\BR)$-conjugation) and $W(H,T)$ is the Weyl group which is isomorphic to the Weyl group $W(G)$ of $G(\BR)=H(\BC)$. The goal of this section is to prove Theorem \ref{main K-type}(2). In other words, we need to show that
\begin{equation}\label{4.1}
m(\pi,\omega)=m_{geom}(\pi,\omega).
\end{equation}

When $G$ is abelian, \eqref{4.1} is trivial. Hence by induction, we may assume that \eqref{4.1} holds for all the proper Levi subgroups of $G$. By Proposition \ref{GL(n) induction}, it is enough to prove the following proposition.

\begin{prop}
\eqref{4.1} holds for all induced representations. In other words, if $\pi=I_{P}^{G}(\tau)$ is an induced representation with $P=MN$ be a proper parabolic subgroup of $G$ and $\tau$ be a finite length smooth representation of $M(\BR)$, then $m(\pi,\omega)=m_{geom}(\pi,\omega)$ for all finite dimensional representations $\omega$ of $H(\BR)$.
\end{prop}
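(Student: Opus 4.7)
The plan is to reduce both sides of the identity $m(\pi,\omega)=m_{geom}(\pi,\omega)$ to the proper Levi pair $(M, K\cap M)$, where $K=H(\BR)$, so that the inductive hypothesis applies, and then to match the two reductions using Proposition \ref{germ parabolic induction} together with a short Weyl-integration argument.

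For the spectral side, since $K=H(\BR)$ is a maximal compact subgroup of $G(\BR)=H(\BC)$, we have the Iwasawa decomposition $G(\BR)=P(\BR)K$ and $K\cap N(\BR)=\{1\}$ (because $N$ is unipotent and $K$ is compact). Restriction to $K$ therefore identifies $I_P^G(\tau)|_K$ with $\Ind_{K\cap M}^K(\tau|_{K\cap M})$, and Frobenius reciprocity for the compact group $K$ gives
$$m(\pi,\omega)=\dim\Hom_{K\cap M}\bigl(\tau|_{K\cap M},\,\omega|_{K\cap M}\bigr)=m\bigl(\tau,\omega|_{K\cap M}\bigr).$$
After conjugating $P$ by an element of $G(\BR)$ if necessary so that the Levi $M=\Res_{\BC/\BR}L$ is stable under the Cartan involution associated with $H$, the intersection $K\cap M$ is the compact real form $L^c$ of the complex Levi $L$, and $(M,K\cap M)=(\Res_{\BC/\BR}L^c,L^c)$ has the same shape as $(G,H)$ but with smaller dimension. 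By induction,
$$m\bigl(\tau,\omega|_{K\cap M}\bigr)=m_{geom}\bigl(\tau,\omega|_{K\cap M}\bigr)=|W(L^c,T)|^{-1}\int_{T(\BR)}D^{L^c}(t)\,\theta_\tau(t)\,\theta_{\omega^\vee}(t)\,dt,$$
where $T$ is a common maximal torus of $H$ and $L^c$.

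It remains to show this equals $m_{geom}(\pi,\omega)=|W(H,T)|^{-1}\int_{T(\BR)}D^H(t)\theta_\pi(t)\theta_{\omega^\vee}(t)\,dt$. Since $G(\BR)=H(\BC)$ and $M(\BR)=L(\BC)$ are connected complex reductive groups, all centralizers of semisimple elements are connected, so the index factors in Proposition \ref{germ parabolic induction} are trivial; for $t\in T(\BR)$ regular in $H$ (hence regular in $G$), the set $\CX_M(t)$ is parametrized by the Weyl coset space $W(L,T)\backslash W(H,T)$. Combined with the elementary Weyl-determinant identities $D^G(t)^{1/2}=D^H(t)$ and $D^M(y)^{1/2}=D^{L^c}(y)$, which follow from the fact that each root contributes a doubled real eigenspace when passing from $\Fh(\BR)$ to $\Fh(\BC)=\Fg(\BR)$, Proposition \ref{germ parabolic induction} yields
$$D^H(t)\,\theta_\pi(t)=\sum_{w\in W(L,T)\backslash W(H,T)} D^{L^c}(wtw^{-1})\,\theta_\tau(wtw^{-1}).$$
Integrating against $\theta_{\omega^\vee}(t)$ over $T(\BR)$ and using that $\theta_{\omega^\vee}$, being the character of a representation of the compact group $K$, is $W(H,T)$-invariant, each Weyl coset contributes the same integral after the change of variable $t\mapsto w^{-1}tw$. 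This produces a factor $[W(H,T):W(L,T)]$, which after dividing by $|W(H,T)|$ gives precisely $|W(L,T)|^{-1}\int_{T(\BR)}D^{L^c}(t)\theta_\tau(t)\theta_{\omega^\vee}(t)\,dt$, matching the target. The only step that requires care is the Weyl-determinant identity $D^G(t)^{1/2}=D^H(t)$ (together with the verification that, up to $G(\BR)$-conjugation of $P$, the complex Levi $L$ may be arranged to contain the fixed compact maximal torus $T\subset H$); both facts are standard and reflect the conjugacy of all maximal tori in a connected complex reductive group.
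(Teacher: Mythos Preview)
Your proposal is correct and follows essentially the same route as the paper: reduce the spectral side via the Iwasawa decomposition and Frobenius reciprocity to the Levi pair $(M,H_M)$ (your $(M,L^c)$), invoke the inductive hypothesis there, and then match the geometric side using Proposition~\ref{germ parabolic induction} together with the identity $D^G(t)^{1/2}=D^H(t)$ and the Weyl-group count $|W(H,T)|/|W(L,T)|$. The only cosmetic differences are notational (the paper writes $H_M$ and $W(M),W(G)$ where you write $L^c$ and $W(L,T),W(H,T)$) and that you make the parametrization of $\CX_M(t)$ by Weyl cosets and the connectedness of centralizers explicit, whereas the paper leaves these implicit.
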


\begin{proof}
By conjugating $M$ we may assume that $P(\BR)\cap H(\BR)=M(\BR)\cap H(\BR)$ is a maximal compact subgroup of $M(\BR)$. Set $H_M=M\cap H$, then $M\simeq Res_{\BC/\BR}H_M$. Moreover, we may choose the torus $T$ so that $T\subset H_M$ (i.e. $T(\BR)$ is also a maximal torus of $H_M(\BR)$). By the Iwasawa decomposition $G(\BR)=P(\BR)H(\BR)$ and the reciprocity law, we have
$$\Hom_{H(\BR)}(\pi,\omega)\simeq \Hom_{H_M(\BR)}(\tau,\omega|_{H_M(\BR)}).$$
Combining with our inductional hypothesis (applied to the pair $(M(\BR),H_M(\BR))$), we have
\begin{equation}\label{4.2}
m(\pi,\omega)=|W(M)|^{-1} \int_{T(\BR)}D^{H_M}(t)\theta_{\tau}(t) \theta_{\omega^{\vee}}(t) dt
\end{equation}
where $W(M)$ is the Weyl group of $M(\BR)=H_M(\BC)$.

For $t\in T(\BR)\cap G_{reg}(\BR)$, we have $D^H(t)=D^G(t)^{1/2}$ and $D^{H_M}(t)=D^M(t)^{1/2}$. Combining with Proposition \ref{germ parabolic induction}, we have
$$D^{H}(t)\theta_{\pi}(t)=\sum_{t_M}  D^{H_M}(t_M) \theta_{\tau}(t_M)$$
where $t_M$ runs over a set of representatives for the $M(\BR)$-conjugacy classes of elements in $T(\BR)$ that are $G(\BR)$-conjugated to $t$. As a result, we have
\begin{equation}\label{4.3}
\int_{T(\BR)} D^H(t)\theta_{\pi}(t)\theta_{\omega^{\vee}}(t)dt= \frac{|W(G)|}{|W(M)|} \int_{T(\BR)} D^{H_M}(t)\theta_{\tau}(t)\theta_{\omega^{\vee}}(t)dt.
\end{equation}
Now the proposition follows from \eqref{4.2} and \eqref{4.3}. This finishes the proof of the proposition and the proof of Theorem \ref{main K-type}(2).
\end{proof}

\end{document}